\newcommand{\g}{\mathfrak{g}}
\newcommand{\h}{\mathfrak{h}}
\newcommand{\p}{\mathfrak{p}}
\newcommand{\uu}{\mathfrak{u}}
\newcommand{\sln}{\mathfrak{sl}}
\newcommand{\ad}{\mathrm{ad}}
\newcommand{\Ad}{\mathrm{Ad}}
\renewcommand{\exp}{\mathrm{exp}}
\numberwithin{equation}{section}
\newtheorem{thm}{Theorem}[section]
\newtheorem{lem}[thm]{Lemma}
\newtheorem{cor}[thm]{Corollary}
\newtheorem{prop}[thm]{Proposition}
\theoremstyle{definition}
\newtheorem{rem}[thm]{Remark}
\newtheorem{ex}[thm]{Example}
\newtheorem{defn}[thm]{Definition}
\newtheorem{obs}[thm]{Observation}
\title[The log symplectic geometry of Poisson slices]{The log symplectic geometry of Poisson slices}
\author[Peter Crooks]{Peter Crooks}
\author[Markus R\"oser]{Markus R\"oser}
\address[Peter Crooks]{Department of Mathematics \\ Northeastern University \\ 360 Huntington Avenue \\ Boston, MA 02115, USA}
\email{p.crooks@northeastern.edu}
\address[Markus R\"oser]{Fachbereich Mathematik\\ Universit\"at Hamburg\\ Bundesstra\ss e 55 (Geomatikum) \\ 20146 Hamburg\\ Germany}
\email{markus.roeser@uni-hamburg.de}
\subjclass{14L30 (primary); 53D20, 14A21 (secondary)}
\keywords{Poisson slice, log symplectic variety, wonderful compactification}
\begin{document}

\date{}

\dedicatory{}

\commby{}

\begin{abstract}
Our paper develops a theory of Poisson slices and a uniform approach to their partial compactifications. The theory in question is loosely comparable to that of symplectic cross-sections in real symplectic geometry.     
\end{abstract}

\maketitle
\tableofcontents

\section{Introduction}
\subsection{Motivation and context}
The Poisson slice construction yields a number of varieties relevant to geometric representation theory and symplectic geometry. One begins with a complex semisimple linear algebraic group with Lie algebra $\g$. Let us also consider a Hamiltonian $G$-variety $X$, i.e. a smooth Poisson variety with a Hamiltonian action of $G$ and moment map $\nu:X\longrightarrow\g$. Each $\mathfrak{sl}_2$-triple $\tau=(\xi,h,\eta)\in\g^{\oplus 3}$ determines a Slodowy slice $$\mathcal{S}_{\tau}:=\xi+\g_{\eta}\subseteq\g,$$ and the preimage $$X_{\tau}:=\nu^{-1}(\mathcal{S}_{\tau})$$ is a Poisson transversal in $X$. The variety $X_{\tau}$ is thereby Poisson, and we call it the \textit{Poisson slice} determined by $X$ and $\tau$. To a certain extent, Poisson slices are complex Poisson-geometric counterparts of symplectic cross-sections \cite{GuilleminPhysics,LermanMeinrenken,GJS,GuilleminLermanSternberg} in real symplectic geometry.

Noteworthy examples of Poisson slices include the product $G\times\mathcal{S}_{\tau}$, a hyperk\"ahler and Hamiltonian $G$-variety studied by Bielawski \cite{Bielawski,BielawskiComplex}, Moore--Tachikawa \cite{MooreTachikawa}, and several others \cite{CrooksBulletin,CrooksRoeser2,CrooksRayan,AbeCrooks,CrooksVP}. A second example is a Coulomb branch \cite{BFN} called the \textit{universal centralizer} $$\mathcal{Z}_{\g}^{\tau}:=\{(g,y)\in G\times\mathcal{S}_\tau:\mathrm{Ad}_{g}(y)=y\},$$
where $\tau$ is a fixed principal $\mathfrak{sl}_2$-triple in $\g$. This hyperk\"ahler variety has received considerable attention in the literature \cite{BFM,BFN,CrooksJGP,Mayrand,Riche,TelemanICM,Teleman}, and it features prominently in B\u{a}libanu's recent paper \cite{BalibanuUniversal}. B\u{a}libanu assumes $G$ to be of adjoint type. She harnesses the geometry of the wonderful compactification $\overline{G}$ and constructs a fibrewise compactification $\overline{\mathcal{Z}_{\g}^{\tau}}\longrightarrow\mathcal{S}_{\tau}$ of $\mathcal{Z}_{\g}^{\tau}\longrightarrow\mathcal{S}_{\tau}$, where the latter map is projection onto the $\mathcal{S}_{\tau}$-factor. She subsequently endows $\overline{\mathcal{Z}_{\g}^{\tau}}$ with a log symplectic structure.  

The preceding discussion gives rise to the following rough questions.
\begin{itemize}
\item Is there a coherent and systematic approach to the partial compactification of Poisson slices that is related to $\overline{G}$ and specializes to yield $\overline{\mathcal{Z}_{\g}^{\tau}}\longrightarrow\mathcal{S}_{\tau}$ as a fibrewise compactification of $\mathcal{Z}_{\g}^{\tau}\longrightarrow\mathcal{S}_{\tau}$?
\item If the previous question has an affirmative answer and $X_{\tau}$ is symplectic, does the partial compactification of $X_{\tau}$ carry a log symplectic structure?      
\end{itemize}
Our inquiry stands to benefit from two observations. One first notes the universal or atomic nature of $G\times\mathcal{S}_{\tau}$ as a Poisson slice, i.e. the existence of a canonical Poisson variety isomorphism
$$X_{\tau}\cong (X\times (G\times\mathcal{S}_{\tau}))\sslash G$$ for each Hamiltonian $G$-variety $X$ and $\mathfrak{sl}_2$-triple $\tau$ in $\g$.  These atomic Poisson slices have counterparts in the theories of symplectic cross-sections \cite{GuilleminPhysics}, symplectic implosion \cite{GJS}, symplectic contraction \cite{Hilgert}, hyperk\"ahler implosion \cite{DancerKirwan,DancerKirwanRoser}, and Kronheimer's hyperk\"ahler quotient with momentum \cite{Kronheimer}. A second observation is that $G\times\mathcal{S}_{\tau}$ sits inside of a larger log symplectic variety $\overline{G\times\mathcal{S}_{\tau}}$ as the unique open dense symplectic leaf; the construction of $\overline{G\times\mathcal{S}_{\tau}}$ assumes $G$ to be of adjoint type and exploits the geometry of $\overline{G}$. 

The preceding considerations motivate us to define
$$\overline{X}_{\tau}:=(X\times (\overline{G\times\mathcal{S}_{\tau}}))\sslash G$$ and conjecture that $\overline{X}_{\tau}$ is the desired partial compactification of $X_{\tau}$. While this naive conjecture needs to be refined and made more precise, it inspires many of the results in our paper.  

\subsection{Summary of Results}

Our paper develops a detailed theory of Poisson slices and addresses the questions posed above. The following is a summary of our results. We work exclusively over $\mathbb{C}$ and take all Poisson varieties to be smooth. We use the Killing form to freely identify $\g^*$ with $\g$, as well as the left trivialization and Killing form to freely identify $T^*G$ with $G\times\mathfrak{g}$.
 
Suppose that $X$ is a Poisson Hamiltonian $G$-variety with moment map $\nu:X\longrightarrow\g$. Let $\tau$ be an $\mathfrak{sl}_2$-triple in $\g$ and consider the Poisson transversal $$X_{\tau}:=\nu^{-1}(\mathcal{S}_{\tau})\subseteq X.$$ The following are some first properties of the Poisson slice $X_{\tau}$. Such properties are well-known in the case of a symplectic variety $X$ (see \cite{Bielawski}).    

\begin{prop}\label{Proposition: Slices}
Let $X$ be a Poisson variety endowed with a Hamiltonian $G$-action and moment map $\nu:X\longrightarrow\g$. Suppose that $\tau=(\xi,h,\eta)$ is an $\mathfrak{sl}_2$-triple in $\g$. The following statements hold.
\begin{itemize}
\item[(i)] The Poisson slice $X_{\tau}$ is transverse to the $G$-orbits in $X$.
\item[(ii)] There are canonical Poisson variety isomorphisms
$$(X\times(G\times\mathcal{S}_{\tau}))\sslash G\cong X_{\tau}\cong X\sslash_\xi U_\tau.$$ The Hamiltonian $G$-variety structure on $G\times\mathcal{S}_{\tau}$ and meaning of the unipotent subgroup $U_{\tau}\subseteq G$ are given in Subsection \ref{Subsection: Poisson slices via Hamiltonian reduction}.
\end{itemize}
\end{prop}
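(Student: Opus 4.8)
For part (i) the plan is to combine two standard ingredients. The first is infinitesimal equivariance of the moment map: for $x\in X$ and $\zeta\in\g$ one has $(d\nu)_x(\zeta_X(x))=[\zeta,\nu(x)]$ (fundamental vector field on the left, Killing identification $\g^*\cong\g$ on the right), so that $(d\nu)_x\big(T_x(G\cdot x)\big)=[\g,\nu(x)]$. The second is Kostant's transversality theorem for Slodowy slices: $[\g,s]+\g_\eta=\g$ for every $s\in\mathcal{S}_\tau$. For $x\in X_\tau$ with $s=\nu(x)$ these give $(d\nu)_x(T_xX)\supseteq[\g,s]$ and $[\g,s]+T_s\mathcal{S}_\tau=\g$, whence $\nu\pitchfork\mathcal{S}_\tau$; in particular $X_\tau$ is a smooth Poisson transversal with $T_xX_\tau=(d\nu)_x^{-1}(\g_\eta)$. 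Transversality of $X_\tau$ to $G\cdot x$ is then a short diagram chase: given $v\in T_xX$, write $(d\nu)_x(v)=[\zeta,s]+w$ with $\zeta\in\g$ and $w\in\g_\eta$; then $v-\zeta_X(x)\in(d\nu)_x^{-1}(\g_\eta)=T_xX_\tau$ while $\zeta_X(x)\in T_x(G\cdot x)$, so $v\in T_x(G\cdot x)+T_xX_\tau$.

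For the isomorphism $X_\tau\cong X\sslash_\xi U_\tau$ I would proceed as follows. With $U_\tau\subseteq G$ and the character $\xi$ of $\uu_\tau$ as fixed in the referenced subsection, write $X\sslash_\xi U_\tau=\nu_{U_\tau}^{-1}(\xi)/U_\tau=\nu^{-1}(\xi+\uu_\tau^{\perp})/U_\tau$, where $\nu_{U_\tau}$ is the composite of $\nu$ with the restriction $\g^*\to\uu_\tau^*$ and $\uu_\tau^{\perp}$ is the Killing-orthogonal complement, chosen so that $\mathcal{S}_\tau\subseteq\xi+\uu_\tau^{\perp}$. The geometric input is Kostant's slice theorem in the form that the adjoint-action map $U_\tau\times\mathcal{S}_\tau\longrightarrow\xi+\uu_\tau^{\perp}$, $(u,s)\mapsto\Ad_u(s)$, is an isomorphism of varieties; thus $\mathcal{S}_\tau$ is a global slice for the free $U_\tau$-action on $\xi+\uu_\tau^{\perp}$. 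Pulling this back along the $U_\tau$-equivariant submersion $\nu$ yields a $U_\tau$-equivariant isomorphism $\nu^{-1}(\xi+\uu_\tau^{\perp})\cong U_\tau\times\nu^{-1}(\mathcal{S}_\tau)=U_\tau\times X_\tau$ on which $U_\tau$ acts by translation in the first factor. This at once shows that the level set is a smooth $U_\tau$-variety on which $U_\tau$ acts freely — so the unipotent quotient is a well-behaved smooth separated variety — and that $X\sslash_\xi U_\tau\cong X_\tau$ as varieties. What remains is the Poisson comparison, which I expect to be the main obstacle: one must check that the bracket carried by $X_\tau$ as a Poisson transversal (the Dirac reduction of $\pi_X$) coincides with the reduced bracket on $\nu^{-1}(\xi+\uu_\tau^{\perp})/U_\tau$. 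The clean way is to observe that both are obtained from $\pi_X$ by a single reduction procedure along a submanifold transverse to the symplectic foliation, so that the two structures have the same symplectic leaves and the same leafwise forms; this can be verified leaf-by-leaf or via a general comparison of Dirac reduction with Hamiltonian reduction.

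For the remaining isomorphism $(X\times(G\times\mathcal{S}_\tau))\sslash G\cong X_\tau$ the plan is to reduce in stages, exploiting the atomic nature of $G\times\mathcal{S}_\tau$. Applying the previous paragraph to $T^*G\cong G\times\g$ with its \emph{right} $G$-action identifies $G\times\mathcal{S}_\tau$ with $(T^*G)\sslash_\xi U_\tau$ (Whittaker reduction by the right $U_\tau$), equipped with its residual left $G$-action. Since this left $G$-action commutes with the right $U_\tau$-action, one may interchange the two reductions:
\[
(X\times(G\times\mathcal{S}_\tau))\sslash G\;\cong\;\Big(\big(X\times(T^*G)\big)\sslash G\Big)\sslash_\xi U_\tau\;\cong\;X\sslash_\xi U_\tau\;\cong\;X_\tau,
\]
where the middle isomorphism uses the canonical identification $\big(X\times(T^*G)\big)\sslash G\cong X$ (the ``unit'' property of $T^*G$), under which the residual right $U_\tau$ becomes the original $U_\tau\subseteq G$ acting on $X$, and the last isomorphism is the one just constructed. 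Equivalently, and more concretely, one checks that $[(p,(g,x))]\mapsto g^{-1}\cdot p$ is a well-defined $G$-invariant isomorphism onto $X_\tau$ with inverse $x_\tau\mapsto[(x_\tau,(e,\nu(x_\tau)))]$; here regularity of $0$ for the diagonal moment map and freeness of $G$ on its zero level set follow from the freeness of left translation on $G\times\mathcal{S}_\tau$ together with Kostant transversality. As with the first isomorphism, the only non-formal point is that the reduced Poisson brackets match under these identifications, which is handled exactly as above.
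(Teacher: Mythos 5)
Your argument for (i) is correct and is essentially an elementary rephrasing of the paper's proof (Proposition \ref{Proposition: PoissonSliceTransverseG}): where you use equivariance of $\nu$ together with Kostant's identity $[\g,s]+\g_\eta=\g$ for $s\in\mathcal{S}_\tau$, the paper uses the Poisson-morphism property of $\nu$ and the transversal decomposition of $\g$ along $\mathcal{S}_\tau$, and both arguments come down to the fact that $P_x\big(d\nu_x^*(\g_\eta^\dagger)\big)$ lies in $T_x(Gx)$. The overall architecture of your (ii) is also the paper's, run in the opposite order: the paper proves $(X\times(G\times\mathcal{S}_\tau))\sslash G\cong X_\tau$ directly (Proposition \ref{Proposition: PoissonSlice}) and deduces the Whittaker-type isomorphism by reduction in stages through $X\times T^*G$ (Proposition \ref{Proposition:PoissonSlice}), whereas you propose to prove the Whittaker-type isomorphism directly and deduce the other by the same stages argument. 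Either order can be made to work, and your explicit map $[(p,(g,x))]\mapsto g^{-1}\cdot p$ with inverse $x\mapsto[(x,(e,\nu(x)))]$ is exactly the paper's.

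There are, however, two genuine gaps in (ii). First, your description of $X\sslash_\xi U_\tau$ as $\nu^{-1}(\xi+\uu_\tau^{\perp})/U_\tau$, resting on a claimed isomorphism $U_\tau\times\mathcal{S}_\tau\cong\xi+\uu_\tau^{\perp}$, fails unless $\tau$ is even: a dimension count gives $\dim U_\tau+\dim\mathcal{S}_\tau=\dim\p_\tau+\dim\g_{-1}$, so the map cannot be an isomorphism when $\g_{-1}\neq 0$; moreover $\xi$ is then not a character of $\uu_\tau$ (the pairing $(a,b)\mapsto\langle\xi,[a,b]\rangle$ on $\g_{-1}$ is nondegenerate), the level set $\nu^{-1}(\xi+\p_\tau)$ is not $U_\tau$-invariant, and the quotient by all of $U_\tau$ does not make sense. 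The paper's convention (Subsection \ref{Subsection: Poisson slices via Hamiltonian reduction}) is to quotient by the coadjoint stabilizer $(U_\tau)_\xi$, with Lie algebra $\bigoplus_{\lambda\le -2}\g_\lambda$, and it is $(U_\tau)_\xi\times\mathcal{S}_\tau\longrightarrow\xi+\p_\tau$ that is an isomorphism (Bielawski's Lemma 3.2). Second, the Poisson comparison, which you yourself flag as the main obstacle, is left as a sketch (``leaf-by-leaf'' or ``a general comparison of Dirac reduction with Hamiltonian reduction''); for a Poisson, non-symplectic $X$ this is precisely the content that needs proof. The paper supplies it by a pointwise bivector computation: for $\alpha\in T_x^*X_\tau$ one takes the canonical extension $\tilde\alpha$ afforded by the transversal splitting, computes the lift $\tilde\alpha'$ of $d\pi_z^*(\alpha)$ annihilating the orbit directions, and uses the explicit formula for the bivector of $T^*G$ at points $(e,x)$ (Lemma \ref{Lemma:PoissonT*G}) to show $(P_{\mathrm{red}})_x(\alpha)=P_x(\tilde\alpha)=(P_{\mathrm{tr}})_x(\alpha)$. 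Your plan becomes a proof once these two points are repaired.
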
   

We also consider some special cases of the Poisson slice construction, including the following well-known result in the symplectic category. 

\begin{obs}\label{Observation}
Let $X$ be a symplectic variety endowed with a Hamiltonian action of $G$ and a moment map $\nu:X\longrightarrow\mathfrak{g}$. Suppose that $\tau$ is an $\mathfrak{sl}_2$-triple in $\mathfrak{g}$. The Poisson structure on $X_{\tau}$ makes it a symplectic subvariety of $X$.
\end{obs}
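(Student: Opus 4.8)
The plan is to deduce this from two ingredients: the observation, already recorded in the discussion preceding Proposition~\ref{Proposition: Slices}, that $X_{\tau}=\nu^{-1}(\mathcal{S}_{\tau})$ is a Poisson transversal in $X$; and the elementary fact that a Poisson transversal inside a symplectic manifold is automatically a symplectic submanifold, with the induced Poisson structure being the one dual to the restricted symplectic form. Recall that a locally closed submanifold $Y\subseteq(M,\pi)$ is a \emph{Poisson transversal} if $T_xM=T_xY\oplus\pi^{\sharp}(N_x^{*}Y)$ for all $x\in Y$, where $N^{*}Y\subseteq T^{*}M\vert_Y$ is the conormal bundle; such a $Y$ then inherits a canonical Poisson structure $\pi_Y$, characterized by $\pi_Y^{\sharp}(df\vert_Y)=\pi^{\sharp}(df)\vert_Y$ for those functions $f$ on $M$ whose differential annihilates $\pi^{\sharp}(N^{*}Y)$ along $Y$.

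Now assume $X$ is symplectic with symplectic form $\omega$, so that $\pi$ is nondegenerate and $\pi^{\sharp}=(\omega^{\flat})^{-1}\colon T^{*}X\to TX$ is an isomorphism. I would first record the purely linear-algebraic identity $\pi^{\sharp}_{x}(V^{\circ})=V^{\omega}$ for any subspace $V\subseteq T_xX$, where $V^{\circ}\subseteq T_x^{*}X$ is its annihilator and $V^{\omega}\subseteq T_xX$ its $\omega$-orthogonal complement: indeed $v\in V^{\omega}$ if and only if $\omega^{\flat}(v)$ kills $V$, i.e. $\omega^{\flat}(v)\in V^{\circ}$, i.e. $v\in\pi^{\sharp}_x(V^{\circ})$. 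Applying this with $V=T_xX_{\tau}$ and using $N_x^{*}X_{\tau}=(T_xX_{\tau})^{\circ}$, the Poisson-transversal condition for $X_{\tau}$ becomes
\[
T_xX=T_xX_{\tau}\oplus(T_xX_{\tau})^{\omega}\qquad\text{for all }x\in X_{\tau}.
\]
Since $T_xX_{\tau}\cap(T_xX_{\tau})^{\omega}$ is precisely the radical of $\omega\vert_{T_xX_{\tau}}$, this direct sum forces $\omega\vert_{T_xX_{\tau}}$ to be nondegenerate; hence $X_{\tau}$ is a symplectic subvariety of $(X,\omega)$.

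It then remains to see that the symplectic form $\omega\vert_{X_{\tau}}$ induces exactly the Poisson structure $\pi_{X_{\tau}}$ coming from the Poisson-transversal construction, which is the structure meant in the statement. This is once more pointwise linear algebra: a covector on $X$ that annihilates $\pi^{\sharp}_x(N_x^{*}X_{\tau})=(T_xX_{\tau})^{\omega}$ lies in $\bigl((T_xX_{\tau})^{\omega}\bigr)^{\circ}$, and its image under $\pi^{\sharp}_x$ lies in $\bigl((T_xX_{\tau})^{\omega}\bigr)^{\omega}=T_xX_{\tau}$ by nondegeneracy; unwinding the characterization of $\pi_{X_{\tau}}$ above and invoking the identity $\pi^{\sharp}(V^{\circ})=V^{\omega}$ once more identifies $\pi_{X_{\tau}}^{\sharp}$ with the inverse of $\omega\vert_{T_xX_{\tau}}^{\flat}$, i.e. with the bivector dual to $\omega\vert_{X_{\tau}}$. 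I do not anticipate a genuine obstacle; the only points needing care are this last bookkeeping reconciling two a priori different Poisson structures on $X_{\tau}$, and --- logically upstream --- the input that $\nu^{-1}(\mathcal{S}_{\tau})$ really is a Poisson transversal, which rests on $\mathcal{S}_{\tau}$ being a Poisson transversal in $\g$ for the Lie--Poisson structure together with the stability of Poisson transversals under pullback along a moment map. A less computational alternative is to invoke Proposition~\ref{Proposition: Slices}(ii), where $X_{\tau}\cong X\sslash_{\xi}U_{\tau}$ is realized as a Hamiltonian reduction of the symplectic variety $X$ by a free and proper action of the unipotent group $U_{\tau}$; Marsden--Weinstein reduction then produces the symplectic structure directly.
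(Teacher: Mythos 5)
Your argument is correct and follows essentially the same route as the paper: Corollary \ref{Corollary: Slice Poisson transversal} establishes that $X_{\tau}=\nu^{-1}(\mathcal{S}_{\tau})$ is a Poisson transversal (via Proposition \ref{Proposition: Poisson transversal} and the fact that $\mathcal{S}_{\tau}$ is a Poisson transversal in $\g$), and Corollary \ref{Corollary: Log symplectic}(iii) then concludes by invoking Proposition \ref{Proposition: Symplectic transversal}. The only difference is that you supply the pointwise linear-algebra proof of Proposition \ref{Proposition: Symplectic transversal} (including the reconciliation of the transversal Poisson structure with the restricted symplectic form), which the paper simply cites as well known.
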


Now suppose that the above-mentioned Poisson variety $X$ is \textit{log symplectic}  \cite{Gualtieri,Goto,Radko,PymElliptic}, by which the following is meant: $X$ has a unique open dense symplectic leaf, and the degeneracy locus of the Poisson bivector is a reduced normal crossing divisor. We establish the following log symplectic counterpart of Observation \ref{Observation}.

\begin{prop}\label{Proposition: Slight generalization}
Let $X$ be a log symplectic variety endowed with a Hamiltonian $G$-action and moment map $\nu:X\longrightarrow\mathfrak{g}$. Suppose that $\tau$ is any $\mathfrak{sl}_2$-triple in $\mathfrak{g}$. Each irreducible component of $X_{\tau}$ is then a Poisson subvariety of $X_{\tau}$. The resulting Poisson structure on each component makes the component a log symplectic subvariety of $X$.
\end{prop}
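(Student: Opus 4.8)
The plan is to realize each component of $X_{\tau}$ as a Poisson transversal in $X$ whose degeneracy divisor is the transverse intersection with that of $X$. We dispose of the trivial case $\tau=(0,0,0)$, for which $X_{\tau}=X$, and henceforth fix a nonzero triple $\tau=(\xi,h,\eta)$.

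Recall from Proposition~\ref{Proposition: Slices} and the surrounding discussion that $X_{\tau}$ is a Poisson transversal in $X$; in particular it is a smooth closed subvariety, so its irreducible components coincide with its connected components. Being open and closed in $X_{\tau}$, each such component inherits the Poisson structure, which proves the first assertion. Fix one component $Y$, with induced Poisson bivector $\pi_{Y}$. I will use the two defining properties of a Poisson transversal: for each $x\in Y$ one has $T_{x}X=T_{x}Y\oplus\pi^{\sharp}(N^{*}_{x}Y)$, where $N^{*}_{x}Y\subseteq T^{*}_{x}X$ is the conormal space, and the second summand lies in $T_{x}\mathcal L$, with $\mathcal L$ the symplectic leaf of $X$ through $x$. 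Combined with the standard rank identity $\rank(\pi_{Y})_{x}=\dim Y-\bigl(\dim X-\rank(\pi)_{x}\bigr)$, this already shows that the degeneracy locus of $\pi_{Y}$ is, as a set, exactly $Y\cap D$, where $D\subseteq X$ is the degeneracy divisor.

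The crux is a transversality statement. Write $D=D_{1}\cup\cdots\cup D_{r}$ with the $D_{i}$ the (smooth) irreducible components of the normal crossing divisor $D$. Since the locus where the rank of a Poisson bivector drops is saturated by symplectic leaves, $D$ is a Poisson subvariety, and in fact so is each $D_{i}$: a symplectic leaf contained in $D$ is connected, hence, by an elementary open-and-closed argument using that the $D_{i}$ are the global irreducible components of the reduced divisor $D$, lies in a single $D_{i}$. Consequently, for $x\in Y\cap D_{i_{1}}\cap\cdots\cap D_{i_{k}}$ the leaf $\mathcal L$ through $x$ lies in $D_{i_{1}}\cap\cdots\cap D_{i_{k}}$, so $T_{x}\mathcal L\subseteq\bigcap_{j}T_{x}D_{i_{j}}$; feeding this into the Poisson-transversal decomposition gives $T_{x}X=T_{x}Y+\bigcap_{j}T_{x}D_{i_{j}}$. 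As the $D_{i}$ meet normally, $\bigcap_{j}T_{x}D_{i_{j}}$ has codimension $k$ in $T_{x}X$, so $T_{x}Y\cap\bigcap_{j}T_{x}D_{i_{j}}$ has codimension $k$ in $T_{x}Y$. Thus $Y$ is transverse to $D$ and to all multi-intersections of the $D_{i}$; it follows that each nonempty $Y\cap D_{i}$ is a smooth divisor in $Y$, that $Y\cap D=\bigcup_{i}(Y\cap D_{i})$ is a reduced normal crossing divisor in $Y$, and in particular that $Y\not\subseteq D$.

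It remains to see that $(Y,\pi_{Y})$ is log symplectic and compatibly embedded. Since $Y$ is smooth and irreducible and $Y\cap D$ is a proper closed subset, $Y^{\circ}:=Y\setminus(Y\cap D)$ is connected, open and dense, and by the rank identity $\pi_{Y}$ is nondegenerate precisely on $Y^{\circ}$; hence $Y^{\circ}$ is the unique open dense symplectic leaf of $Y$, and it sits inside the open symplectic leaf $X\setminus D$ of $X$ as a symplectic submanifold. Finally one must upgrade the equality ``degeneracy locus $=Y\cap D$'' from sets to divisors, i.e.\ check that $\wedge^{\dim Y/2}\pi_{Y}$ vanishes to first order along each $Y\cap D_{i}$. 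For this it suffices to work near a generic point $x$ of $Y\cap D_{i}$ --- where $x$ is a smooth point of $D$ of corank exactly $2$ --- and combine the local normal form of a log symplectic structure at such a point with the normal form theorem for Poisson transversals; this reduces the assertion to the first-order vanishing of $\wedge^{\dim X/2}\pi$ along $D$, which holds by hypothesis. I expect this last local computation, together with the passage from ``$D_{i}$ is Poisson'' to transversality of $Y$ with every stratum of $D$, to be the main points requiring genuine care; the rest is formal once Proposition~\ref{Proposition: Slices} is in hand.
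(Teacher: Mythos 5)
Your argument is structurally the same as the paper's up to the decisive point. The paper also reduces to an irreducible component $Y$ of the Poisson transversal $X_{\tau}$ (Corollary \ref{Corollary: Slice Poisson transversal}), also observes that each stratum $Z_I=\bigcap_{i\in I}Z_i$ of the degeneracy divisor is a union of symplectic leaves, and also deduces transversality of $Y$ to every $Z_I$ from the fact that a Poisson transversal is transverse to all leaves --- exactly your computation $T_xX=T_xY+\bigcap_jT_xD_{i_j}$. Your preliminary reductions (components of a smooth variety are open and closed, hence Poisson subvarieties; the rank identity identifying the degeneracy locus of $\pi_Y$ with $Y\cap D$ as a set) are all correct and match the paper's setup.

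Where you diverge is the endgame, and that is also where your proposal is incomplete. The paper does not attempt to verify directly that $\wedge^{\dim Y/2}\pi_Y$ has reduced vanishing divisor: having established transversality to all strata and symplecticity of $Y\cap X_0$ in $X_0$, it invokes Gualtieri's notion of a \emph{log symplectic subvariety} \cite[Definition 7.16]{Gualtieri}, which packages precisely the assertion that such a $Y$ inherits a log symplectic bivector $P_{\mathrm{log}}$ with divisor $Y\cap Z$, and then identifies $P_{\mathrm{log}}$ with the transversal bivector $P_{\mathrm{tr}}$ by noting they agree on the dense leaf $Y_0$. You instead propose to prove reducedness by hand, by localizing at a generic point of a component $W$ of $Y\cap D_i$ and combining the log symplectic normal form with the Poisson-transversal normal form. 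The idea is sound --- the multiplicity of $\operatorname{div}(\wedge^{\dim Y/2}\pi_Y)$ along $W$ is computed at the generic point, your transversality result guarantees that such a point lies on $D_i$ alone, and the (holomorphic or formal) splitting $(X,\pi)\cong(Y,\pi_Y)\times(\text{symplectic germ})$ near $Y$ turns the Pfaffian of $\pi$ into the Pfaffian of $\pi_Y$ times a unit, so first-order vanishing descends --- but you explicitly leave this computation as a sketch, and it is exactly the nontrivial content that the paper delegates to \cite{Gualtieri}. Two smaller remarks: your parenthetical requirement that the generic point of $W$ have corank exactly $2$ is not needed for the splitting argument (and justifying it independently of the conclusion is not entirely free); and your open-and-closed argument that a leaf contained in $D$ lies in a single $D_i$ is looser than it should be (a connected set in a union of closed components need not lie in one of them a priori), though the fact itself is standard and the paper likewise only cites it as \cite[Exercise 5.2]{Pym}. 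To make your proof self-contained you should either carry out the local splitting computation or, as the paper does, cite the log symplectic subvariety formalism.
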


Now assume $G$ to be of adjoint type. One may consider the De Concini--Procesi wonderful compactification $\overline{G}$ of $G$ \cite{DeConcini}, along with the divisor $D:=\overline{G}\setminus G$. The data $(\overline{G},D)$ determine a \textit{log cotangent bundle} $T^*\overline{G}(\log(D))$, which is known to have a canonical log symplectic structure. Its unique open dense symplectic leaf is $T^*G$, and the canonical Hamiltonian ($G\times G$)-action on $T^*G$ extends to such an action on $T^*\overline{G}(\log(D))$. The moment maps
$$\rho=(\rho_L,\rho_R):T^*G\longrightarrow\mathfrak{g}\oplus\mathfrak{g}\quad\text{and}\quad\overline{\rho}=(\overline{\rho}_L,\overline{\rho}_R):T^*\overline{G}(\log(D))\longrightarrow\mathfrak{g}\oplus\mathfrak{g}$$ can be written in explicit terms. This leads to the following straightforward observations, whose proofs use Observation \ref{Observation} and Proposition \ref{Proposition: Slight generalization}. To this end, recall that a principal $\mathfrak{sl}_2$-triple is an $\mathfrak{sl}_2$-triple consisting of regular elements.

\begin{obs}
Let $\tau=(\xi,h,\eta)$ be a principal $\mathfrak{sl}_2$-triple in $\g$, and consider the principal $\mathfrak{sl}_2$-triple $(\tau,\tau):=((\xi,\xi),(h,h),(\eta,\eta))$ in $\g\oplus\g$. One then has
$$(T^*G)_{(\tau,\tau)}=\rho^{-1}(\mathcal{S}_{\tau}\times\mathcal{S}_{\tau})=\mathcal{Z}_{\mathfrak{g}}^{\tau},\quad\text{and}\quad (T^*\overline{G}(\log D))_{(\tau,\tau)}=\overline{\rho}^{-1}(\mathcal{S}_{\tau}\times\mathcal{S}_{\tau})=\overline{\mathcal{Z}_{\mathfrak{g}}^{\tau}}.$$ The first Poisson slice is symplectic, while the second is log symplectic. 
\end{obs}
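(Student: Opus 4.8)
The plan is to realize both Poisson slices as instances of Observation~\ref{Observation} and Proposition~\ref{Proposition: Slight generalization}, applied respectively to the Hamiltonian $(G\times G)$-varieties $T^*G$ and $T^*\overline{G}(\log D)$, and then to identify the resulting varieties with $\mathcal{Z}_\g^\tau$ and $\overline{\mathcal{Z}_\g^\tau}$. First I would note that $(\xi,\xi)$ is regular in $\g\oplus\g$ exactly when $\xi$ is regular in $\g$, so that $(\tau,\tau)$ is a principal $\mathfrak{sl}_2$-triple, and that $(\g\oplus\g)_{(\eta,\eta)}=\g_\eta\oplus\g_\eta$ yields $\mathcal{S}_{(\tau,\tau)}=\mathcal{S}_\tau\times\mathcal{S}_\tau$. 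The equalities $(T^*G)_{(\tau,\tau)}=\rho^{-1}(\mathcal{S}_\tau\times\mathcal{S}_\tau)$ and $(T^*\overline{G}(\log D))_{(\tau,\tau)}=\overline{\rho}^{-1}(\mathcal{S}_\tau\times\mathcal{S}_\tau)$ are then immediate from the definition of a Poisson slice.

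For $T^*G$ I would use the explicit expression for $\rho$ recorded above, under which $\rho^{-1}(\mathcal{S}_\tau\times\mathcal{S}_\tau)=\{(g,y)\in G\times\mathcal{S}_\tau:\mathrm{Ad}_g(y)\in\mathcal{S}_\tau\}$. Since $\tau$ is principal, $\mathcal{S}_\tau$ is a Kostant section, so the composite $\mathcal{S}_\tau\hookrightarrow\g\to\g\sslash G$ is an isomorphism and any two $G$-conjugate elements of $\mathcal{S}_\tau$ are equal. Hence $y\in\mathcal{S}_\tau$ and $\mathrm{Ad}_g(y)\in\mathcal{S}_\tau$ force $\mathrm{Ad}_g(y)=y$, giving $\rho^{-1}(\mathcal{S}_\tau\times\mathcal{S}_\tau)\subseteq\mathcal{Z}_\g^\tau$; the reverse inclusion is immediate, so $\rho^{-1}(\mathcal{S}_\tau\times\mathcal{S}_\tau)=\mathcal{Z}_\g^\tau$. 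As $T^*G$ is symplectic, Observation~\ref{Observation} shows that this Poisson slice is a symplectic subvariety of $T^*G$.

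For the logarithmic cotangent bundle I would run the parallel argument with $\overline{\rho}$ in place of $\rho$. Restricting the Poisson slice to the dense open symplectic leaf $T^*G$ gives $\overline{\rho}^{-1}(\mathcal{S}_\tau\times\mathcal{S}_\tau)\cap T^*G=(T^*G)_{(\tau,\tau)}=\mathcal{Z}_\g^\tau$, which is irreducible since it is the total space of the regular centralizer group scheme over $\mathcal{S}_\tau\cong\mathbb{A}^{\rank(\g)}$ --- a scheme that is smooth with connected fibres because $G$ is of adjoint type. Combining the explicit description of $\overline{\rho}$ with B\u{a}libanu's construction of $\overline{\mathcal{Z}_\g^\tau}$ \cite{BalibanuUniversal} should then show that $\overline{\rho}^{-1}(\mathcal{S}_\tau\times\mathcal{S}_\tau)$ is the closure of $\mathcal{Z}_\g^\tau$ in $T^*\overline{G}(\log D)$, namely $\overline{\mathcal{Z}_\g^\tau}$; in particular it is irreducible. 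Since $T^*\overline{G}(\log D)$ is log symplectic, Proposition~\ref{Proposition: Slight generalization} equips $\overline{\mathcal{Z}_\g^\tau}$ --- its unique irreducible component --- with a log symplectic structure as a subvariety of $T^*\overline{G}(\log D)$. That this agrees with B\u{a}libanu's structure follows because the two Poisson bivectors are regular sections of a vector bundle agreeing on the dense open subset $\mathcal{Z}_\g^\tau$ (on which both restrict to the symplectic structure of the previous paragraph), hence agreeing everywhere.

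I expect the main obstacle to be the identification $\overline{\rho}^{-1}(\mathcal{S}_\tau\times\mathcal{S}_\tau)=\overline{\mathcal{Z}_\g^\tau}$, equivalently the statement that $\overline{\rho}^{-1}(\mathcal{S}_\tau\times\mathcal{S}_\tau)$ acquires no irreducible component inside the boundary divisor $T^*\overline{G}(\log D)\setminus T^*G$. Verifying this cleanly seems to require a local normal form for $T^*\overline{G}(\log D)$ transverse to $D$, matched against B\u{a}libanu's description of the boundary of $\overline{\mathcal{Z}_\g^\tau}$; granting it, the symplectic and log symplectic conclusions follow formally from Observation~\ref{Observation} and Proposition~\ref{Proposition: Slight generalization} as indicated.
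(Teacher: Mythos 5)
Your treatment of $T^*G$ is correct and coincides with the paper's: the formula $\rho(g,y)=(\mathrm{Ad}_g(y),y)$ plus Kostant's theorem that $\mathcal{S}_\tau$ meets each regular adjoint orbit exactly once gives $\rho^{-1}(\mathcal{S}_\tau\times\mathcal{S}_\tau)=\mathcal{Z}_\g^\tau$, and Observation \ref{Observation} (i.e.\ Corollary \ref{Corollary: Log symplectic}(iii) in the body) makes this Poisson slice a symplectic subvariety of $T^*G$.

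For the log cotangent bundle your skeleton is right, but you leave the decisive step open and propose a heavier route to it than the paper uses. In the paper, $\overline{\mathcal{Z}_\g^\tau}$ is \emph{defined} as $\overline{\rho}^{-1}(\mathcal{S}_\tau\times\mathcal{S}_\tau)$ (Subsection \ref{Subsection: Relation}); the reconciliation with B\u{a}libanu's object goes through the Whittaker-reduction isomorphism $\overline{\rho}^{-1}(\mathcal{S}_\tau\times\mathcal{S}_\tau)\cong T^*\overline{G}(\log D)\sslash_{(\xi,\xi)}U\times U$ supplied by Proposition \ref{Proposition:PoissonSlice}, not through a closure computation, and the log symplectic property is quoted from \cite{BalibanuUniversal}. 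The obstacle you flag --- that the slice might have irreducible components inside the boundary divisor, so that Proposition \ref{Proposition: Slight generalization} only yields ``each component is log symplectic'' --- is genuine, but it does not call for a local normal form transverse to $D$. The paper's mechanism, spelled out for the analogous slice $\overline{G\times\mathcal{S}_\tau}$ in the proof of Theorem \ref{Theorem: irredgeneral}, transfers verbatim: $\overline{\rho}$ is proper with connected fibres over its image $\{(x,y):\chi(x)=\chi(y)\}$ (see \eqref{Equation: Image rho}), the slice surjects onto $(\mathcal{S}_\tau\times\mathcal{S}_\tau)\cap\mathrm{image}(\overline{\rho})=\{(x,x):x\in\mathcal{S}_\tau\}\cong\mathcal{S}_\tau$, which is irreducible, so the slice is connected; being smooth (a Poisson transversal in a smooth variety), it is therefore irreducible, hence equal to the closure of $\mathcal{Z}_\g^\tau$ with no boundary components. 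With that in hand, your appeal to Proposition \ref{Proposition: Slight generalization} and your density argument identifying the two Poisson bivectors on $\overline{\mathcal{Z}_\g^\tau}$ go through.
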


\begin{obs}
Consider the Hamiltonian action of $G=\{e\}\times G\subseteq G\times G$ on $T^*G$. If $\tau$ is an $\mathfrak{sl}_2$-triple in $\g$, then
$$(T^*G)_\tau=\rho_R^{-1}(\mathcal{S}_{\tau})=G\times\mathcal{S}_{\tau}.$$ This Poisson slice is symplectic.
\end{obs}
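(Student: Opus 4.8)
The plan is to reduce the statement to the definition of the Poisson slice, to the explicit formula for the moment map $\rho_R$, and to the symplectic version of the slice construction already recorded in Observation \ref{Observation}.

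First I would note that the equality $(T^*G)_\tau=\rho_R^{-1}(\mathcal{S}_\tau)$ is essentially a matter of unwinding definitions. The Hamiltonian $G$-variety referred to in the statement is $T^*G$ equipped with the restriction to $\{e\}\times G\subseteq G\times G$ of its canonical Hamiltonian $(G\times G)$-action; the moment map for this restricted action is precisely the second component $\rho_R$ of the $(G\times G)$-moment map $\rho=(\rho_L,\rho_R)$. By definition of the Poisson slice, $(T^*G)_\tau=\rho_R^{-1}(\mathcal{S}_\tau)$, with no further input required.

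Next I would invoke the explicit description of $\rho_R$ under the left trivialization and Killing form identification $T^*G\cong G\times\g$, under which $\rho_R(g,x)=x$. It is then immediate that
$$\rho_R^{-1}(\mathcal{S}_\tau)=\{(g,x)\in G\times\g:x\in\mathcal{S}_\tau\}=G\times\mathcal{S}_\tau,$$
giving the second equality. Finally, $T^*G$ is a symplectic variety carrying a Hamiltonian action of $\{e\}\times G\cong G$ with moment map $\rho_R$, so Observation \ref{Observation} applies verbatim and shows that the Poisson structure on $(T^*G)_\tau$ makes it a symplectic subvariety of $T^*G$.

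The only step demanding any care — and hence the closest thing to an obstacle — is the bookkeeping of conventions behind the formula $\rho_R(g,x)=x$: a different normalization such as $\rho_R(g,x)=-x$ or $\rho_R(g,x)=\Ad_{g^{-1}}(x)$ would replace $\mathcal{S}_\tau$ on the right-hand side by the Slodowy slice of a conjugate $\mathfrak{sl}_2$-triple, so one must ensure consistency with the identifications and sign conventions fixed earlier in the paper. As an alternative to the direct computation, one could instead deduce $(T^*G)_\tau\cong G\times\mathcal{S}_\tau$ from the isomorphism $X_\tau\cong X\sslash_\xi U_\tau$ of Proposition \ref{Proposition: Slices}(ii) applied to $X=T^*G$, via a short Hamiltonian reduction computation; but the direct route above is cleaner and more transparent.
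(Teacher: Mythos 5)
Your proposal is correct and follows essentially the same route as the paper: the paper proves this statement as Corollary \ref{Corollary: Symplectic subvariety} by unwinding the definition of the Poisson slice, using the explicit formula $\rho_R(g,y)=y$ from \eqref{Equation: First moment}, and applying the symplectic case of the Poisson-transversal result (Corollary \ref{Corollary: Log symplectic}(iii), which is Observation \ref{Observation} of the introduction) to $X=T^*G$ with the $G_R$-action. Your remark about sign and trivialization conventions is exactly the point settled by the paper's explicit formula for $\rho$ and its chosen identification of $(\g\oplus\g)^*$ with $\g\oplus\g$.
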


In light of these observations, it is natural to consider the Poisson slice
$$\overline{G\times\mathcal{S}_{\tau}}:=\overline{\rho}_R^{-1}(\mathcal{S}_{\tau})\subseteq T^*\overline{G}(\log D).$$ One has an inclusion $G\times\mathcal{S}_{\tau}\subseteq \overline{G\times\mathcal{S}_{\tau}}$, while $G\times\mathcal{S}_{\tau}$ and $\overline{G\times\mathcal{S}_{\tau}}$ carry residual Hamiltonian actions of $G=G\times\{e\}\subseteq G\times G$. The respective moment maps are $$\rho_{\tau}:=\rho_L\big\vert_{G\times\mathcal{S}_{\tau}}\quad\text{and}\quad\overline{\rho}_{\tau}:=\overline{\rho}_L\big\vert_{\overline{G\times\mathcal{S}_{\tau}}},$$ and they feature in the following result.

\begin{thm}\label{Theorem: triple}
Let $\tau$ be an $\mathfrak{sl}_2$-triple in $\g$.
\begin{itemize}
\item[(i)] The Poisson slice $\overline{G\times\mathcal{S}_{\tau}}$ is irreducible and log symplectic.
\item[(ii)] The inclusion $G\times\mathcal{S}_{\tau}\longrightarrow\overline{G\times\mathcal{S}_{\tau}}$ is a $G$-equivariant symplectomorphism onto the unique open dense symplectic leaf in $\overline{G\times\mathcal{S}_{\tau}}$.
\item[(iii)] The diagram \begin{equation}\label{Equation: Diagram1}\begin{tikzcd}
G\times\mathcal{S}_{\tau} \arrow{rr} \arrow[swap]{dr}{\rho_\tau}& & \overline{G\times\mathcal{S}_\tau} \arrow{dl}{\overline{\rho}_\tau}\\
& \mathfrak{g} & 
\end{tikzcd}
\end{equation}
commutes.
\item[(iv)] If $\tau$ is a principal $\mathfrak{sl}_2$-triple, then \eqref{Equation: Diagram1} realizes $\overline{\rho}_{\tau}$ as a fibrewise compactification of $\rho_{\tau}$.  
\end{itemize}
\end{thm}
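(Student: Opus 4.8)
The plan is to deduce all four parts from Observation~\ref{Observation}, Proposition~\ref{Proposition: Slight generalization}, and the explicit form of $\overline{\rho}$; throughout I regard $\overline{G\times\mathcal{S}_\tau}=\overline{\rho}_R^{-1}(\mathcal{S}_\tau)$ as the Poisson slice of the log symplectic variety $X:=T^*\overline{G}(\log D)$ associated to the residual Hamiltonian $\{e\}\times G$-action, whose moment map is $\overline{\rho}_R$. For (i), Proposition~\ref{Proposition: Slight generalization} applied to $X$ and this action immediately makes every irreducible component of $\overline{G\times\mathcal{S}_\tau}$ a Poisson subvariety that is log symplectic with its induced structure, so the remaining content is irreducibility. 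Since $\overline{\rho}_R$ restricts to $\rho_R$ on the open dense symplectic leaf $T^*G\subseteq X$, the open subset $\overline{G\times\mathcal{S}_\tau}\cap T^*G$ equals $(T^*G)_\tau=G\times\mathcal{S}_\tau$, which is irreducible; it therefore suffices to show that every irreducible component $Z$ of $\overline{G\times\mathcal{S}_\tau}$ meets $T^*G$. Here I would argue by dimension: $Z$ is log symplectic of dimension $\dim X-\mathrm{codim}_\g\mathcal{S}_\tau=\dim G+\dim\g_\eta$, so its open dense leaf has this dimension, whereas the symplectic leaves of the Poisson transversal $\overline{G\times\mathcal{S}_\tau}$ are the connected components of the intersections $\overline{G\times\mathcal{S}_\tau}\cap\mathcal{L}$ over the leaves $\mathcal{L}$ of $X$, of dimension $\dim\mathcal{L}-\mathrm{codim}_\g\mathcal{S}_\tau$; as $T^*G$ is the unique $2\dim G$-dimensional leaf of $X$ while every other leaf lies in the degeneracy divisor of $X$ and hence has dimension at most $2\dim G-2$, the open dense leaf of $Z$ cannot avoid $T^*G$. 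Thus $Z\cap T^*G\neq\emptyset$, and irreducibility follows.

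For (ii), the unique open dense symplectic leaf of $\overline{G\times\mathcal{S}_\tau}$ is the complement of its degeneracy divisor; since the rank of the bivector of a Poisson transversal drops exactly where the rank of the ambient bivector drops, that divisor is the intersection of $\overline{G\times\mathcal{S}_\tau}$ with the degeneracy divisor of $X$, whose complement in $\overline{G\times\mathcal{S}_\tau}$ is $\overline{G\times\mathcal{S}_\tau}\cap T^*G=G\times\mathcal{S}_\tau$. On this open subset, which lies inside the open leaf $T^*G$ of $X$, the induced Poisson structure coincides with the Poisson-transversal structure of $G\times\mathcal{S}_\tau\subseteq T^*G$, which is symplectic by Observation~\ref{Observation}; and the entire picture is visibly equivariant for $G=G\times\{e\}$, since this group preserves $T^*G$ and $\overline{\rho}_R^{-1}(\mathcal{S}_\tau)$. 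This gives (ii). Part (iii) is immediate: $\overline{\rho}_\tau$ is the restriction of $\overline{\rho}_L$, and $\overline{\rho}_L$ restricts to $\rho_L$ on $T^*G\supseteq G\times\mathcal{S}_\tau$, so the composite of $G\times\mathcal{S}_\tau\hookrightarrow\overline{G\times\mathcal{S}_\tau}$ with $\overline{\rho}_\tau$ is $\rho_L\big\vert_{G\times\mathcal{S}_\tau}=\rho_\tau$.

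For (iv), take $\tau$ principal and let $\chi\colon\g\longrightarrow\g\sslash G$ be the adjoint quotient. Since $\mathcal{S}_\tau$ is then a Kostant section (meeting each fibre of $\chi$ in one regular point, which lies in the unique regular $G$-orbit of that fibre), the image of $\rho_\tau$ is $G\cdot\mathcal{S}_\tau=\g_{\mathrm{reg}}$. From $\chi\circ\rho_L=\chi\circ\rho_R$ on the dense subset $T^*G$ we obtain $\chi\circ\overline{\rho}_L=\chi\circ\overline{\rho}_R$ on all of $X$, so for $z\in\g_{\mathrm{reg}}$ with $\{y\}=\mathcal{S}_\tau\cap\chi^{-1}(\chi(z))$ any preimage under $\overline{\rho}_\tau$ is forced to have $\overline{\rho}_R$-value $y$; this gives $\overline{\rho}_\tau^{-1}(z)=\overline{\rho}^{-1}((z,y))$ and likewise $\rho_\tau^{-1}(z)=\rho^{-1}((z,y))$, the latter a torsor over the $G$-centralizer of $y$. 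Choosing $g\in G$ with $\Ad_g(y)=z$ and acting by $(g,e)\in G\times G$ (which intertwines $\overline{\rho}_L$ with $\Ad_g\circ\overline{\rho}_L$ and fixes $\overline{\rho}_R$) identifies these with $\overline{\rho}^{-1}((y,y))$ and $\rho^{-1}((y,y))$, i.e. with the fibres over $y$ of $\overline{\mathcal{Z}_\g^\tau}=(T^*\overline{G}(\log D))_{(\tau,\tau)}\longrightarrow\mathcal{S}_\tau$ and $\mathcal{Z}_\g^\tau\longrightarrow\mathcal{S}_\tau$. By \cite{BalibanuUniversal} the first fibre is complete and contains the second as an open dense subset; transporting back, $\rho_\tau^{-1}(z)$ is open and dense in the complete variety $\overline{\rho}_\tau^{-1}(z)$ for every $z$ in the image of $\rho_\tau$, which is exactly the assertion that \eqref{Equation: Diagram1} realizes $\overline{\rho}_\tau$ as a fibrewise compactification of $\rho_\tau$.

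The step I expect to be the main obstacle is the completeness of the fibres of $\overline{\rho}_\tau$ in part (iv). The argument above avoids proving properness of $\overline{\rho}_\tau$ by hand, reducing it — via the identity $\chi\circ\overline{\rho}_L=\chi\circ\overline{\rho}_R$ and the Kostant-section property available for principal $\tau$ — to the known properness of $\overline{\mathcal{Z}_\g^\tau}\longrightarrow\mathcal{S}_\tau$; the delicate points are checking that the successive identifications are compatible with the left $G$-action and, should one also want the comparison at the level of symplectic forms rather than of varieties, invoking the functoriality of the Poisson-transversal construction used for (ii).
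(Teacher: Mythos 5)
Your treatments of (ii) and (iii) coincide with the paper's (Proposition \ref{Proposition: nice}), and your proof of irreducibility in (i) is correct but takes a genuinely different route from Theorem \ref{Theorem: irredgeneral}. You apply Proposition \ref{Proposition: Slight generalization} to each connected component $Z$ of the smooth variety $\overline{G\times\mathcal{S}_\tau}$, note that the open dense symplectic leaf of $Z$ must lie in the unique top-dimensional leaf $T^*G$ of $T^*\overline{G}(\log D)$ (your dimension count does this; alternatively, the cited proposition already identifies that leaf as $Z\cap T^*G$, forcing it to be nonempty), and then conclude from the connectedness of $G\times\mathcal{S}_\tau=\overline{G\times\mathcal{S}_\tau}\cap T^*G$ that there is only one component. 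The paper instead maps $\overline{G\times\mathcal{S}_\tau}$ properly and surjectively, with connected fibres, onto the fibre product $Y=\{(x,y)\in\g\times\mathcal{S}_\tau:\chi(x)=\chi(y)\}$ and proves $Y$ irreducible via faithful flatness and irreducibility of fibres of the adjoint quotient restricted to Slodowy slices (Slodowy, Premet). Your argument is softer: it dispenses with the properness, connected-fibre, and flatness inputs entirely, leaning only on the general structure theory of Poisson transversals in log symplectic varieties that the paper has already set up.

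In (iv) there is a gap, though a repairable one. Your reduction of $\overline{\rho}_\tau^{-1}(z)$ to a fibre of $\overline{\mathcal{Z}_\g^\tau}\longrightarrow\mathcal{S}_\tau$ via translation by $(g,e)$ requires $g$ with $\mathrm{Ad}_g(z_\tau)=z$, hence only applies when $z$ is regular. But $\overline{\rho}_\tau$ is surjective onto all of $\g$: by \eqref{Equation: Image rho} and the section property of $\mathcal{S}_\tau$, every $z\in\g$ admits $\gamma\in\overline{G}$ with $(z,z_\tau)\in\gamma$. The fibres over non-regular $z$ are nonempty, disjoint from $G\times\mathcal{S}_\tau$, and not covered by your argument, whereas the assertion being proved (cf.\ Remark \ref{Remark: Projective fibres} and the proof of Theorem \ref{Theorem: Compact}) is that \emph{every} fibre of $\overline{\rho}_\tau$ is projective. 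The fix is already implicit in your own computation: the identity $\chi\circ\overline{\rho}_L=\chi\circ\overline{\rho}_R$ together with the Kostant section property gives $\overline{\rho}_\tau^{-1}(z)\cong\{\gamma\in\overline{G}:(z,z_\tau)\in\gamma\}$ for \emph{all} $z\in\g$, and this set is closed in the projective variety $\overline{G}$, hence projective. That is precisely the paper's one-line argument, and it makes the detour through B\u{a}libanu's properness result for $\overline{\mathcal{Z}_\g^\tau}\longrightarrow\mathcal{S}_\tau$ unnecessary.
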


Our paper subsequently discusses the relation of \eqref{Equation: Diagram1} to B\u{a}libanu's fibrewise compactification \begin{equation}\label{Equation: Diagram2}\begin{tikzcd}
\mathcal{Z}_{\g}^{\tau} \arrow{rr} \arrow[swap]{dr}{}& & \overline{\mathcal{Z}_{\g}^{\tau}} \arrow{dl}{}\\
& \mathcal{S}_{\tau} & 
\end{tikzcd}.
\end{equation}

We next study Hamiltonian reductions of the form
$$\overline{X}_{\tau}:=(X\times(\overline{G\times\mathcal{S}_{\tau}}))\sslash G,$$
where $X$ is a Hamiltonian $G$-variety and $\tau$ is an $\mathfrak{sl}_2$-triple in $\g$. The special case $\tau=0$ features prominently in our analysis, and we write $\overline{X}$ for $\overline{X}_{\tau}$ if $\tau=0$. This amounts to setting
$$\overline{X}:=(X\times T^*\overline{G}(\log D))\sslash G,$$ with $G$ acting as $G=G\times\{e\}\subseteq G\times G$ on $T^*\overline{G}(\log D)$.

One has the preliminary issue of whether the quotients $\overline{X}_{\tau}$ and $\overline{X}$ exist. The following result provides some sufficient conditions.

\begin{lem}
Let $X$ be a Hamiltonian $G$-variety.
\begin{itemize}
\item[(i)] If $X$ is a principal $G$-bundle, then $\overline{X}$ exists as a geometric quotient.
\item[(ii)] If $\overline{X}$ exists as a geometric quotient, then $\overline{X}_{\tau}$ exists as a geometric quotient for all $\mathfrak{sl}_2$-triples $\tau$ in $\g$.
\item[(iii)] If $X=T^*Y$ for an irreducible smooth principal $G$-bundle $Y$, then $\overline{X}_{\tau}$ exists as a geometric quotient for all $\mathfrak{sl}_2$-triples $\tau$ in $\g$. 
\end{itemize}
\end{lem}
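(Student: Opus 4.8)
The plan is to prove (i) directly by a freeness argument, to deduce (ii) by exhibiting $\overline{X}_\tau$ as the quotient of a closed invariant subvariety of the variety producing $\overline{X}$, and to obtain (iii) as a formal consequence of (i) and (ii). For (i), the essential point is that the diagonal $G$-action on $X\times T^*\overline{G}(\log D)$ is free and locally trivial in the \'etale topology, because it already is on the factor $X$. Hence the geometric quotient $(X\times T^*\overline{G}(\log D))/G$ exists: it is the associated bundle over $X/G$ with fibre $T^*\overline{G}(\log D)$, obtained by descending along an \'etale trivialisation of $X\to X/G$ — over a chart on which $X\cong U\times G$, the diagonal quotient of $U\times G\times T^*\overline{G}(\log D)$ is $U\times T^*\overline{G}(\log D)$ via $(u,g,p)\mapsto(u,g^{-1}\cdot p)$. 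By definition $\overline{X}$ is the quotient by $G$ of the zero fibre $Z$ of the moment map for this diagonal action; $Z$ is closed and $G$-invariant, and the restriction of a principal $G$-bundle to a closed invariant subvariety is again a principal $G$-bundle (it is the pullback of the bundle along the closed immersion of its image into the base). Therefore $\overline{X}=Z/G$ exists as a geometric quotient (it is empty if $Z$ is, which is harmless).

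For (ii), write $q\colon Z\to\overline{X}$ for the quotient map and recall that $\overline{G\times\mathcal{S}_\tau}=\overline{\rho}_R^{-1}(\mathcal{S}_\tau)$ carries the residual Hamiltonian $G$-action with moment map $\overline{\rho}_\tau=\overline{\rho}_L\big\vert_{\overline{G\times\mathcal{S}_\tau}}$. Unwinding the definition of Hamiltonian reduction, $\overline{X}_\tau=(X\times\overline{G\times\mathcal{S}_\tau})\sslash G$ is the quotient by $G$ of
$$N:=Z\cap\bigl(X\times\overline{\rho}_R^{-1}(\mathcal{S}_\tau)\bigr);$$
since $\mathcal{S}_\tau=\xi+\g_\eta$ is closed in $\g$, the subset $N$ is a closed $G$-invariant subvariety of $Z$, and $q(N)$ is canonically the preimage of $\mathcal{S}_\tau$ under the residual moment map $\overline{X}\to\g$ induced by $\overline{\rho}_R$ (so $\overline{X}_\tau$ is the Poisson slice of $\overline{X}$ for its residual $G$-action). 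It now suffices to invoke the standard fact that geometric quotients restrict to closed invariant subvarieties: because $N$ is closed and $G$-stable and $q$ is submersive with orbit-fibres, $q^{-1}(q(N))=N$, so $q(N)$ is closed in $\overline{X}$ and $q\big\vert_N\colon N\to q(N)$ is again a geometric quotient. Hence $\overline{X}_\tau\cong q(N)$ exists as a geometric quotient, for every $\mathfrak{sl}_2$-triple $\tau$.

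For (iii), note that $T^*Y$ is a principal $G$-bundle whenever $Y$ is: the cotangent lift of a free, \'etale-locally trivial action is again free and \'etale-locally trivial — over a chart $U\times G\cong Y$ one has $T^*Y\cong T^*U\times T^*G$ with $G$ acting by left translations on $T^*G\cong G\times\g$, so $T^*Y/G$ exists. Thus $X=T^*Y$ satisfies the hypothesis of (i), which yields $\overline{X}$ as a geometric quotient, and then (ii) yields $\overline{X}_\tau$ for all $\mathfrak{sl}_2$-triples $\tau$; irreducibility of $Y$ is not needed for existence, but ensures $T^*Y$ is irreducible and is recorded for later use. The only step requiring genuine care is the bookkeeping in (ii) — correctly matching the diagonal moment map, the induced moment map on $\overline{X}$, and the defining equations of $\overline{G\times\mathcal{S}_\tau}$, together with the descent of the geometric quotient along $N\hookrightarrow Z$; parts (i) and (iii) are then essentially formal.
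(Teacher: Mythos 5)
Your overall architecture --- prove (i) directly, deduce (ii) by restricting the geometric quotient to a closed invariant subvariety, and obtain (iii) by combining the two --- matches the paper's proof (its chain of implications $\mathrm{(I)}\Rightarrow\mathrm{(II)}\Rightarrow\mathrm{(III)}\Rightarrow\mathrm{(IV)}$ together with the corollary on $X=T^*Y$). Your part (ii) is essentially the paper's argument: $\overline{\mu}_\tau^{-1}(0)$ is a closed $G$-invariant subvariety of $\overline{\mu}_L^{-1}(0)$, and geometric quotients restrict to invariant locally closed subvarieties. Note, though, that your one-line justification (``$q$ is submersive with orbit-fibres, so $q^{-1}(q(N))=N$ and $q\vert_N$ is again a geometric quotient'') only addresses the topological part of the definition; the paper proves the restriction statement carefully, using base change of affine morphisms and the affine case, and you are implicitly relying on that result.

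The genuine gap is in (i). You assert that, because the diagonal action on $X\times T^*\overline{G}(\log D)$ is free and \'etale-locally trivial, the quotient exists as ``the associated bundle over $X/G$ with fibre $T^*\overline{G}(\log D)$, obtained by descending along an \'etale trivialisation.'' In the algebraic category this step is not automatic: a principal $G$-bundle in the sense used here is only \'etale-locally (not Zariski-locally) trivial, and \'etale descent of varieties is not effective in general, so the associated bundle $X\times^G F$ for an arbitrary $G$-variety $F$ need exist only as an algebraic space. The missing ingredient is precisely that $\overline{G}$ (hence $T^*\overline{G}(\log D)\subseteq\overline{G}\times(\g\oplus\g)$) is $G_L$-quasi-projective, i.e.\ admits a $G_L$-equivariant locally closed immersion into $\mathbb{P}(V)$ for a finite-dimensional $G_L$-module $V$; the paper supplies this via the $(G\times G)$-equivariant Pl\"ucker embedding $\mathrm{Gr}(n,\g\oplus\g)\longrightarrow\mathbb{P}(\Lambda^n(\g\oplus\g))$ and then invokes Brion's associated-bundle existence results. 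Without some such linearization or quasi-projectivity input, your argument would ``prove'' that associated bundles with arbitrary fibres always exist as varieties, which is false. The same issue lurks in your (iii), where you claim $T^*Y$ is a principal bundle by cotangent-lifting an \'etale trivialization; there the argument can be repaired because $T^*Y\longrightarrow Y$ is affine and descent is effective for affine morphisms --- which is essentially why the paper instead quotes the Drezet--Narasimhan theorem to get a good quotient of $T^*Y$ and upgrades it to a principal bundle via its lemma on free actions with good quotients.
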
 

The variety $\overline{X}_{\tau}$ enjoys certain Poisson-geometric features. A first step in this direction is to set
$$\overline{X}_{\tau}^{\circ}:=(X\times(\overline{G\times\mathcal{S}_{\tau}}))^{\circ}\sslash G,$$ where $(X\times\overline{G\times\mathcal{S}_{\tau}})^{\circ}$ is the open set of points in $(X\times(\overline{G\times\mathcal{S}_{\tau}}))^{\circ}$ with trivial $G$-stabilizers. The variety $\overline{X}_{\tau}^{\circ}$ exists as a geometric quotient if $\overline{X}$ exists as a geometric quotient, in which case one has inclusions
$$X_{\tau}\subseteq\overline{X}_{\tau}^{\circ}\subseteq\overline{X}_{\tau}$$ 
\begin{thm}\label{Theorem: many}
Let $X$ be a Hamiltonian $G$-variety, and suppose that $\tau$ is an $\mathfrak{sl}_2$-triple in $\g$. Assume that $\overline{X}_{\tau}$ exists as a geometric quotient.
\begin{itemize}
\item[(i)] The coordinate ring $\mathbb{C}[\overline{X}_{\tau}]$ carries a natural Poisson bracket for which restriction $\mathbb{C}[\overline{X}_{\tau}]\longrightarrow\mathbb{C}[X_{\tau}]$ is a Poisson algebra morphism.
\item[(ii)] The variety $\overline{X}_{\tau}^{\circ}$ is smooth and Poisson, and it contains $X_{\tau}$ as an open Poisson subvariety.
\item[(iii)] If $X$ is symplectic, then each irreducible component of $\overline{X}_{\tau}^{\circ}$ is log symplectic.
\item[(iv)] If $X$ is symplectic and $X_{\tau}$ is irreducible, then $X_{\tau}$ is the open dense symplectic leaf in a unique irreducible component of $\overline{X}_{\tau}^{\circ}$.    
\end{itemize}
\end{thm}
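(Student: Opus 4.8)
The plan is to realize $\overline{X}_\tau^{\circ}$ as a Hamiltonian reduction of a log symplectic product and to track symplectic leaves through that reduction. Write $M:=X\times(\overline{G\times\mathcal{S}_\tau})$; this is log symplectic, since $X$ is symplectic and $\overline{G\times\mathcal{S}_\tau}$ is log symplectic by Theorem~\ref{Theorem: triple}(i). By Theorem~\ref{Theorem: triple}(ii) the degeneracy divisor of $\overline{G\times\mathcal{S}_\tau}$ is $\partial:=\overline{G\times\mathcal{S}_\tau}\setminus(G\times\mathcal{S}_\tau)$, and because the degeneracy locus of a product is the union of the products of each factor with the other's degeneracy locus, and $X$ has empty degeneracy locus, the degeneracy divisor of $M$ is $\mathcal{D}_M=X\times\partial$; in particular the open dense symplectic leaf of $M$ is $X\times(G\times\mathcal{S}_\tau)$. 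Let $\mu:M\to\g$ be the moment map for the residual diagonal $G$-action, so that $\overline{X}_\tau^{\circ}=(\mu^{-1}(0)\cap M^{\circ})/G$ with $M^{\circ}$ the open locus of trivial $G$-stabilizers. Since $G$ acts freely on the factor $G\times\mathcal{S}_\tau$, it acts freely on $X\times(G\times\mathcal{S}_\tau)\subseteq M^{\circ}$, and by Proposition~\ref{Proposition: Slices}(ii) the reduction $(X\times(G\times\mathcal{S}_\tau))\sslash G$ is exactly the open Poisson subvariety $X_\tau\subseteq\overline{X}_\tau^{\circ}$ of Theorem~\ref{Theorem: many}(ii); in particular $\dim X_\tau=\dim M-2\dim G$.

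The argument then proceeds in three steps. First I would pin down the component. Since $\overline{X}_\tau^{\circ}$ is smooth by Theorem~\ref{Theorem: many}(ii), its irreducible components coincide with its connected components, so the irreducible (hence connected) open subvariety $X_\tau$ is contained in exactly one of them, say $C$, which is therefore the unique irreducible component of $\overline{X}_\tau^{\circ}$ meeting $X_\tau$; note $\dim C=\dim X_\tau$. By Theorem~\ref{Theorem: many}(iii), $C$ is log symplectic; let $L$ denote its open dense symplectic leaf. Second I would show $X_\tau\subseteq L$: by Theorem~\ref{Theorem: many}(ii) and Observation~\ref{Observation} the Poisson structure on $X_\tau$ is symplectic, so every point of $X_\tau$ has maximal rank $\dim C$ in $C$ and thus lies in $L$. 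Third, and this is the crux, I would show $C\setminus X_\tau$ lies in the degeneracy divisor $C\setminus L$. Given $q\in C\setminus X_\tau$, lift it to $p\in\mu^{-1}(0)\cap M^{\circ}$; then $p\notin X\times(G\times\mathcal{S}_\tau)$, i.e. $p\in\mathcal{D}_M$. Because $G$ is connected and acts by Hamiltonian vector fields, the $G$-orbit of $p$ lies in a single symplectic leaf $\Sigma$ of $M$, which is then $G$-invariant, contained in $\mathcal{D}_M$, and hence of even dimension $\le\dim M-2$. The $G$-action on $\Sigma$ is Hamiltonian with moment map $\mu\vert_\Sigma$, free on $\Sigma\cap M^{\circ}$; the reduction $\Sigma\sslash_0 G$ is smooth symplectic of dimension $\dim\Sigma-2\dim G$, and the tautological map $\Sigma\sslash_0 G\to\overline{X}_\tau^{\circ}$ realizes its image as a union of symplectic leaves of $\overline{X}_\tau^{\circ}$. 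Since $q$ lies in that image, the symplectic leaf of $\overline{X}_\tau^{\circ}$ through $q$ has dimension $\dim\Sigma-2\dim G\le\dim M-2-2\dim G<\dim M-2\dim G=\dim C=\dim L$, so $q\notin L$. Combining the second and third steps yields $X_\tau=L$; as $L$ is open and dense in the log symplectic variety $C$, and $C$ is the unique irreducible component containing $X_\tau$, this is the assertion.

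I expect the third step to be the main obstacle. One must verify carefully that, for the free Hamiltonian $G$-action at hand, $0$ is a regular value of $\mu\vert_\Sigma$ along the free locus, that $\Sigma\sslash_0 G$ is smooth symplectic of the expected dimension, and that the natural map $\Sigma\sslash_0 G\to\overline{X}_\tau^{\circ}$ is a Poisson immersion whose image is a union of symplectic leaves — the standard compatibility of Hamiltonian reduction with the symplectic foliation, but stated here for a not necessarily closed invariant leaf $\Sigma$ of a log symplectic variety. Much of the needed transversality and smoothness should already be available from (or run parallel to) the proof of Theorem~\ref{Theorem: many}(iii), so the work is chiefly in assembling those facts. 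A final, routine bookkeeping point is the identification of the Poisson structure that $X_\tau$ inherits as an open subvariety of $\overline{X}_\tau^{\circ}$ with the symplectic structure of Observation~\ref{Observation}; this is part of the content of Theorem~\ref{Theorem: many}(ii), which we may invoke.
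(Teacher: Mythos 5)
Your proposal only addresses part (iv): you explicitly invoke parts (ii) and (iii) of the very theorem under discussion (smoothness of $\overline{X}_{\tau}^{\circ}$, the open Poisson embedding of $X_{\tau}$, and the log symplectic structure on the components), and part (i) is not touched at all. As a proof of the full statement this is therefore incomplete. For the record, the paper establishes (i) by a diagram chase comparing the coordinate rings of the two Hamiltonian reductions, (ii) by applying the free-action reduction machinery of Subsection \ref{Subsection: Hamiltonian reduction} to $(X\times(\overline{G\times\mathcal{S}_{\tau}}))^{\circ}$ together with the open embedding $j_{\tau}$, and (iii)--(iv) in Theorem \ref{Theorem: nice} via \cite[Proposition 3.6]{BalibanuUniversal}. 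Since your argument for (iv) genuinely needs (iii) as input --- you use that $C$ has a unique open dense leaf whose complement is exactly the degeneracy locus --- that dependence cannot simply be deferred.

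For part (iv) itself your route is essentially sound and genuinely different from the paper's. The paper identifies the complement of $k_{\tau}(X_{\tau})$ in the relevant component as $\overline{\theta}_{\tau}^{\circ}(Y\cap D_{\tau})$, where $Y$ is the component of $(\overline{\mu}_{\tau}^{\circ})^{-1}(0)$ containing the image of $\mu_{\tau}^{-1}(0)$ and $D_{\tau}$ is the degeneracy divisor of $X\times(\overline{G\times\mathcal{S}_{\tau}})$; Balibanu's Proposition 3.6 then says this image is precisely the degeneracy divisor of the reduced structure, so its complement is the open dense leaf. You instead bound leaf dimensions: a point of $C\setminus X_{\tau}$ lifts to a point $p$ of $\mathcal{D}_M$, whose leaf $\Sigma$ has dimension at most $\dim M-2$, and the reduced leaf through $q=\pi(p)$ has dimension at most $\dim\Sigma-2\dim G<\dim C$, hence misses $L$. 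This works, and the step you flag as the main obstacle is in fact more elementary than the full ``reduction commutes with the symplectic foliation'' statement: the tangent space to the leaf of $\overline{X}_{\tau}^{\circ}$ through $q$ is $d\pi_p\bigl(P_p(T_p(Gp)^{\dagger})\bigr)$, which lies in $d\pi_p\bigl(T_p\Sigma\cap T_p\mu^{-1}(0)\bigr)$; freeness at $p$ makes $\mu\vert_{\Sigma}$ submersive there, so this space has dimension at most $\dim\Sigma-2\dim G$. You never need $\Sigma\sslash_0 G$ to exist as a well-behaved variety, only this pointwise linear-algebra bound, so the ``union of symplectic leaves'' apparatus can be dropped. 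The trade-off between the two approaches is that the paper's divisor-theoretic argument delivers (iii) and (iv) in one stroke, whereas yours takes (iii) as given; conversely, yours makes no appeal to Balibanu's result beyond what (iii) already requires.
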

 
Our final main result addresses the extent to which $\overline{X}_{\tau}$ partially compactifies $X_{\tau}$. We begin by assuming that both $\overline{X}_{\tau}$ and $X/G$ exist as geometric quotients. This allows us to construct canonical maps
$$\pi_{\tau}:X_{\tau}\longrightarrow X/G\quad\text{and}\quad\overline{\pi}_{\tau}:\overline{X}_{\tau}\longrightarrow X/G.$$ It is then straightforward to deduce that  
\begin{equation}\begin{tikzcd}\label{Equation: Partial compactification}
X_{\tau} \arrow{rr}{} \arrow[swap]{dr}{\pi_{\tau}}& & \overline{X}_{\tau} \arrow{dl}{\overline{\pi}_{\tau}}\\
& X/G & 
\end{tikzcd}\end{equation}
commutes, where the horizontal arrow is inclusion. This leads to the following theorem.

\begin{thm}\label{Theorem: Compactification theorem}
Let $X$ be a Hamiltonian $G$-variety, and suppose that $\tau$ is a principal $\mathfrak{sl}_2$-triple in $\g$. If $\overline{X}_{\tau}$ and $X/G$ exist as geometric quotients, then \eqref{Equation: Partial compactification} realizes $\overline{\pi}_{\tau}$ as a fibrewise compactification of $\pi_{\tau}$.
\end{thm}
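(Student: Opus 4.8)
The strategy is to reduce the claim to the fibrewise compactification property of $\overline{\rho}_\tau$ over $\g$ established in Theorem~\ref{Theorem: triple}(iv). The mechanism is an identification, for each point $p\in X/G$, of the fibre $\pi_\tau^{-1}(p)$ (resp.\ $\overline{\pi}_\tau^{-1}(p)$) with a quotient of the fibre $\rho_\tau^{-1}(y_0)$ (resp.\ $\overline{\rho}_\tau^{-1}(y_0)$) by one and the same group, for a suitable $y_0\in\g$; once this is in place the theorem follows formally.

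I would set things up as follows. Since $X/G$ is a geometric quotient, every fibre of $\pi\colon X\longrightarrow X/G$ is a single $G$-orbit; fix $p\in X/G$, choose $x_0\in\pi^{-1}(p)$, and put $y_0:=\nu(x_0)\in\g$ and $H:=\Stab_G(x_0)$. Equivariance of $\nu$ gives $H\subseteq Z_G(y_0)$, so $H$ preserves the fibres $\rho_\tau^{-1}(y_0)\subseteq G\times\mathcal{S}_\tau$ and $\overline{\rho}_\tau^{-1}(y_0)\subseteq\overline{G\times\mathcal{S}_\tau}$ via the residual $G$-actions. Using the isomorphism $X_\tau\cong(X\times(G\times\mathcal{S}_\tau))\sslash G$ of Proposition~\ref{Proposition: Slices}(ii), the definition $\overline{X}_\tau=(X\times(\overline{G\times\mathcal{S}_\tau}))\sslash G$, and the fact that $\pi_\tau$ and $\overline{\pi}_\tau$ each send a class $[(x,-)]$ to $\pi(x)$, I would verify that $z\longmapsto[(x_0,z)]$ induces isomorphisms
\[
\rho_\tau^{-1}(y_0)/H\;\xrightarrow{\ \sim\ }\;\pi_\tau^{-1}(p),\qquad\overline{\rho}_\tau^{-1}(y_0)/H\;\xrightarrow{\ \sim\ }\;\overline{\pi}_\tau^{-1}(p),
\]
compatibly with the inclusions $\rho_\tau^{-1}(y_0)\hookrightarrow\overline{\rho}_\tau^{-1}(y_0)$ and $X_\tau\hookrightarrow\overline{X}_\tau$. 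On underlying sets this is immediate: every point of $\overline{\pi}_\tau^{-1}(p)$ is $G$-equivalent to one of the form $(x_0,z)$ with $z\in\overline{\rho}_\tau^{-1}(y_0)$, and $(x_0,z)\sim(x_0,z')$ exactly when $z'\in Hz$. The scheme structure comes from restricting the geometric quotient $\overline{X}_\tau$ to the closed $G$-invariant subvariety $G\cdot(\{x_0\}\times\overline{\rho}_\tau^{-1}(y_0))$, which is induced from the $H$-variety $\overline{\rho}_\tau^{-1}(y_0)$; the argument for $X_\tau$ is identical.

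With this reduction in hand, I would invoke Theorem~\ref{Theorem: triple}(ii),(iv): since $\tau$ is principal, $\rho_\tau^{-1}(y_0)\hookrightarrow\overline{\rho}_\tau^{-1}(y_0)$ is an open dense immersion with complete target --- both sides empty when $y_0$ is not regular, in which case both fibres over $p$ vanish and there is nothing to check. It then only remains to see that these properties survive passage to $H$-quotients, which is soft. The quotient morphism $q\colon\overline{\rho}_\tau^{-1}(y_0)\longrightarrow\overline{\rho}_\tau^{-1}(y_0)/H$ is surjective, continuous and open. Surjectivity of $q$ together with completeness of its source forces $\overline{\pi}_\tau^{-1}(p)$ to be complete; openness of $q$ applied to the $H$-stable open set $\rho_\tau^{-1}(y_0)$ shows $\pi_\tau^{-1}(p)$ is open in $\overline{\pi}_\tau^{-1}(p)$; and continuity together with surjectivity of $q$ applied to the dense set $\rho_\tau^{-1}(y_0)$ shows it is dense. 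In particular the images of $\pi_\tau$ and $\overline{\pi}_\tau$ coincide, and since $p$ was arbitrary, \eqref{Equation: Partial compactification} exhibits $\overline{\pi}_\tau$ as a fibrewise compactification of $\pi_\tau$.

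The only non-formal step is the first: that restricting to a fibre of the base and then slicing at $\tau$ commute, i.e.\ that $\overline{\pi}_\tau^{-1}(p)\cong\overline{\rho}_\tau^{-1}(y_0)/H$ as \emph{varieties} (and similarly for $\pi_\tau$). The identification of underlying sets is routine, but upgrading it calls for care because the stabilizer $H$ need be neither reductive nor connected; this is precisely where the hypothesis that $\overline{X}_\tau$ exists as a geometric quotient earns its keep, since it lets one realize $\overline{\pi}_\tau^{-1}(p)$ as the geometric quotient of a closed $G$-invariant subvariety induced from $H$ and then conclude. Everything downstream of that is formal topology of quotient morphisms plus a direct appeal to Theorem~\ref{Theorem: triple}.
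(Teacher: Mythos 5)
Your core mechanism is the right one and is, at bottom, the same as the paper's: each fibre $\overline{\pi}_{\tau}^{-1}(p)$ is the image of the projective variety $\overline{\rho}_{\tau}^{-1}(\nu(x_0))\cong\{\gamma\in\overline{G}:(\nu(x_0),\nu(x_0)_{\tau})\in\gamma\}$ under the morphism $z\longmapsto[x_0:z]$, and the image of a complete variety under a morphism of varieties is complete. The paper stops exactly there: using \eqref{Equation: Image rho} and the fact that $\mathcal{S}_{\tau}$ is a section of $\chi$, it writes $\overline{\pi}_{\tau}^{-1}([x_0])$ as the image of the closed subvariety $\{\gamma\in\overline{G}:(\nu(x_0),\nu(x_0)_{\tau})\in\gamma\}$ of $\overline{G}$ and concludes. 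The extra layer you add --- upgrading the set-theoretic bijection $\overline{\pi}_{\tau}^{-1}(p)\cong\overline{\rho}_{\tau}^{-1}(y_0)/H$ to an isomorphism of varieties --- is not needed for completeness and is precisely where real work would be required: $H$ may be non-reductive and disconnected, so the existence of $\overline{\rho}_{\tau}^{-1}(y_0)/H$ as a geometric quotient is not automatic, and Proposition \ref{Proposition: Restriction of quotient} only hands you the quotient of the closed $G$-invariant set $G\cdot(\{x_0\}\times\overline{\rho}_{\tau}^{-1}(y_0))$ by $G$; identifying that with an honest $H$-quotient of the fibre is an additional argument you have not supplied. Since surjectivity of $z\longmapsto[x_0:z]$ onto the fibre is all that completeness requires, you should simply delete this step.

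Separately, one of your side-claims is false: $\overline{\rho}_{\tau}^{-1}(y_0)$ is \emph{never} empty. By \eqref{Equation: mu2Stau} it is $\{\gamma\in\overline{G}:(y_0,(y_0)_{\tau})\in\gamma\}$, and since $\chi(y_0)=\chi((y_0)_{\tau})$, equation \eqref{Equation: Image rho} shows this is nonempty for every $y_0\in\g$, regular or not; by contrast $\rho_{\tau}^{-1}(y_0)$ is empty for non-regular $y_0$ because $\mathcal{S}_{\tau}\subseteq\g^{\mathrm{r}}$. Consequently your assertions that ``the images of $\pi_{\tau}$ and $\overline{\pi}_{\tau}$ coincide'' and that $\pi_{\tau}^{-1}(p)$ is dense in $\overline{\pi}_{\tau}^{-1}(p)$ fail whenever $\nu(x_0)\notin\g^{\mathrm{r}}$. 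These claims are stronger than what the theorem asserts --- ``fibrewise compactification'' here, as in Remark \ref{Remark: Projective fibres} and the opening line of the paper's own proof, amounts to the commutative triangle with the open embedding $k_{\tau}$ on top together with projectivity of the fibres of $\overline{\pi}_{\tau}$ --- so the proof of the actual statement survives, but the density and image assertions should be removed.
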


In the case of a principal $\mathfrak{sl}_2$-triple $\tau$, we realize the fibrewise compactifications \eqref{Equation: Diagram1} and \eqref{Equation: Diagram2} as special instances of Theorem \ref{Theorem: Compactification theorem}.  

\subsection{Organization}
In Section \ref{Section: Preliminaries}, we introduce the concepts from Lie theory and Poisson geometry that form the foundation for our work. Section \ref{Section: Poisson slices} details the theory of Poisson slices and provides  complete proofs of Propositions \ref{Proposition: Slices} and \ref{Proposition: Slight generalization}. Section \ref{Section: Xbar} subsequently considers the Poisson slice enlargements $\overline{X}_{\tau}$ mentioned above, and it contains the proofs of Theorems \ref{Theorem: triple}, \ref{Theorem: many} and \ref{Theorem: Compactification theorem}. We conclude with Section \ref{Section: Examples}, which is devoted to illustrative examples. A list of recurring notation appears after Section \ref{Section: Examples}.

\subsection*{Acknowledgements}
The first author gratefully acknowledges the Natural Sciences and Engineering Research Council of Canada for support [PDF--516638].

\section{Preliminaries}
\label{Section: Preliminaries}

This section provides some of the notation, conventions, and basic results used throughout our paper. 

\subsection{Fundamental conventions}
We work exclusively over $\mathbb{C}$ and understand all group actions as being left group actions. We also write $\mathcal{O}_X$ for the structure sheaf of an algebraic variety $X$, as well as $\mathbb{C}[X]$ for the coordinate ring $\mathcal{O}_X(X)$. The dimension of $X$ is understood to be the supremum of the dimensions of the irreducible components. We understand $X$ to be smooth if $\dim(T_xX)=\dim(X)$ for all $x\in X$. Note that this convention forces a smooth variety to be pure-dimensional.
   
\subsection{Quotients of $K$-varieties}
Let $K$ be a linear algebraic group. We adopt the term \textit{$K$-variety} in reference to a variety $X$ endowed with an algebraic $K$-action.

\begin{defn}\label{Definition: GoodQuotient}
	Suppose that $X$ is a $K$-variety. A variety morphism $\pi:X\longrightarrow Y$ is called a \textit{categorical quotient} of $X$ if the following conditions are satisfied:
	\begin{itemize}
		\item[(i)] $\pi$ is $K$-invariant;
		\item[(ii)] if $\theta:X\longrightarrow Z$ is a $K$-invariant variety morphism, then there exists a unique morphism $\varphi:Y\longrightarrow Z$ for which
		$$\begin{tikzcd}
		& X \arrow{dl}[swap]{\pi} \arrow{dr}{\theta} \\
		Y \arrow{rr}[swap]{\varphi} && Z
		\end{tikzcd}$$
		commutes.
	\end{itemize}
\end{defn}

\begin{defn}
	Suppose that $X$ is a $K$-variety. A variety morphism $\pi:X\longrightarrow Y$ is called a \textit{good quotient} of $X$ if the following conditions are satisfied:
	\begin{itemize}
		\item[(i)] $\pi$ is surjective, affine, and $K$-invariant;
		\item[(ii)] if $U\subseteq Y$ is open, then the comorphism $\pi^*:\mathcal{O}_Y(U)\longrightarrow\mathcal{O}_X(\pi^{-1}(U))$ is an isomorphism onto $\mathcal{O}_X(\pi^{-1}(U))^K$;
		\item[(iii)] if $Z\subseteq X$ is closed and $K$-invariant, then $\pi(Z)$ is closed in $Y$;
		\item[(iv)] if $Z_1,Z_2\subseteq X$ are closed, $K$-invariant, and disjoint, then $\pi(Z_1)$ and $\pi(Z_2)$ are disjoint.	
	\end{itemize}
	One calls $\pi:X\longrightarrow Y$ a \textit{geometric quotient} of $X$ if $\pi$ is a good quotient and $\pi^{-1}(y)$ is a $K$-orbit for each $y\in Y$. 	
\end{defn}

Let $X$ be a $K$-variety admitting a geometric quotient $\pi:X\longrightarrow Y$, and write $X/K$ for the set of $K$-orbits in $X$. One then has a canonical bijection $Y\cong X/K$, through which $X/K$ inherits a variety structure. Any two geometric quotients $\pi:X\longrightarrow Y$ and $\pi':X\longrightarrow Y'$ induce the same variety structure on $X/K$, and this structure makes the set-theoretic quotient map $X\longrightarrow X/K$ a geometric quotient. With this in mind, we shall sometimes write ``$X/K$ exists" or ``the geometric quotient $X\longrightarrow X/K$ exists" to mean that $X$ admits a geometric quotient.

\begin{lem}\label{Lemma: Affine}
Assume that $K$ is connected and reductive. Suppose that $X$ is an affine $K$-variety, and let $\pi:X\longrightarrow Y$ be a variety morphism. If $\pi^{-1}(y)$ is a $K$-orbit for each $y\in Y$, then $\pi$ is a geometric quotient of $X$.
\end{lem}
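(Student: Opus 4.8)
The plan is to verify the four defining conditions of a good quotient directly, since the only missing hypothesis (relative to the standard GIT picture) is that $\pi$ is a surjective affine morphism whose fibres are $K$-orbits. First I would invoke the classical fact that an affine $K$-variety $X$ with $K$ connected reductive admits a good quotient $q:X\longrightarrow X/\!\!/K:=\mathrm{Spec}\,\mathbb{C}[X]^K$; this is Mumford's GIT for reductive groups, and $q$ is surjective, affine, $K$-invariant, takes disjoint closed invariant sets to disjoint closed sets, and separates closed orbits. The strategy is then to compare $\pi$ with $q$ and show $\pi$ is essentially $q$.

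The key steps, in order: (1) Because $\pi$ is $K$-invariant, the universal property of the affine GIT quotient produces a unique morphism $\varphi:X/\!\!/K\longrightarrow Y$ with $\pi=\varphi\circ q$. (2) Show $\varphi$ is bijective on points: surjectivity is immediate since $\pi$ is surjective; for injectivity, suppose $\varphi([x_1])=\varphi([x_2])=y$, so $x_1,x_2\in\pi^{-1}(y)$, which by hypothesis is a single $K$-orbit, hence $q(x_1)=q(x_2)$. Thus $\varphi$ is a bijection. (3) Upgrade $\varphi$ to an isomorphism. The point here is that $\pi^{-1}(y)$ being a single orbit forces that orbit to be closed in $X$ (it is the whole fibre of a morphism, hence closed), so every fibre of $q$ is a single closed orbit as well, i.e. $q$ is itself a geometric quotient; then $X/\!\!/K\cong X/K$ as varieties, and one must check $\varphi$ is an isomorphism of varieties, not merely a bijection. (4) Conclude: since $q$ is a good quotient with $\varphi$ an isomorphism, $\pi=\varphi\circ q$ is a good quotient, and since every fibre of $\pi$ is a $K$-orbit by hypothesis, $\pi$ is a geometric quotient.

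For step (3), the cleanest route is to avoid proving $\varphi$ is an isomorphism head-on and instead argue that $\pi$ itself satisfies the four good-quotient axioms: $\pi$ is surjective, affine (these are hypotheses — for ``affine'' note $\pi=\varphi\circ q$ with $q$ affine, and once $\varphi$ is finite/affine this follows; alternatively it may be cleanest to add affineness of $\pi$ as an explicit consequence), and $K$-invariant; axioms (iii) and (iv) for $\pi$ follow from the corresponding properties of $q$ together with surjectivity of $\varphi$, since $\pi(Z)=\varphi(q(Z))$ and the continuous bijection $\varphi$ carries the relevant closed/disjoint sets appropriately once we know $q(Z)$ is closed and $\varphi$ is a homeomorphism; axiom (ii), the identification of $\mathcal{O}_Y(U)$ with $\mathcal{O}_X(\pi^{-1}(U))^K$, follows from the same property of $q$ provided $\varphi^*$ is an isomorphism on functions, which is where the real content lies.

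The main obstacle is precisely making $\varphi$ (equivalently $\pi$) an isomorphism at the level of structure sheaves rather than merely a bijection of sets — a bijective morphism of varieties need not be an isomorphism (e.g. normalization of a cusp, or Frobenius, though the latter is not an issue in characteristic $0$). The way I expect to close this gap is to use that $q:X\longrightarrow X/\!\!/K$ is a geometric quotient (all fibres single closed orbits, as established above) and that geometric quotients are categorical, hence unique up to unique isomorphism; so it suffices to check that $\pi:X\longrightarrow Y$ is also a categorical quotient — which would then force $Y\cong X/\!\!/K$ compatibly. Showing $\pi$ is categorical in turn reduces to showing every $K$-invariant morphism $\theta:X\longrightarrow Z$ factors through $\pi$; since it factors through $q$ and $\varphi$ is a bijective morphism, one needs $\varphi$ to be an isomorphism — so this is somewhat circular and the honest resolution is that $Y$ being a variety and $\varphi$ being bijective and proper (it is quasi-finite, and one shows it is finite because $q$ is surjective with affine target) makes it finite birational onto a normal... — at this point the clean argument is: $q$ geometric $\Rightarrow$ $q$ is a quotient in the category of varieties; $\varphi\circ q=\pi$ with $\varphi$ bijective; descend the structure sheaf of $X/\!\!/K$ along $\varphi$ using that $\varphi_*\mathcal{O}$ and $\mathcal{O}_Y$ agree because functions on $Y$ pull back to $K$-invariant functions on $X$ (as $\pi$ is invariant) which are exactly functions on $X/\!\!/K$. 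I would present this last identification — $\mathcal{O}_Y(U)\xrightarrow{\ \sim\ }\mathcal{O}_X(\pi^{-1}U)^K=\mathcal{O}_X(q^{-1}\varphi^{-1}U)^K=\mathcal{O}_{X/\!\!/K}(\varphi^{-1}U)$ — as the crux, noting injectivity of $\pi^*$ on functions uses that $\pi$ (equivalently $\varphi$) is dominant and $Y$ is reduced, and surjectivity uses that every $K$-invariant regular function on $\pi^{-1}(U)$ is constant on fibres of $\pi$ hence descends set-theoretically, then descends as a morphism because $\varphi$ is a homeomorphism onto its image and $X/\!\!/K$ carries the quotient topology. This identification of sheaves is the technical heart; everything else is formal.
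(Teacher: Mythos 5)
Your strategy coincides with the paper's own proof: both factor $\pi=\varphi\circ q$ through the affine good quotient $q:X\longrightarrow \mathrm{Spec}(\mathbb{C}[X]^K)$ via its categorical property, and both prove $\varphi$ bijective by exactly the argument you give (surjectivity of $\pi$ for surjectivity of $\varphi$; the orbit--fibre hypothesis together with $K$-invariance of $q$ for injectivity). Up to that point your proposal is correct and is, in substance, the paper's argument.

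The gap is in your steps (3)--(4), and you have in fact located it yourself: a bijective morphism of varieties need not be an isomorphism, and your attempted repair does not close the circle. Surjectivity of $\mathcal{O}_Y(U)\longrightarrow\mathcal{O}_X(\pi^{-1}(U))^K$ is not a consequence of $\varphi$ being a continuous bijection, nor even of its being a homeomorphism (on curves every bijective morphism is a Zariski homeomorphism); it is literally equivalent to $\varphi^{-1}$ being a morphism, which is the statement under proof, so the ``descend the structure sheaf'' step is circular, as you half-concede. Worse, no argument of this shape can succeed, because the statement needs an extra hypothesis: let $K=\mathbb{C}^*$ act on the affine variety $X=\mathbb{C}^*\times\mathbb{A}^1$ by scaling the first factor, and let $\pi(s,t)=(t^2,t^3)\in C:=\{y^2=x^3\}$. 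Every fibre of $\pi$ is a single $K$-orbit, yet $\pi$ is not a good quotient, since $t\in\mathbb{C}[X]^K$ does not descend to a regular function on $C$; here $\varphi$ is precisely the (bijective, non-invertible) normalization $\mathbb{A}^1\longrightarrow C$. For what it is worth, the paper's own proof makes the same unjustified leap (``$\varphi$ is injective, implying that $\varphi$ is indeed an isomorphism''); in the paper's actual application the map $\pi$ is the restriction of a known geometric quotient to an invariant locally closed subvariety, and in that situation condition (ii) of a good quotient can be checked directly, which is what genuinely rescues the argument. To make your write-up correct you must either import such additional structure (e.g.\ assume $\pi^*$ identifies $\mathcal{O}_Y$ with $(\pi_*\mathcal{O}_X)^K$, or assume $Y$ normal and argue via Zariski's main theorem that the bijective, birational, finite $\varphi$ is an isomorphism) or restrict the lemma to the setting in which it is applied.
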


\begin{proof}
One knows that $X$ admits a good quotient $\theta:X\longrightarrow Z$ (e.g. \cite[Theorem 1.4.2.4]{Schmitt}), and that this quotient is also categorical (e.g. \cite[Lemma 1.4.1.1]{Schmitt}). It follows that \begin{equation}\label{Equation: Diagram}\begin{tikzcd}
	& X \arrow{dl}[swap]{\theta} \arrow{dr}{\pi} \\
	Z \arrow{rr}[swap]{\varphi} && Y
	\end{tikzcd}\end{equation}
	commutes for some morphism $\varphi:Z\longrightarrow Y$. The surjectivity of $\pi$ then forces $\varphi$ to be surjective. 
	
	We claim that $\varphi$ is an isomorphism, i.e. that $\varphi$ is also injective. To this end, let $z_1,z_2\in Z$ be such that $\varphi(z_1)=\varphi(z_2)$. Choose $x_1,x_2\in X$ satisfying $\theta(x_1)=z_1$ and $\theta(x_2)=z_2$. One then has $\pi(x_1)=\pi(x_2)$, so that $x_1$ and $x_2$ belong to the same $K$-orbit in $X$. The $K$-invariance of $\theta$ now implies that $\theta(x_1)=z_1$ and $\theta(x_2)=z_2$ must coincide. We conclude that $\varphi$ is injective, implying that $\varphi$ is indeed an isomorphism. This combines with the commutativity of \eqref{Equation: Diagram} to tell us that $\pi$ is a good quotient. Since $\pi^{-1}(y)$ is a $K$-orbit for all $y\in Y$, one deduces that $\pi$ is a geometric quotient. 
\end{proof}

\begin{prop}\label{Proposition: Restriction of quotient}
Assume that $K$ is connected and reductive. Suppose that $X$ is a $K$-variety admitting a geometric quotient $\pi:X\longrightarrow Y$. If $Z\subseteq X$ is a $K$-invariant locally closed subvariety, then $\pi(Z)$ is locally closed in $Y$ and 
$$\pi\big\vert_Z:Z\longrightarrow\pi(Z)$$ is a geometric quotient of $Z$.
\end{prop}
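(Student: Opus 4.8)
The plan is to reduce the statement about a locally closed subvariety $Z$ to two cases, closed and open, and handle each using the already-established Lemma \ref{Lemma: Affine} together with the defining properties of a good quotient. I would first record the elementary fact that a geometric quotient restricts well over open subsets of the base: if $V\subseteq Y$ is open, then $\pi^{-1}(V)\longrightarrow V$ is again a geometric quotient (this is immediate from the sheaf-theoretic condition (ii) in the definition of good quotient, surjectivity and affineness being local on the base, and the fibre condition being unchanged). Dually, I would note that if $Z\subseteq X$ is closed and $K$-invariant, then $\pi(Z)$ is closed in $Y$ by property (iii) of a good quotient, and $\pi|_Z\colon Z\longrightarrow\pi(Z)$ is surjective with each fibre a single $K$-orbit (the fibres of $\pi|_Z$ being exactly the fibres of $\pi$ that meet $Z$, hence $K$-orbits contained in $Z$ by invariance).

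Next I would treat the closed case. Given $Z\subseteq X$ closed and $K$-invariant, I want to show $\pi|_Z\colon Z\longrightarrow \pi(Z)$ is a geometric quotient. The natural route is to localize to the affine situation and invoke Lemma \ref{Lemma: Affine}. Since $\pi$ is affine, $Y$ has an open cover by affines $V_i$ with $\pi^{-1}(V_i)$ affine; then $Z\cap\pi^{-1}(V_i)$ is closed in an affine variety, hence affine, it is $K$-invariant, and $\pi|_{Z\cap\pi^{-1}(V_i)}$ maps onto $\pi(Z)\cap V_i$ with every fibre a $K$-orbit. By Lemma \ref{Lemma: Affine} (applicable since $K$ is connected reductive and $Z\cap\pi^{-1}(V_i)$ is affine), $\pi|_{Z\cap\pi^{-1}(V_i)}$ is a geometric quotient onto its image, which is therefore the variety $\pi(Z)\cap V_i$. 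Being a geometric quotient is local on the base, so glueing over the cover $\{V_i\}$ gives that $\pi|_Z\colon Z\longrightarrow\pi(Z)$ is a geometric quotient.

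Finally I would assemble the general locally closed case. Write $Z=\overline{Z}\cap U$ with $\overline{Z}$ the closure of $Z$ in $X$ and $U\subseteq X$ open; replacing $U$ by $\bigcap_{g\in K} gU$ if necessary—or more carefully, using that $Z$ is $K$-invariant so $\overline{Z}$ is $K$-invariant and $Z$ is open in $\overline{Z}$—we may take $Z$ to be an open $K$-invariant subset of the closed $K$-invariant subvariety $\overline{Z}$. By the closed case, $\pi|_{\overline{Z}}\colon\overline{Z}\longrightarrow\pi(\overline{Z})$ is a geometric quotient and $\pi(\overline{Z})$ is closed in $Y$. Now $Z$ is open and $K$-invariant in $\overline{Z}$; since $\pi|_{\overline{Z}}$ is a geometric quotient, its fibres are $K$-orbits, so $Z=(\pi|_{\overline{Z}})^{-1}(W)$ for $W:=\pi(Z)$, and $W$ is open in $\pi(\overline{Z})$ (a geometric quotient is in particular an open map on $K$-invariant opens, or: $\pi|_{\overline{Z}}$ is a quotient map in the Zariski topology by properties (iii)–(iv)). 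Hence $\pi(Z)=W$ is open in the closed set $\pi(\overline{Z})$, i.e. locally closed in $Y$, and by the open-restriction fact applied to $\pi|_{\overline{Z}}$ we conclude $\pi|_Z\colon Z\longrightarrow\pi(Z)$ is a geometric quotient.

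The main obstacle I anticipate is the bookkeeping around the closure step: verifying that $Z$ is genuinely open in its closure $\overline{Z}$ inside $X$ (true since $Z$ is locally closed) and that $\overline{Z}$ is $K$-invariant (true since the $K$-action is by homeomorphisms and $Z$ is $K$-invariant), and then checking that $Z$ is saturated with respect to $\pi|_{\overline{Z}}$—that is, $Z$ is a union of fibres of $\pi|_{\overline{Z}}$. This saturation is where one really uses that $\pi$ is a \emph{geometric} quotient rather than merely a good one: the fibres are single $K$-orbits, so any $K$-invariant subset is automatically a union of fibres. Once saturation is in hand, the openness of $\pi(Z)$ in $\pi(\overline{Z})$ and the geometric-quotient property of $\pi|_Z$ both follow formally from the closed case plus the open-restriction fact, so the argument is essentially complete.
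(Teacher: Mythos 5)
Your proof is correct, and it rests on the same core reduction as the paper's: pass to an affine cover of the base and invoke Lemma \ref{Lemma: Affine}, using throughout that a $K$-invariant subset is automatically saturated because the fibres of a geometric quotient are single orbits. The organization differs in two minor but genuine ways. First, you factor the argument through an intermediate closed case and then restrict over the open saturated subset $Z\subseteq\overline{Z}$, whereas the paper treats $Z$ in one step by observing that $\pi\big\vert_Z$ is the base change of $\pi$ along the locally closed immersion $\pi(Z)\hookrightarrow Y$ (saturation again), so that affineness of $\pi\big\vert_Z$ is immediate and the affine cover can be taken on $\pi(Z)$ directly. Second, for the local closedness of $\pi(Z)$ the paper applies an open-mapping property of quotient maps (\cite[Lemma 25.3.2]{Tauvel}) to $\pi\big\vert_{\overline{Z}}$, while you derive openness of $\pi(Z)$ in $\pi(\overline{Z})$ from axioms (iii)--(iv) of a good quotient together with saturation; your route is slightly more self-contained, the paper's slightly shorter. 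The one step you should make fully explicit is the openness of $W=\pi(Z)$ in $\pi(\overline{Z})$, where your parenthetical justification is a bit loose: the clean argument is that $\overline{Z}\setminus Z$ is closed and $K$-invariant, so $\pi(\overline{Z}\setminus Z)$ is closed by (iii), and by saturation it is exactly the complement of $\pi(Z)$ in $\pi(\overline{Z})$.
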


\begin{proof}
We begin by showing $\pi(Z)$ to be locally closed in $Y$. To this end, note that the closure $\overline{Z}\subseteq X$ is $K$-invariant. It follows that $\pi(\overline{Z})$ is a closed subvariety of $Y$. An application of \cite[Lemma 25.3.2]{Tauvel} now forces $$\pi\big\vert_{\overline{Z}}:\overline{Z}\longrightarrow\pi(\overline{Z})$$ to be an open map. Since $Z$ is open in $\overline{Z}$, we conclude that $\pi(Z)$ is open in the closed subvariety $\pi(\overline{Z})\subseteq Y$. This implies that $\pi(Z)$ is locally closed in $Y$.
	
	Now observe that $\pi\big\vert_Z:Z\longrightarrow\pi(Z)$ is the base change of $\pi$ under the locally closed immersion $\pi(Z)\hookrightarrow Y$. Since affine morphisms are stable under base change (e.g. \cite[Exercise 4.E.9(2)]{Patil}), it follows that $\pi\big\vert_{Z}$ is affine. We may therefore cover $\pi(Z)$ with affine open subsets $\{V_{\alpha}\}_{\alpha\in\mathcal{A}}$ in such a way that $$\{U_{\alpha}:=(\pi\big\vert_Z)^{-1}(V_{\alpha})\}_{\alpha\in\mathcal{A}}$$ is a cover of $Z$ by affine open subsets. By virtue of \cite[Exercise 1.4.1.2 ii)]{Schmitt}, we are reduced to verifying that each morphism
	$$\pi\big\vert_{U_{\alpha}}:U_{\alpha}\longrightarrow V_{\alpha}$$ is a geometric quotient. This reduction combines with Lemma \ref{Lemma: Affine} to complete the proof. 
\end{proof}

We will need the following algebro-geometric notion of a principal bundle appearing in \cite[Definition 2.3.1]{BrionHandbook}. 

\begin{defn}
Suppose that $X$ is a $K$-variety. A $K$-invariant variety morphism $\pi: X\longrightarrow Y$  is called a \emph{principal $K$-bundle} if the following conditions hold:
\begin{itemize}
\item[(i)] $\pi$ is faithfully flat, i.e. flat and surjective; 
\item[(ii)] the natural map $$\sigma: K\times X\longrightarrow X\times_{Y}X,\quad \sigma(k,x) = (x,k\cdot x)$$ is an isomorphism.
\end{itemize}
\end{defn}

A principal $K$-bundle is necessarily a geometric quotient (e.g. by \cite[Proposition 2.3.3]{BrionHandbook}). We understand ``$X$ is a principal $K$-bundle" as meaning that $X$ admits a geometric quotient $\pi:X\longrightarrow X/K$, and that $\pi$ is a principal $K$-bundle.

\begin{lem}\label{Lemma: FreeActionGbundle}
Let $X$ be a smooth $K$-variety with a free $K$-action and a good quotient $\pi\colon X\longrightarrow Y$. The map $\pi$ is then a principal $K$-bundle.
\end{lem}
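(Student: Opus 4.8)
The plan is to promote $\pi$ to a geometric quotient with bijective structure map $\sigma$, reduce to an affine situation, trivialise $\pi$ \'etale-locally on the base via Luna's slice theorem, and then read off the two defining properties of a principal bundle by \'etale descent. First I would exploit freeness: every orbit $Kx$ has dimension $\dim K$ and is open in its closure, so its boundary $\overline{Kx}\setminus Kx$ is a closed $K$-invariant subset of dimension $<\dim K$; being a union of orbits, all of which have dimension $\dim K$, it is empty, so every $K$-orbit is closed. Consequently each fibre $\pi^{-1}(y)$ is a union of closed orbits, and since a good quotient separates disjoint closed invariant subsets, $\pi^{-1}(y)$ is a single orbit. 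Thus $\pi$ is a geometric quotient. In particular, for $(x_1,x_2)\in X\times_Y X$ the points $x_1,x_2$ lie in a common orbit and freeness makes the $k\in K$ with $x_2=k\cdot x_1$ unique, so $\sigma\colon K\times X\to X\times_Y X$, $\sigma(k,x)=(x,k\cdot x)$, is bijective.

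Since $\pi$ is affine, I would then cover $Y$ by affine opens $V$; each restriction $\pi^{-1}(V)\to V$ is again a good quotient of a smooth $K$-variety with free action, and the conclusion is local on $Y$, so we may assume $X$ and $Y$ affine and $\pi$ the quotient $X\sslash K$ (from here the reductivity of $K$, which holds in every application in this paper, is used). For each $x\in X$ the stabiliser is trivial and $Kx$ is closed, so Luna's \'etale slice theorem yields a smooth locally closed slice $S\ni x$, an affine open $V\ni\pi(x)$, and a Cartesian square whose horizontal arrows $K\times S\to\pi^{-1}(V)$ and $S\to V$ are \'etale and whose left vertical arrow is the $K$-equivariant projection $K\times S\to S$. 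Shrinking $V$ to the (open) image of $S$ and letting $x$ vary, the maps $S\to Y$ form a jointly surjective \'etale family, i.e. an \'etale cover of $Y$ over which $\pi$ pulls back to the trivial bundle $K\times S\to S$. Now faithful flatness, smoothness of the base, and the property that $\sigma$ is an isomorphism are all local in the \'etale topology on $Y$; since they hold for $K\times S\to S$ --- where the pullback of $\sigma$ is the isomorphism $(k,(k',s))\mapsto((k',s),(kk',s))$ with inverse $((a,s),(b,s))\mapsto(ba^{-1},(a,s))$ --- it follows that $\pi$ is faithfully flat, that $Y$ is smooth, and that $\sigma$ is an isomorphism. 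Hence $\pi$ is a principal $K$-bundle.

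The step I expect to be the main obstacle is the construction of the \'etale-local trivialisation. For reductive $K$ this is exactly Luna's slice theorem in the case of a closed orbit with trivial stabiliser; the only points that require care are arranging that $S$ is smooth (using smoothness of $X$) and recording that the resulting slice square is Cartesian (``strongly \'etale''). If one wished to avoid the reductivity hypothesis, the substitute would be to establish faithful flatness of $\pi$ by hand --- for instance via ``miracle flatness'', once $Y$ is known to be smooth and the fibres are seen to be equidimensional of dimension $\dim K$ --- and then to descend the trivial torsor along $\pi$ itself using that $\sigma$ is an isomorphism; but proving smoothness of $Y$ without a slice is itself the delicate point.
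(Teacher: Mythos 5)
Your proof is correct, but it takes a genuinely different and substantially more detailed route than the paper's. The first half coincides in substance: both arguments deduce that all orbits are closed from freeness (you supply the dimension-of-the-boundary argument that the paper leaves implicit), invoke property (iv) of a good quotient to conclude that each fibre of $\pi$ is a single orbit, and observe that $\sigma$ is then bijective. Where you diverge is in how the two remaining assertions --- flatness of $\pi$ and the fact that the bijection $\sigma$ is an isomorphism of varieties --- are actually established. The paper simply asserts that flatness ``follows'' from smoothness of $X$ and freeness, and that $\sigma$ being an isomorphism ``follows immediately'' from bijectivity; neither assertion is a complete argument on its own (a bijective morphism need not be an isomorphism without some normality input, and miracle flatness needs regularity of $Y$, which is not yet known). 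You instead reduce to the affine case and apply Luna's \'etale slice theorem at a point with closed orbit and trivial stabiliser to trivialise $\pi$ \'etale-locally on $Y$, after which faithful flatness, smoothness of $Y$, and the isomorphy of $\sigma$ all follow by \'etale descent from the trivial bundle $K\times S\longrightarrow S$. This buys genuine rigour at the two places the paper is terse, at the cost of one extra hypothesis: Luna's theorem requires $K$ reductive, which is not in the statement of the lemma, though you flag this explicitly and it holds in every application in the paper (the lemma is only invoked with $K=G$ semisimple, in the proof of Corollary \ref{Corollary: Useful}). If you wanted a proof valid exactly under the stated hypotheses, you would need to carry out the alternative you sketch --- prove smoothness of $Y$ and equidimensionality of the fibres directly and then apply miracle flatness, and upgrade the bijection $\sigma$ to an isomorphism using that $K\times X$ and $X\times_Y X$ are smooth (hence normal) together with Zariski's main theorem --- but as written your argument is complete for reductive $K$ and fills in precisely the steps the paper glosses over.
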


\begin{proof}
Since the $K$-action is free, all $K$-orbits are closed in $X$. It now follows from Definition \ref{Definition: GoodQuotient}(iv) that $\pi$ separates $K$-orbits. Each fibre of $\pi$ is therefore a single $K$-orbit. This combines with the smoothness of $X$ and the freeness of the $K$-action to imply that $\pi$ is flat.  

It remains only to prove that 
$$\sigma: K\times X\longrightarrow X\times_{Y}X,\quad \sigma(k,x) = (x,k\cdot x)$$
is an isomorphism. This follows immediately from the freeness of the $K$-action and the fact that the fibres of $\pi$ are the $K$-orbits in $X$.
\end{proof}

\subsection{Poisson varieties}
Let $X$ be a smooth variety. Suppose that $P$ is a global section of $\Lambda^2(TX)$, and consider the bracket operation defined by
$$\{f_1,f_2\}:=P(df_1\wedge df_2)\in\mathcal{O}_X$$ for all $f_1,f_2\in\mathcal{O}_X$. One calls $P$ a \textit{Poisson bivector} if this bracket renders $\mathcal{O}_X$ a sheaf of Poisson algebras. We use the term \textit{Poisson variety} in reference to a smooth variety $X$ equipped with a Poisson bivector $P$. In this case, $\{\cdot,\cdot\}$ is called the \textit{Poisson bracket}. Let us also recall that a variety morphism $\phi: X_1\longrightarrow X_2$ between Poisson varieties $(X_1,P_1)$ and $(X_2,P_2)$ is called a \textit{Poisson morphism} if 
$$d\phi(P_1(\phi^*\alpha))= P_2(\alpha)$$
for all one-forms $\alpha$ defined on any open subset of $X_2$. Our convention is to have $(X_1\times X_2,P_1\oplus (-P_2))$ be the Poisson variety product of $(X_1,P_1)$ and $(X_2,P_2)$.

Let $(X,P)$ be a Poisson variety. Contracting the bivector with cotangent vectors allows one to view $P$ as a bundle morphism
$$P:T^*X\longrightarrow TX,$$ whose image is a holomorphic distribution on $X$. One refers to the maximal integral submanifolds of this distribution as the \textit{symplectic leaves} of $X$. The symplectic form $\omega_L$ on a symplectic leaf $L\subseteq X$ is constructed as follows. One defines the \textit{Hamiltonian vector field} of a locally defined function $f\in\mathcal{O}_X$ by
\begin{equation}\label{Equation: HamiltonianVectorField}
H_f := -P(df).
\end{equation} This gives rise to the tangent space description
$$T_xL=\{(H_f)_x:f\in\mathcal{O}_X\}$$
for all $x\in L$, and one has
$$(\omega_L)_x((H_{f_1})_x,(H_{f_2})_x)=\{f_1,f_2\}(x)$$ for all $x\in L$ and $f_1,f_2\in\mathcal{O}_X$ defined near $x$.

We conclude by discussing \textit{log symplectic varieties}, which have received considerable attention in recent years  (e.g. \cite{BalibanuUniversal,GuilleminIMRN,Goto,Radko,GMP,
GuilleminAdv,Cavalcanti,PymElliptic,Gualtieri,MarkusTorres1,
MarkusTorres2,GualtieriLi,GualtieriPym,PymSchedler}). To this end, one calls a Poisson variety $(X,P)$ \textit{log symplectic} if  the following conditions hold:
\begin{itemize}
\item[(i)] $(X,P)$ has a unique open dense symplectic leaf $X_{0}\subseteq X$;
\item[(ii)]  the vanishing locus of $P^n$ is a reduced normal crossing divisor $D\subseteq X$, where $2n=\dim(X_{0})$ and $P^n\in H^0(X,\Lambda^{2n}(TX))$ is the top exterior power of $P$.
\end{itemize}
In this case, we call $D$ the \textit{divisor} of $(X,P)$.  One immediate observation is that $D=X\setminus X_{0}$.

\begin{rem}
Since symplectic leaves are connected, Condition (i) implies that log symplectic varieties are irreducible.
\end{rem}
 
\subsection{Hamiltonian reduction}\label{Subsection: Hamiltonian reduction}

We now review the salient aspects of Hamiltonian actions in the Poisson category. To this end, let $K$ be a linear algebraic group with Lie algebra $\mathfrak{k}$.
Let $(X,P)$ be a Poisson variety, and assume that $X$ is also a $K$-variety. Each $y\in\mathfrak{k}$ then determines a \textit{fundamental vector field} $V_y$ on $X$ via
$$(V_y)_x=\frac{d}{dt}\bigg\vert_{t=0}(\exp(ty)\cdot x)\in T_xX$$
for all $x\in X$. The $K$-action on $X$ is called \textit{Hamiltonian} if $P$ is $K$-invariant and there exists a $K$-equivariant morphism $\nu:X\longrightarrow\mathfrak{k}^*$ satisfying the following condition:
\begin{equation}\label{Equation: MomentCondition}
H_{\nu^y}=-V_{y}
\end{equation}
 for all $y\in\mathfrak{k}$, where $\nu^y\in\mathbb{C}[X]$ is defined by
\begin{equation}\label{Equation: Defnuy}
\nu^y(x)= \nu(x)(y),\quad x\in X.
\end{equation}
One then refers to $\nu$ as a moment map and calls $(X,P,\nu)$ a \textit{Hamiltonian $K$-variety}. The moment map $\nu$ is known to be a Poisson morphism with respect to the Lie--Poisson structure on $\mathfrak{k}^*$ (e.g. \cite[Proposition 7.1]{Weinstein}).

We now briefly recall the process of Hamiltonian reduction for a Hamiltonian $K$-variety $(X,P,\nu)$. One begins by observing that $\nu^{-1}(0)$ is a $K$-invariant closed subvariety of $X$. Let us assume $X\sslash K$ exists, by which we mean that the geometric quotient
\begin{equation}\label{Equation: Geometric}\pi:\nu^{-1}(0)\longrightarrow\nu^{-1}(0)/K\end{equation} exists. Write
$$X\sslash K:=\nu^{-1}(0)/K,$$ and note that the comorphism $\pi^*:\mathbb{C}[X\sslash K]\longrightarrow\mathbb{C}[\nu^{-1}(0)]$ induces an algebra isomorphism
\begin{equation}\label{Equation: Coord}\mathbb{C}[X\sslash K]\overset{\cong}\longrightarrow\mathbb{C}[\nu^{-1}(0)]^K.\end{equation}
At the same time, the canonical surjection $\mathbb{C}[X]\longrightarrow\mathbb{C}[\nu^{-1}(0)]$ restricts to a surjection
\begin{equation}\label{Equation: Surjection}\mathbb{C}[X]^K\longrightarrow\mathbb{C}[\nu^{-1}(0)]^K\end{equation} if $K$ is connected and reductive. One also knows that $\mathbb{C}[X]^K$ is a Poisson subalgebra of $\mathbb{C}[X]$, and that the kernel of \eqref{Equation: Surjection} is a Poisson ideal $I\subseteq\mathbb{C}[X]^K$. It follows that $\mathbb{C}[\nu^{-1}(0)]^K$ inherits the structure of a Poisson algebra. One may therefore endow $\mathbb{C}[X\sslash K]$ with the unique Poisson bracket for which \eqref{Equation: Coord} is an isomorphism of Poisson algebras. We refer to the data of the variety $X\sslash K$ and the Poisson algebra $\mathbb{C}[X\sslash K]$ as the \textit{Hamiltonian reduction} of $(X,P,\nu)$ if \eqref{Equation: Geometric} exists and $K$ is connected and reductive.

The Hamiltonian reduction process will yield a richer geometric object in the presence of certain assumptions about the $K$-action on $X$. To this end, let $K$ be a linear algebraic group and suppose that $(X,P,\nu)$ is a Hamiltonian $K$-variety. Assume that the geometric quotient \eqref{Equation: Geometric} exists, and that $K$ acts freely on $\nu^{-1}(0)$. The closed subvariety $\nu^{-1}(0)\subseteq X$ is then smooth, and one deduces that $X\sslash K$ is also smooth. One may also define a Poisson bivector $P_{X\sslash K}$ on $X\sslash K$ as follows. Suppose that $x\in \nu^{-1}(0)$ and let 
$$d\pi_x^*:T_{\pi(x)}^*(X\sslash K)\longrightarrow T_x^*(\nu^{-1}(0))$$ 
be the dual of the differential $d\pi_x:T_x(\nu^{-1}(0))\longrightarrow T_{\pi(x)}(X\sslash K)$. Set
$$P_{\pi(x)}(\alpha):=d\pi_x(P_x(\tilde{\alpha}))$$ for all $\alpha\in T_{\pi(x)}^*(X\sslash K)$, where $\tilde{\alpha}\in T_x^*X$ is any element that annihilates $T_x(Kx)$ and coincides with $d\pi_x^*(\alpha)$ on $T_x(\nu^{-1}(0))$. The bivector $P_{X\sslash K}$ renders $\mathcal{O}_{X\sslash K}$ a sheaf of Poisson algebras, recovering the above-described Poisson bracket on $\mathbb{C}[X\sslash K]$. We call the Poisson variety $(X\sslash_\zeta K ,P_{X\sslash_\zeta K})$ the \textit{Hamiltonian reduction} of $(X,P,\nu)$ \textit{at level $\zeta$}, provided that \eqref{Equation: Geometric} exists and $K$ is known to act freely on $\nu^{-1}(\zeta)$.

The preceding construction generalizes to allow for Hamiltonian reduction at an arbitrary level $\zeta\in\mathfrak{k}^*$. To this end, let $K_{\zeta}$ denote the $K$-stabilizer of $\zeta$ with respect to the coadjoint action. One simply sets $$X\sslash_\zeta K := \nu^{-1}(\zeta)/K_\zeta$$ if the right-hand side exists as a geometric quotient. The definitions of the Poisson bracket on $\mathbb{C}[X\sslash_{\zeta} K]$ and Poisson bivector $P_{X\sslash_\zeta K}$ are analogous to their counterparts above. 

\subsection{Lie-theoretic conventions}\label{Subsection: Lie theoretic conventions}
Let $G$ be a connected semisimple linear algebraic group with Lie algebra $\g$. Note that $\g$ is a $G$-module via the adjoint representation 
$$\mathrm{Ad}:G\longrightarrow\operatorname{GL}(\g),\quad g\longrightarrow\mathrm{Ad}_g,$$ and a $\g$-module via the other adjoint representation
$$\mathrm{ad}:\g\longrightarrow\mathfrak{gl}(\g),\quad y\longrightarrow\mathrm{ad}_y=[y,\cdot].$$ 
One obtains an induced action of $G$ on the coordinate ring $\mathbb{C}[\g]=\mathrm{Sym}(\g^*)$, and we write $\mathbb{C}[\g]^G\subseteq\mathbb{C}[\g]$ for the subalgebra of all functions fixed by $G$. The inclusion $\mathbb{C}[\g]^G\subseteq\mathbb{C}[\g]$ then determines a morphism of affine varieties
$$\chi:\g\longrightarrow\mathrm{Spec}(\mathbb{C}[\g]^G),$$ often called the \textit{adjoint quotient}.

Define the centralizer subalgebra
$$\g_y:=\{z\in\g:[y,z]=0\}\subseteq\g$$ for each $y\in\g$. An element $y\in\g$ is called \textit{regular} if the dimension of $\g_y$ coincides with the rank of $\g$. The set of all regular elements is a $G$-invariant open dense subvariety of $\g$ that we denote by $\g^{\text{r}}$. 

Recall that $(\xi,h,\eta)\in\g^{\oplus 3}$ is an $\mathfrak{sl}_2$-triple if the identities
$$[\xi,\eta]=h,\quad [h,\xi]=2\xi,\quad\text{and}\quad [h,\eta]=-2\eta$$ 
hold in $\g$, and that the associated \textit{Slodowy slice} is defined by
$$\mathcal{S}_{\tau}:=\xi+\g_{\eta}\subseteq\g.$$ 
Now assume that $\tau$ is a \textit{principal} $\mathfrak{sl}_2$-triple, i.e. an $\mathfrak{sl}_2$-triple for which $\xi,h,\eta\in\mathfrak{g}^{\text{r}}$. The slice $\mathcal{S}_{\tau}$ then lies in $\g^{\text{r}}$ and is a fundamental domain for the $G$-action on $\g^{\text{r}}$ \cite[Theorem 8]{KostantLie}. This slice is also known to be a section of the adjoint quotient, meaning that the restriction
$$\chi\big\vert_{\mathcal{S}_{\tau}}:\mathcal{S}_{\tau}\longrightarrow\mathrm{Spec}(\mathbb{C}[\g]^G)$$ 
is a variety isomorphism \cite[Theorem 7]{KostantLie}. Let us write 
$$y_{\tau}:=(\chi\big\vert_{\mathcal{S}_{\tau}})^{-1}(\chi(y))\in\mathcal{S}_{\tau}$$ 
for each $y\in\g$. In other words, $y_{\tau}$ is the unique point at which $\mathcal{S}_{\tau}$ meets $\chi^{-1}(\chi(y))$.

Let $\langle\cdot,\cdot\rangle:\g\otimes_{\mathbb{C}}\g\longrightarrow\mathbb{C}$ denote the Killing form on $\g$. This bilinear form is non-degenerate and $G$-invariant, i.e. the map
\begin{equation}\label{Equation: Identification}\g\longrightarrow\g^*,\quad y\longrightarrow\langle y,\cdot\rangle
\end{equation} 
is a $G$-module isomorphism. The canonical Poisson structure on $\g^*$ thereby corresponds to a Poisson structure on $\g$, determined by the following condition:
$$\{f_1,f_2\}(y)=\langle y,[(df_1)_y,(df_2)_y]\rangle$$
for all $f_1,f_2\in\mathbb{C}[\g]$ and $y\in\g$, where the right-hand side uses \eqref{Equation: Identification} to regard $(df_1)_y,(df_2)_y\in\g^*$ as elements of $\g$. 
By means of \eqref{Equation: Identification}, we shall make no further distinction between $\g$ and $\g^*$. One also has the ($G\times G$)-module isomorphism
	$$\g\oplus\g\longrightarrow(\g\oplus\g)^*,\quad (x_1,x_2)\longrightarrow (\langle x_1,\cdot\rangle,-\langle x_2,\cdot\rangle),$$
	through which we shall identify $\g\oplus\g$ with $(\g\oplus\g)^*$.

\subsection{The wonderful compactification}
In this subsection, we assume that $G$ is the adjoint group of $\g$. 
Let $n=\dim\g$ and write $\mathrm{Gr}(n,\g\oplus\g)$ for the Grassmannian of all $n$-dimensional subspaces in $\g\oplus\g$. Note that $G\times G$ acts on $\mathrm{Gr}(n,\g\oplus\g)$ by
$$(g_1,g_2)\cdot\gamma:=\{(\mathrm{Ad}_{g_1}(y_1),\mathrm{Ad}_{g_2}(y_2)):(y_1,y_2)\in\gamma\},$$
and on $G$ itself by
$$(g_1,g_2)\cdot h:=g_1hg_2^{-1}.$$
Let $\mathfrak{g}_{\Delta}\subseteq\g\oplus\g$ denote the diagonally embedded copy of $\g$ in $\g\oplus\g$, and consider the ($G\times G$)-equivariant locally closed immersion
\begin{equation}\label{Equation: Loc}\varphi:G\longrightarrow\mathrm{Gr}(n,\g\oplus\g),\quad g\longrightarrow (g,e)\cdot\mathfrak{g}_{\Delta}.
\end{equation} 
We thereby view $G$ as a subvariety of $\mathrm{Gr}(n,\g\oplus\g)$ and write $\overline{G}$ for its closure in $\mathrm{Gr}(n,\g\oplus\g)$. The closed subvariety $\overline{G}$ is ($G\times G$)-invariant, smooth, and called the \textit{wonderful compactification} of $G$ \cite{DeConcini}. The complement $D:=\overline{G}\setminus G$ is known to be a normal crossing divisor in $\overline{G}$.

The pair $(G,D)$ determines a so-called \textit{log cotangent bundle} $T^*\overline{G}(\log D)\longrightarrow\overline{G}$. One may realize this vector bundle as the pullback of the tautological bundle $\mathcal{T}\longrightarrow\mathrm{Gr}(n,\g\oplus\g)$ along the inclusion $\overline{G}\hookrightarrow\mathrm{Gr}(n,\g\oplus\g)$. This amounts to setting
$$T^*\overline{G}(\log D):=\{(\gamma,(y_1,y_2))\in\overline{G}\times(\g\oplus\g):(y_1,y_2)\in\gamma\}$$ 
and defining the bundle projection to be
$$T^*\overline{G}(\log D)\longrightarrow\overline{G},\quad (\gamma,(y_1,y_2))\longrightarrow\gamma.$$ 
The action of ($G\times G$) on $\overline{G}$ then lifts to the following ($G\times G$)-action on $T^*\overline{G}(\log D)$:
\begin{equation}\label{Equation: Action on log cotangent}(g_1,g_2)\cdot (\gamma,(y_1,y_2)):=((g_1,g_2)\cdot\gamma,(\mathrm{Ad}_{g_1}(y_1),\mathrm{Ad}_{g_2}(y_2))).
\end{equation}

\subsection{Poisson geometry on $T^*G$ and $T^*\overline{G}(\log D)$}\label{Subsection: MomentLogCotangent}
Let all objects and notation be as set in \ref{Subsection: Lie theoretic conventions}. Note that the left trivialization and Killing form combine to yield a variety isomorphism 
$$T^*G\cong G\times\mathfrak{g}.$$  We shall thereby make no further distinction between $T^*G$ and $G\times\g$. The canonical symplectic form $\omega$ on $T^*G$ is then defined as follows on each tangent space $T_{(g,x)}(G\times\mathfrak{g})=T_gG\oplus\mathfrak{g}$:
$$\omega_{(g,x)}\bigg(\big((dL_g)_e(y_1),z_1\big),\big((dL_g)_e(y_2),z_2\big)\bigg)=\langle y_1,z_2\rangle-\langle y_2,z_1\rangle+\langle x,[y_1,y_2]\rangle$$
for all $y_1,y_2,z_1,z_2\in\mathfrak{g}$, where $L_g:G\longrightarrow G$ denotes left translation by $g$ and $(dL_g)_e:\mathfrak{g}\longrightarrow T_gG$ is the differential of $L_g$ at $e\in G$ \cite[Section 5, Equation (14L)]{Marsden}. 

Now consider the identifications
$$T_{(e,x)}(G\times\mathfrak{g})=\mathfrak{g}\oplus\mathfrak{g}\quad\text{and}\quad T_{(e,x)}^*(G\times\mathfrak{g})=(\mathfrak{g}\oplus\mathfrak{g})^*=\mathfrak{g}^*\oplus\mathfrak{g}^*$$ for each $x\in\mathfrak{g}$. Write $P_{\omega}$ for the Poisson bivector on $T^*G$ determined by $\omega$, noting that
$(P_{\omega})_{(e,x)}$ is a vector space isomorphism
$$(P_{\omega})_{(e,x)}:\mathfrak{g}^*\oplus\mathfrak{g}^*\overset{\cong}\longrightarrow\mathfrak{g}\oplus\mathfrak{g}$$ for each $x\in\mathfrak{g}$. To compute $(P_{\omega})_{(e,x)}$, let 
$$\kappa:\mathfrak{g}^*\overset{\cong}\longrightarrow\mathfrak{g}$$
denote the inverse of \eqref{Equation: Identification}. This leads to the following lemma, which will be needed later.

\begin{lem}\label{Lemma:PoissonT*G}
If $x\in\mathfrak{g}$, then
$$(P_\omega)_{(e,x)}(\alpha,\beta)  = (\kappa(\beta), [x,\kappa(\beta)]- \kappa(\alpha))$$
for all $(\alpha,\beta)\in \g^*\oplus\g^*$. 
\end{lem}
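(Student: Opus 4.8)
**Proof plan for Lemma \ref{Lemma:PoissonT*G}.**

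The plan is to compute the Poisson bivector $(P_\omega)_{(e,x)}$ directly from the explicit formula for $\omega_{(e,x)}$ by inverting the associated map $T_{(e,x)}(G\times\g)\to T_{(e,x)}^*(G\times\g)$. Recall that for a symplectic form $\omega$ one has $P_\omega = (\omega^\flat)^{-1}$, where $\omega^\flat\colon TX\to T^*X$ is the bundle map $v\mapsto \iota_v\omega$; so the task is purely linear-algebraic at the single point $(e,x)$. First I would write down $\omega_{(e,x)}^\flat$ using the stated formula: under the identification $T_{(e,x)}(G\times\g)=\g\oplus\g$ via $(y,z)\leftrightarrow\big((dL_e)_e(y),z\big)$, the formula gives, for $v=(y_1,z_1)$ and test vector $(y_2,z_2)$,
\begin{equation*}
\big(\omega_{(e,x)}^\flat(y_1,z_1)\big)(y_2,z_2) = \langle y_1,z_2\rangle - \langle z_1 - [x,y_1], y_2\rangle,
\end{equation*}
where I have rewritten $\langle y_2,z_1\rangle - \langle x,[y_1,y_2]\rangle = \langle z_1,y_2\rangle - \langle [y_1,x], y_2\rangle = \langle z_1 + [x,y_1], y_2\rangle$ using $G$-invariance (equivalently $\ad$-skewness) of $\langle\cdot,\cdot\rangle$; I will double-check the sign of the bracket term here, since this is exactly where an error would propagate. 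Under the identification $(\g\oplus\g)^*=\g^*\oplus\g^*$ and writing elements of $\g^*$ via $\kappa$, this says $\omega_{(e,x)}^\flat(y_1,z_1) = (\alpha,\beta)$ with $\kappa(\beta)=y_1$ and $\kappa(\alpha) = -(z_1-[x,y_1]) = [x,y_1] - z_1$, i.e. $\kappa(\alpha) = [x,\kappa(\beta)] - z_1$.

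Next I would simply invert this. From $\kappa(\beta)=y_1$ and $\kappa(\alpha)=[x,\kappa(\beta)]-z_1$ we solve $y_1 = \kappa(\beta)$ and $z_1 = [x,\kappa(\beta)] - \kappa(\alpha)$, so
\begin{equation*}
(P_\omega)_{(e,x)}(\alpha,\beta) = (\omega_{(e,x)}^\flat)^{-1}(\alpha,\beta) = \big(\kappa(\beta),\, [x,\kappa(\beta)] - \kappa(\alpha)\big),
\end{equation*}
which is exactly the claimed formula. To present this cleanly I would be explicit about the convention relating $P$ and $\omega$ (the one recorded in the excerpt: $(\omega_L)_x((H_{f_1})_x,(H_{f_2})_x)=\{f_1,f_2\}(x)$ together with $H_f=-P(df)$), and verify it is consistent with $P_\omega=(\omega^\flat)^{-1}$ rather than its negative; this sign bookkeeping, not any substantive computation, is the only real pitfall.

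The main obstacle I anticipate is therefore entirely a matter of sign and identification conventions: there are three places where a sign can flip — the convention $P=(\omega^\flat)^{-1}$ versus $-(\omega^\flat)^{-1}$, the $\ad$-invariance manipulation $\langle x,[y_1,y_2]\rangle = \langle [x,y_1],y_2\rangle = -\langle[y_1,x],y_2\rangle$, and the identification $(\g\oplus\g)^* \cong \g\oplus\g$ fixed in Subsection \ref{Subsection: Lie theoretic conventions} (which in the rightmost factor carries a minus sign in some of the paper's conventions, though for the single-group $T^*G$ here it is the plain Killing identification $\kappa$). I would pin all three down at the start and then the computation is a two-line inversion. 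No deeper input — no Lie theory beyond invariance of the Killing form, no properties of $G$ — is needed.
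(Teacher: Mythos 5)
Your proposal is correct and is essentially the paper's own proof: the paper likewise writes $(P_\omega)_{(e,x)}(\alpha,\beta)=(y,z)$, imposes $\alpha(v)+\beta(w)=\omega_{(e,x)}\big((y,z),(v,w)\big)$ for all $(v,w)\in\g\oplus\g$, and reads off $y=\kappa(\beta)$ and $z=[x,\kappa(\beta)]-\kappa(\alpha)$ --- i.e.\ it inverts $\omega^\flat$ at the point $(e,x)$ exactly as you do, under the same convention $P_\omega=(\omega^\flat)^{-1}$. One small caution on the sign you yourself flagged: your parenthetical chain $\langle z_1,y_2\rangle-\langle[y_1,x],y_2\rangle=\langle z_1+[x,y_1],y_2\rangle$ is inconsistent with your (correct) displayed formula $-\langle z_1-[x,y_1],y_2\rangle$, since invariance gives $\langle x,[y_1,y_2]\rangle=\langle[x,y_1],y_2\rangle=-\langle[y_1,x],y_2\rangle$; the displayed version is the right one, so your final answer stands.
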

\begin{proof}
Write $P_\omega (\alpha,\beta) = (y,z)\in \g\oplus\g$ and note that
\begin{align*}\alpha(v) + \beta(w) & = \omega_{(e,x)}((P_\omega)_{(e,x)}(\alpha,\beta),(v,w))\\ & = \omega_{(e,x)}((y,z), (v,w)) \\ & = \langle y,w\rangle - \langle z,v\rangle + \langle x, [y,v]\rangle \\ & = \langle y,w\rangle + \langle [x,y]-z,v\rangle.\end{align*}
for all $v,w\in\mathfrak{g}$. It follows that 
$\kappa(\alpha)=[x,y]-z$ and $\kappa(\beta)=y$, or equivalently
$$y=\kappa(\beta)\quad\text{and}\quad z = [x,\kappa(\beta)]-\kappa(\alpha).$$
\end{proof}

Now assume that $G$ is the adjoint group of $\g$. The variety $T^*\overline{G}(\log D)$ admits a distinguished log symplectic structure (e.g. \cite{BalibanuUniversal}), some aspects of which we now describe. We begin by noting that
\begin{equation}\label{Equation: Embed}\tilde{\varphi}:T^*G\longrightarrow T^*\overline{G}(\log D),\quad (g,x)\longrightarrow ((g,e)\cdot\mathfrak{g}_{\Delta},(\mathrm{Ad}_{g}(x),x)).\end{equation}
is a symplectomorphism onto the unique open dense symplectic leaf in $T^*\overline{G}(\log D)$. This yields the commutative diagram
$$\begin{tikzcd}
T^*G \arrow[r, "\tilde{\varphi}"] \arrow[d]
& T^*\overline{G}(\log D) \arrow[d] \\
G \arrow[r, "\varphi"]
& \overline{G}
\end{tikzcd},$$
where $\varphi:G\longrightarrow\overline{G}$ is the map \eqref{Equation: Loc}. One also observes $\tilde{\varphi}$ to be equivariant with respect to \eqref{Equation: Action on log cotangent} and the following ($G\times G$)-action on $T^*G$:
\begin{equation}\label{Equation: Action on cotangent}(g_1,g_2)\cdot (h,y):=(g_1hg_2^{-1},\mathrm{Ad}_g(y)).
\end{equation}
The ($G\times G$)-actions \eqref{Equation: Action on log cotangent} and \eqref{Equation: Action on cotangent} are Hamiltonian with respective moment maps      
\begin{equation}\label{Equation: Second moment}\overline{\rho}=(\overline{\rho}_L,\overline{\rho}_R):T^*\overline{G}(\log D)\longrightarrow\g\oplus\g,\quad (\gamma,(y_1,y_2))\longrightarrow (y_1,y_2)
\end{equation}
and
\begin{equation}\label{Equation: First moment}\rho=(\rho_L,\rho_R):T^*G\longrightarrow\g\oplus\g,\quad (g,y)\longrightarrow (\mathrm{Ad}_g(y),y).\end{equation}

Now suppose that $(X,P,\nu)$ is a Hamiltonian $G$-variety. Endow $X$ with the Hamiltonian ($G\times G$)-variety structure for which $$G_R:=\{e\}\times G$$ acts trivially and $$G_L:=G\times\{e\}$$ acts via the original Hamiltonian $G$-action and the identification $G=G_L$. It follows that the product Poisson varieties $X\times T^*G$ and $X\times T^*\overline{G}(\log D)$ are Hamiltonian ($G\times G$)-varieties with respective moment maps
\begin{equation}\label{Equation: muXT*G}
\mu=(\mu_L,\mu_R):X\times T^*G\longrightarrow\g\oplus\g,\quad (x,(g,y))\longrightarrow (\nu(x)-\mathrm{Ad}_g(y),-y)
\end{equation}
and
\begin{equation}\label{Equation: mubarXT*GlogD}\overline{\mu}=(\overline{\mu}_L,\overline{\mu}_R):X\times T^*\overline{G}(\log D)\longrightarrow\g\oplus\g,\quad (x,(\gamma,(y_1,y_2)))\longrightarrow (\nu(x)-y_1,-y_2).
\end{equation}
We also have a commutative diagram
\begin{equation}\begin{tikzcd}\label{Equation: Diag}
X\times T^*G \arrow{rr}{i} \arrow[swap]{dr}{\mu}& & X\times T^*\overline{G}(\log D) \arrow{dl}{\overline{\mu}}\\
& \mathfrak{g}\oplus\mathfrak{g} & 
\end{tikzcd},\end{equation}
where 
\begin{equation}\label{Equation: Equivariant embedding} i:X\times T^*G\longrightarrow X\times T^*\overline{G}(\log D),\quad (x,(g,y))\longrightarrow (x,((g,e)\cdot\g_{\Delta},(\mathrm{Ad}_g(y),y))).\end{equation}
is the ($G\times G$)-equivariant open Poisson embedding given by the product of \eqref{Equation: Embed} with the identity map $X\longrightarrow X$.

The Hamiltonian ($G\times G$)-variety $X\times T^*G$ warrants some further discussion. One knows that the geometric quotient
$$\mu_L^{-1}(0)\longrightarrow(X\times T^*G)\sslash G_L$$ exists, and that the action of $G_R$ on $\mu_L^{-1}(0)$ descends to a Hamiltonian action of $G$ on $(X\times T^*G)\sslash G_L$. An associated moment map is obtained by descending 
$$-\mu_R\big\vert_{\mu_L^{-1}(0)}:\mu_L^{-1}(0)\longrightarrow\g$$ 
to the quotient variety $(X\times T^*G)\sslash G_L$. It is then not difficult to verify that
\begin{equation}\label{Equation: Reduction isomorphism}\psi:X\overset{\cong}\longrightarrow(X\times T^*G)\sslash G_L,\quad x\longrightarrow [x:(e,\nu(x))],\quad x\in X\end{equation}
is an isomorphism of Hamiltonian $G$-varieties.

\section{Poisson slices}\label{Section: Poisson slices}
This section develops the general theory of Poisson slices. Some emphasis is placed on properties of the Poisson slice $G\times\mathcal{S}_{\tau}$ and a larger log symplectic variety $\overline{G\times\mathcal{S}_{\tau}}$.  

\subsection{Poisson transversals and Poisson slices}\label{Subsection: Poisson slices}
Let $(X,P)$ be a Poisson variety. Given $x\in X$ and a subspace $V\subseteq T_xX$, we write $V^{\dagger}$ for the annihilator of $V$ in $T_x^*X$. Our notation suppresses the dependence of $V^{\dagger}$ on $T_xX$, as the ambient tangent space will always be clear from context. We will use an analogous notation for vector subbundles of $TX$.

Recall that a smooth locally closed subvariety $Y\subseteq X$ is called a \textit{Poisson transversal} (or \textit{cosymplectic subvariety}) if 
\begin{equation}\label{Equation: PoissonTransversal}
TX|_Y = TY \oplus P(TY^\dagger).
\end{equation}
This has the following straightforward implication for every symplectic leaf $L\subseteq X$: $L$ and $Y$ have a transverse intersection in $X$, and $L\cap Y$ is a symplectic submanifold of $L$. 

The Poisson transversal $Y$ inherits a Poisson bivector $P_Y$ from $(X,P)$. To define it, note that the decomposition \eqref{Equation: PoissonTransversal} gives rise to an inclusion $T^*Y\subseteq T^*X$. One can verify that
$$P(T^*Y)\subseteq TY,$$ and $P_Y$ is then defined to be the restriction
$$P_Y:=P\big\vert_{T^*Y}:T^*Y\longrightarrow TY.$$ Note that $Y$ need not be a Poisson subvariety of $X$ in the usual sense; restricting functions need not define a morphism $\mathcal{O}_X\longrightarrow j_*\mathcal{O}_Y$ of sheaves of Poisson algebras, where $j:Y\hookrightarrow X$ is the inclusion. This is particularly apparent if $X$ is symplectic; the Poisson transversals are the symplectic subvarieties, while the Poisson subvarieties are the open subvarieties. 

We record the following well-known fact for future reference (cf. \cite[Example 4]{Frejlich}).

\begin{prop}\label{Proposition: Symplectic transversal}
Let $X$ be a symplectic variety. If $Y\subseteq X$ is a Poisson transversal, then $Y$ is a symplectic subvariety of $X$. The resulting symplectic structure on $Y$ coincides with the Poisson structure $Y$ inherits as a transversal. 
\end{prop}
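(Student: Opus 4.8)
The plan is to reduce the entire statement to linear algebra in a single tangent space, invoking the elementary fact that a subspace $W$ of a symplectic vector space $(V,\omega)$ is a \emph{symplectic subspace} --- meaning $\omega|_W$ is nondegenerate --- precisely when $V = W \oplus W^{\omega}$, where $W^{\omega} := \{v \in V : \omega(v,w) = 0 \text{ for all } w \in W\}$ is the $\omega$-orthogonal complement of $W$.

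First I would fix $y \in Y$ and set $V := T_yX$ and $W := T_yY$. Writing $\omega^{\flat}\colon TX \longrightarrow T^*X$, $v \mapsto \iota_v\omega$, for the bundle isomorphism induced by $\omega$, the Poisson bivector $P$ of $(X,\omega)$ is $\pm(\omega^{\flat})^{-1}$ for the sign convention fixed in the paper. A one-line check then gives $P(W^{\dagger}) = W^{\omega}$: indeed $v \in W^{\omega}$ iff $\iota_v\omega$ annihilates $W$ iff $\omega^{\flat}(v) \in W^{\dagger}$ iff $v \in P(W^{\dagger})$. Consequently the Poisson transversal condition $TX|_Y = TY \oplus P(TY^{\dagger})$ becomes, at $y$, the statement $V = W \oplus W^{\omega}$, and the recalled fact shows that $\omega|_{T_yY}$ is nondegenerate for every $y \in Y$. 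Since $\iota^*\omega$ is closed (for the inclusion $\iota\colon Y \hookrightarrow X$) and $Y$ is smooth, $\iota^*\omega$ is then a symplectic form, so $Y$ is a symplectic subvariety of $X$. (For this first assertion one could alternatively observe that the unique symplectic leaf of the symplectic variety $X$ is $X$ itself, so that the general property of Poisson transversals recalled just above already exhibits $Y = X \cap Y$ as a symplectic submanifold of $X$.)

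It remains to identify the Poisson bivector $P_Y$ that $Y$ inherits as a transversal with the Poisson bivector of $\iota^*\omega$. I would argue pointwise again. Dualizing $V = W \oplus W^{\omega}$ yields $V^* = (W^{\omega})^{\dagger} \oplus W^{\dagger}$, and by construction the embedding $T_y^*Y \hookrightarrow T_y^*X$ entering the definition of $P_Y$ identifies $T_y^*Y \cong W^*$ with the summand $(W^{\omega})^{\dagger}$, via restriction of covectors to $W$. Because $(W^{\omega})^{\omega} = W$ in a symplectic vector space, one has $(W^{\omega})^{\dagger} = \omega^{\flat}(W)$; so for $u \in W$ the covector $\omega^{\flat}(u)$ lies in $(W^{\omega})^{\dagger}$, restricts on $W$ to $\iota_u(\iota^*\omega)$, and has $P(\omega^{\flat}(u)) = \pm u \in W$. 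This verifies at once the inclusion $P(T^*Y) \subseteq TY$ used to define $P_Y$ and shows that, under the identification above, $P_Y$ is the Poisson bivector determined by the symplectic form $\iota^*\omega$, with the same $\pm$ sign appearing on $X$ and on $Y$. Hence the symplectic structure $\iota^*\omega$ on $Y$ coincides with the transversal Poisson structure, as claimed.

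The whole argument is formal. The only points demanding care are the bookkeeping of the dual splitting $V^* = (W^{\omega})^{\dagger} \oplus W^{\dagger}$ that underlies the inclusion $T^*Y \subseteq T^*X$, and keeping the sign convention that ties a symplectic form to its Poisson bivector consistent between $X$ and $Y$; I do not expect either to be a real obstacle.
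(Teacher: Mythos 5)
Your proof is correct. Note that the paper does not actually supply an argument for this proposition: it records it as a well-known fact and simply cites Frejlich--M\u{a}rcu\c{t}, so there is no in-paper proof to compare against. Your pointwise linear-algebra verification --- the identity $P(W^{\dagger})=W^{\omega}$ turning the transversal condition at $y$ into $T_yX=T_yY\oplus (T_yY)^{\omega}$, hence nondegeneracy of $\iota^*\omega$, followed by the identification $T_y^*Y\cong (W^{\omega})^{\dagger}=\omega^{\flat}(W)$ under which $P_Y$ is carried to the bivector of $\iota^*\omega$ with consistent sign --- is exactly the standard argument behind that citation, and it correctly establishes both the nondegeneracy claim and the agreement of the two Poisson structures on $Y$.
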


We need the following refinement in the case of log symplectic varieties.

\begin{prop}\label{Proposition: Poissonslicelogsymplectic}
	Suppose that $(X,P)$ is a log symplectic variety with divisor $Z$. Let $Y\subseteq X$ be an irreducible Poisson transversal, and write $P_{\emph{tr}}$ for the resulting Poisson bivector on $Y$.  The following statements hold.
	\begin{itemize}
		\item[(i)] The Poisson variety $(Y,P_{\emph{tr}})$ is log symplectic with divisor $Z\cap Y$. 
		\item[(ii)] If one equips $Y\setminus Z$ and $X\setminus Z$ with the symplectic structures inherited as symplectic leaves of $(Y,P_{\mathrm{tr}})$ and $(X,P)$, respectively, then $Y\setminus Z$ is a symplectic subvariety of $X\setminus Z$. 
	\end{itemize}     
\end{prop}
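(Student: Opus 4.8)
The plan is to combine Proposition~\ref{Proposition: Symplectic transversal} (which handles the symplectic case) with a local normal form for the log symplectic divisor, using the fact that the Poisson transversal $Y$ inherits more structure than a generic Poisson subvariety. First I would recall the pointwise meaning of the log symplectic condition: $(X,P)$ is log symplectic precisely when, writing $2n = \dim X$, the section $P^n \in H^0(X,\Lambda^{2n}TX)$ vanishes exactly along the reduced normal crossing divisor $Z$, and $X \setminus Z$ is the (necessarily connected, by irreducibility of $Y$ and the Remark) open dense symplectic leaf. Since $Y$ is a Poisson transversal, $Y \pitchfork L$ for every symplectic leaf $L$, and $L \cap Y$ is a symplectic submanifold of $L$; applying this to $L = X \setminus Z$ gives that $(Y \setminus Z, P_{\mathrm{tr}})$ is symplectic and is a symplectic subvariety of $X \setminus Z$, which already proves~(ii) and shows $Y \setminus Z$ is an open dense symplectic leaf of $(Y, P_{\mathrm{tr}})$ provided $Y \setminus Z$ is dense in $Y$.

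Next I would address the remaining points of~(i): that $Y \setminus Z$ is \emph{the unique} open dense symplectic leaf, that $Z \cap Y$ is precisely the degeneracy divisor of $P_{\mathrm{tr}}$, and that $Z \cap Y$ is a reduced normal crossing divisor in $Y$. For the normal crossing statement, the key tool is transversality: because $Y$ is a Poisson transversal it intersects each symplectic leaf transversally, and the strata of a normal crossing divisor $Z = \bigcup_i Z_i$ are unions of symplectic leaves (the degeneracy locus of a log symplectic structure is stratified by leaves). Hence $Y$ meets every stratum of $Z$ transversally, and transversality to all strata of a normal crossing divisor forces $Z \cap Y$ to again be a normal crossing divisor in $Y$ (locally, $Z$ is cut out by $z_1 \cdots z_k = 0$ in coordinates adapted to the leaf stratification, and $Y$ being transverse to each coordinate hyperplane and each intersection means the $z_i|_Y$ remain part of a coordinate system). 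In particular $Z \cap Y$ is reduced and of pure codimension one, so $\dim(Y \setminus Z) = \dim Y$ and $Y \setminus Z$ is dense, completing the argument that it is an open dense symplectic leaf.

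To finish~(i) I would compute the degeneracy locus of $P_{\mathrm{tr}}$ directly and check it equals $Z \cap Y$. Write $\dim Y = 2m$; one shows $P_{\mathrm{tr}}^m$ vanishes on $Z \cap Y$ and is nonvanishing off it. Nonvanishing off $Z \cap Y$ is immediate from the symplectic statement above. For vanishing along $Z \cap Y$, I would use the local normal form for log symplectic structures near a point of the smooth part of $Z$ (Darboux--Weinstein type coordinates in which $P = z_1 \partial_{z_1} \wedge \partial_{w_1} + \sum_{j\geq 2}\partial_{z_j}\wedge\partial_{w_j}$, with $Z = \{z_1 = 0\}$ locally): since $Y$ is transverse to the leaf through the given point, $Y$ can be put in a compatible coordinate form, and one reads off that the restricted bivector degenerates exactly where $z_1|_Y = 0$. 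Iterating over the normal crossing strata (or invoking the general statement that a Poisson transversal of a log symplectic manifold is log symplectic with divisor the restricted divisor, e.g. by the local models in the references) handles the deeper strata. Finally, uniqueness of the open dense symplectic leaf follows from the Remark in the excerpt together with irreducibility of $Y$: an open dense symplectic leaf, if it exists, is forced to be $Y$ minus the degeneracy divisor.

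I expect the main obstacle to be the normal-crossing claim for $Z \cap Y$: one must genuinely use that a Poisson transversal meets \emph{every} symplectic leaf transversally — not merely the open one — and then argue that transversality to the entire leaf-stratification of a normal crossing divisor is enough to preserve the normal crossing property after restriction. This is the step where a careful local model near a multiple-crossing point of $Z$ is needed, and where one must be sure the transversality hypothesis from~\eqref{Equation: PoissonTransversal} propagates to each closed stratum $Z_{i_1} \cap \cdots \cap Z_{i_k}$. Everything else is either a direct application of Proposition~\ref{Proposition: Symplectic transversal} or a pointwise bivector computation in Darboux--Weinstein coordinates.
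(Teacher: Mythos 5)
Your proposal is correct in substance and rests on the same key geometric input as the paper's proof: the strata $Z_I=\bigcap_{i\in I}Z_i$ of the normal crossing divisor are unions of symplectic leaves, a Poisson transversal meets every leaf transversally, hence $Y$ is transverse to every stratum, and Proposition \ref{Proposition: Symplectic transversal} applied to the open leaf $X\setminus Z$ handles part (ii). Where you diverge is in the endgame. The paper does not compute the degeneracy locus of $P_{\mathrm{tr}}$ at all: it uses the transversality-to-strata data to invoke Gualtieri's notion of a \emph{log symplectic subvariety} (Definition 7.16 of the cited reference), which hands it a log symplectic bivector $P_{\mathrm{log}}$ on $Y$ with divisor $Z\cap Y$ for free, and the remaining work is to show $P_{\mathrm{log}}=P_{\mathrm{tr}}$ --- which it does by observing that both agree with the pullback symplectic structure on the dense open set $Y\cap X_0$ and extending by density. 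Your route instead works with $P_{\mathrm{tr}}$ throughout and establishes the log symplectic property by a local Darboux--Weinstein computation of $P_{\mathrm{tr}}^m$ near the divisor; this avoids having to reconcile two a priori different bivectors, but it shifts the burden onto the holomorphic local normal form, which is delicate near deeper strata of $Z$ --- precisely the point at which you hedge by ``invoking the general statement \dots by the local models in the references,'' i.e.\ essentially the citation the paper makes. A small remark: reducedness of the vanishing of $P_{\mathrm{tr}}^m$ only needs to be checked at generic points of each component of $Z\cap Y$, which by your transversality argument lie over smooth points of $Z$, so the multiple-crossing local model you worry about in your last paragraph is not actually needed for that step; it is needed only for the normal crossing claim, where linear-algebraic transversality to each $Z_{i_1}\cap\cdots\cap Z_{i_k}$ suffices. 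Also, the connectedness of $X\setminus Z$ is automatic from its being a symplectic leaf, not a consequence of the irreducibility of $Y$.
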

\begin{proof}
	We begin by proving that $Y$ is a log symplectic subvariety of $X$ in the sense of \cite[Definition 7.16]{Gualtieri}. To this end, consider the unique open dense symplectic leaf $X_{0}:=X\setminus Z\subseteq X$.
	Since $Y$ is a Poisson transversal in $X$, Proposition \ref{Proposition: Symplectic transversal} forces $Y_0:=Y\cap X_{0}$ to be a symplectic subvariety of $X_{0}$.	
	
	Now let $Z_1,\ldots,Z_k$ be the irreducible components of $Z$, and set
	$$Z_I:=\bigcap_{i\in I}Z_i$$ for each subset $I\subseteq\{1,\ldots,k\}$. Each irreducible component of $Z$ is a union of symplectic leaves in $X$ (cf. \cite[Exercise 5.2]{Pym}), implying that $Z_I$ is a union of symplectic leaves for each $I\subseteq\{1,\ldots,k\}$. On the other hand, the Poisson transversal $Y$ is necessarily transverse to the symplectic leaves in $X$. These last two sentences imply that $Y$ is transverse to $Z_I$ for all $I\subseteq\{1,\ldots,k\}$.
	
	The previous two paragraphs show $Y$ to be a log symplectic subvariety of $X$, and we let $P_{\text{log}}$ denote the resulting Poisson bivector on $Y$. It follows that $Y_{0}$ is the unique open dense symplectic leaf of $(Y,P_{\text{log}})$, and that its symplectic form is the pullback of the symplectic form on $X_{0}$. We also know that $P_{\text{tr}}$ is non-degenerate on $Y_{0}$, and that it coincides with the pullback of the symplectic structure from $X_{0}$ to $Y_{0}$ (see Proposition \ref{Proposition: Symplectic transversal}). One concludes that $P_{\text{log}}$ and $P_{\text{tr}}$ coincide on $Y_{0}$. Since $Y_0$ is dense in $Y$, it follows that $P_{\text{log}}=P_{\text{tr}}$. This establishes (i) and (ii).   
\end{proof}

The following well-known result concerns the behaviour of Poisson transversals with respect to Poisson morphisms (cf. \cite[Lemma 7]{Frejlich}).

\begin{prop}\label{Proposition: Poisson transversal}
Let $\phi: X_1\longrightarrow X_2$ be a Poisson morphism between Poisson varieties $X_1$ and $X_2$. If $Y\subseteq X_2$ is a Poisson transversal, then $\phi^{-1}(Y)$ is a Poisson transversal in $X_1$. The codimension of $\phi^{-1}(Y)$ in $X_1$ is equal to the codimension of $Y$ in $X_2$. 
\end{prop}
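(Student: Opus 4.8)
The plan is to argue pointwise, linearizing the Poisson-morphism condition and then running the linear algebra of cosymplectic subspaces. Fix $x_1\in\phi^{-1}(Y)$, set $x_2:=\phi(x_1)$, and let $A:=d\phi_{x_1}\colon T_{x_1}X_1\to T_{x_2}X_2$ with dual $A^*\colon T_{x_2}^*X_2\to T_{x_1}^*X_1$; suppressing base points, write $P_i$ for $(P_i)_{x_i}\colon T_{x_i}^*X_i\to T_{x_i}X_i$. Since every cotangent vector at $x_2$ extends to a locally defined one-form, the defining identity of a Poisson morphism specializes to $A\circ P_1\circ A^*=P_2$.

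First I would note that $\phi$ is automatically transverse to $Y$ along $\phi^{-1}(Y)$: for $\beta\in(T_{x_2}Y)^{\dagger}$ one has $P_2(\beta)=A(P_1(A^*\beta))\in\mathrm{im}(A)$, so the transversal decomposition $T_{x_2}X_2=T_{x_2}Y\oplus P_2((T_{x_2}Y)^{\dagger})$ forces $\mathrm{im}(A)+T_{x_2}Y=T_{x_2}X_2$. As $\phi^{-1}(Y)$ is locally closed and $Y$ is locally cut out by $c:=\mathrm{codim}_{X_2}(Y)$ functions with independent differentials, transversality makes their $\phi$-pullbacks cut out $\phi^{-1}(Y)$ with independent differentials at each of its points; hence $\phi^{-1}(Y)$ is smooth of codimension $c$ in $X_1$ (which already settles the codimension claim) with $T_{x_1}(\phi^{-1}(Y))=A^{-1}(T_{x_2}Y)$.

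Then I would verify the cosymplectic condition by linear algebra. Put $V_i:=T_{x_i}X_i$, $U:=T_{x_2}Y$, and $L:=A^{-1}(U)=T_{x_1}(\phi^{-1}(Y))$. The pivotal step is the identity $L^{\dagger}=A^*(U^{\dagger})$: the inclusion $A^*(U^{\dagger})\subseteq L^{\dagger}$ is immediate, $A^*$ is injective on $U^{\dagger}$ because $\ker A^*=(\mathrm{im}\,A)^{\dagger}$ and $(\mathrm{im}\,A)^{\dagger}\cap U^{\dagger}=(\mathrm{im}\,A+U)^{\dagger}=0$ by transversality, and a rank count gives $\dim L=\dim V_1+\dim U-\dim V_2$, hence $\dim L^{\dagger}=\dim U^{\dagger}=\dim A^*(U^{\dagger})$. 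Feeding $U^{\dagger}$ into $A\circ P_1\circ A^*=P_2$ then yields $A(P_1(L^{\dagger}))=P_2(U^{\dagger})$. Now if $v\in L\cap P_1(L^{\dagger})$, then $Av\in A(L)\cap A(P_1(L^{\dagger}))\subseteq U\cap P_2(U^{\dagger})=0$, so $v\in\ker A$; writing $v=P_1(A^*\eta)$ with $\eta\in U^{\dagger}$ gives $0=Av=P_2(\eta)$, and since $P_2$ is injective on $U^{\dagger}$ (from $V_2=U\oplus P_2(U^{\dagger})$) we get $\eta=0$ and $v=0$. Finally $\dim P_1(L^{\dagger})\ge\dim A(P_1(L^{\dagger}))=\dim P_2(U^{\dagger})=\dim V_2-\dim U$, so $\dim L+\dim P_1(L^{\dagger})\ge\dim V_1$; combined with $L\cap P_1(L^{\dagger})=0$ this forces $V_1=L\oplus P_1(L^{\dagger})$. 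As $x_1$ was arbitrary, $\phi^{-1}(Y)$ is a Poisson transversal.

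The conceptual crux is the observation in the second paragraph — that a Poisson morphism is transverse to every Poisson transversal in its target — which I might isolate as a lemma; everything after that is bookkeeping with annihilators, ranks, and the two transversal direct-sum decompositions. The step most prone to error is the identity $L^{\dagger}=A^*(U^{\dagger})$ and the attendant dimension comparisons, so I would keep careful track of which ambient (co)tangent space each annihilator lives in. The degenerate case $\phi^{-1}(Y)=\varnothing$ is vacuous.
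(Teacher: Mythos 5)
Your proof is correct and complete: the pointwise identity $d\phi\circ P_1\circ d\phi^*=P_2$, the resulting transversality of $\phi$ to $Y$, the identification $T(\phi^{-1}(Y))^{\dagger}=d\phi^*\bigl((TY)^{\dagger}\bigr)$, and the dimension count all check out. The paper offers no proof of its own here --- it records the statement as well known and cites Frejlich--M\u{a}rcu\c{t} --- and your argument is essentially the standard one from that reference, so there is nothing further to compare.
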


We now consider a concrete application of Proposition \ref{Proposition: Poisson transversal}. To this end, recall the Lie-theoretic notation and setup established in \ref{Subsection: Lie theoretic conventions}.

\begin{cor}\label{Corollary: Slice Poisson transversal}
Suppose that $(X,P,\nu)$ is a Hamiltonian $G$-variety. If $\tau = (\xi,h,\eta)$ is an $\sln_2$-triple in $\g$, then $\nu^{-1}(\mathcal S_\tau)$ is a Poisson transversal in $X$. This transversal has codimension $\dim\g-\dim(\g_\eta)$ in $X$.
\end{cor}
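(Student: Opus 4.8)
The plan is to deduce this corollary directly from Proposition~\ref{Proposition: Poisson transversal} together with the known fact that a Slodowy slice is a Poisson transversal in $\g$. First I would recall that the moment map $\nu:X\longrightarrow\g$ is a Poisson morphism, where $\g$ carries the Lie--Poisson structure transported from $\g^*$ via the Killing form as in Subsection~\ref{Subsection: Lie theoretic conventions}; this is exactly the statement cited after the definition of Hamiltonian $K$-variety. Thus, by Proposition~\ref{Proposition: Poisson transversal}, it suffices to show that $\mathcal{S}_\tau=\xi+\g_\eta$ is a Poisson transversal in $\g$, and the codimension claim will follow immediately since $\operatorname{codim}(\mathcal{S}_\tau,\g)=\dim\g-\dim\g_\eta$.

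The core of the argument is therefore the verification that $\mathcal{S}_\tau$ satisfies \eqref{Equation: PoissonTransversal}, i.e.\ that $T\g|_{\mathcal{S}_\tau}=T\mathcal{S}_\tau\oplus P(T\mathcal{S}_\tau^\dagger)$ at every point $y\in\mathcal{S}_\tau$. Here $T_y\g=\g$ and $T_y\mathcal{S}_\tau=\g_\eta$ (translating the affine subspace), so I would use the $\sln_2$-representation-theoretic decomposition $\g=\g_\eta\oplus[\xi,\g]=\g_\eta\oplus\operatorname{im}(\ad_\xi)$, which holds because $\g_\eta=\ker(\ad_\eta)$ is a complement to $\operatorname{im}(\ad_\xi)$ by $\sln_2$-theory (lowest-weight vectors versus the image of the raising operator, using nondegeneracy of the Killing form to match $\ker(\ad_\eta)^\perp=\operatorname{im}(\ad_\eta)$ and the symmetry $\langle[\xi,u],v\rangle=-\langle u,[\xi,v]\rangle$). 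Under the Killing-form identification, the annihilator $T_y\mathcal{S}_\tau^\dagger=\g_\eta^\perp$, and the Poisson bivector at $y$ sends $\alpha\in\g^*\cong\g$ to $-\ad_y^*(\alpha)$, which under the identification is $[\alpha,y]=\ad_{-y}(\alpha)$ — in particular $P_y(\g_\eta^\perp)=[\g_\eta^\perp,y]$. So the transversality condition becomes $\g=\g_\eta+[\g_\eta^\perp,y]$ with the sum being direct, and I would reduce this (via dimension count, since $\dim\g_\eta^\perp=\dim\g-\dim\g_\eta$ and $\ad_y$ is injective on $\g_\eta^\perp$ for $y$ regular, or more carefully for general $y\in\mathcal{S}_\tau$) to showing $\g_\eta\cap[\g_\eta^\perp,y]=0$.

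The main obstacle I anticipate is handling $y\in\mathcal{S}_\tau$ that are \emph{not} regular, so that $\ad_y$ need not be injective on $\g_\eta^\perp$ and a naive dimension count on $[\g_\eta^\perp,y]$ fails. The standard resolution is Kostant's transversality result: for \emph{any} $y\in\mathcal{S}_\tau$ one has $[\g,y]\cap\g_\eta=0$ is false in general, but what is true and what one needs is the weaker statement that $\g_\eta$ is transverse to the orbit $\g\cdot y=[\g,y]$ in $\g$ for all $y\in\mathcal{S}_\tau$, equivalently $\g=\g_\eta+[\g,y]$; combining this with $[\g,y]=[\g_y,y]+[\g_y^\perp,y]=[\g_y^\perp,y]\subseteq[\g_\eta^\perp\cup\ldots]$ requires care. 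The cleanest path is to cite the well-known fact (Gan--Ginzburg, or Bielawski \cite{Bielawski}, which the paper already references for ``first properties'' of Poisson slices) that $\mathcal{S}_\tau$ is a Poisson transversal in $\g$, and then invoke Proposition~\ref{Proposition: Poisson transversal}. I would structure the write-up so that the only substantive input is this fact about $\g$ itself, with everything else being the formal pullback. Concretely: state that $\mathcal{S}_\tau\subseteq\g$ is a Poisson transversal (with reference), note $\nu$ is a Poisson morphism, apply Proposition~\ref{Proposition: Poisson transversal} to conclude $\nu^{-1}(\mathcal{S}_\tau)$ is a Poisson transversal in $X$, and read off the codimension from the second sentence of that proposition together with $\operatorname{codim}(\mathcal{S}_\tau,\g)=\dim\g-\dim\g_\eta$.
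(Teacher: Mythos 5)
Your proposal is correct and, in its final form, is exactly the paper's argument: cite the fact that $\mathcal{S}_\tau$ is a Poisson transversal in $\g$ (the paper uses \cite[Section 3.1]{Gan}), note that the moment map is a Poisson morphism, and apply Proposition~\ref{Proposition: Poisson transversal} to get both the transversality and the codimension. The direct $\sln_2$-theoretic verification you sketch in the middle is unnecessary for the write-up and is where the only shaky steps occur, so your decision to fall back on the citation matches the paper precisely.
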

\begin{proof}
The moment map $\nu:X\longrightarrow \g$ is necessarily a morphism of Poisson varieties (e.g. \cite[Proposition 7.1]{Weinstein}). At the same time, \cite[Section 3.1]{Gan} explains that $\mathcal{S}_{\tau}$ is a Poisson transversal in $\g$. The desired now result now follows immediately from Proposition \ref{Proposition: Poisson transversal}.
\end{proof}

A consequence of Corollary \ref{Corollary: Slice Poisson transversal} is that $\nu^{-1}(\mathcal{S}_{\tau})$ inherits a Poisson bivector $P_{\tau}$ from $(X,P)$. This gives rise to our notion of a \textit{Poisson slice}.

\begin{defn}\label{Definition: PoissonSlice}
Suppose that $(X,P,\nu)$ is a Hamiltonian $G$-variety, and let $\tau$ be an $\mathfrak{sl}_2$-triple in $\mathfrak{g}$. We call $X_\tau:=(\nu^{-1}(\mathcal S_\tau),P_{\tau})$ the \emph{Poisson slice} of $(X,P,\nu)$ with respect to $\tau$.
\end{defn}

This next proposition explains why we call $X_\tau$ a Poisson slice; it is a slice for the $G$-action on $X$ in the following sense.

\begin{prop}\label{Proposition: PoissonSliceTransverseG}
Let $(X,P,\nu)$ be a Hamiltonian $G$-variety. If $\tau$ is an $\mathfrak{sl}_2$-triple in $\g$, then $X_\tau$ is transverse to the $G$-orbits in $X$.
\end{prop}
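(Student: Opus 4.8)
The plan is to reduce the statement to a pointwise linear-algebra fact about the moment map and the Slodowy slice. Fix $x \in X_\tau$, so $\nu(x) \in \mathcal{S}_\tau$. We must show that $T_x X = T_x X_\tau + T_x(G \cdot x)$. Since $X_\tau = \nu^{-1}(\mathcal{S}_\tau)$, we have $T_x X_\tau = (d\nu_x)^{-1}(T_{\nu(x)}\mathcal{S}_\tau) = (d\nu_x)^{-1}(\mathfrak{g}_\eta)$, using that $\mathcal{S}_\tau = \xi + \mathfrak{g}_\eta$ is an affine subspace with constant tangent space $\mathfrak{g}_\eta$. So it suffices to prove that $d\nu_x(T_x(G\cdot x)) + \mathfrak{g}_\eta = \operatorname{image}(d\nu_x)$; in fact it is enough to show $d\nu_x(T_x(G\cdot x)) + \mathfrak{g}_\eta \supseteq \operatorname{image}(d\nu_x)$, because then any $v \in T_xX$ has $d\nu_x(v) = d\nu_x(w) + (\text{something in }\mathfrak{g}_\eta)$ for some $w \in T_x(G\cdot x)$, whence $v - w \in (d\nu_x)^{-1}(\mathfrak{g}_\eta) = T_x X_\tau$ and $v = (v-w) + w \in T_xX_\tau + T_x(G\cdot x)$.

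Next I would compute $d\nu_x$ on the tangent space to the orbit. The orbit direction at $x$ spanned by $y \in \mathfrak{g}$ is the fundamental vector field $(V_y)_x$, and the moment map condition \eqref{Equation: MomentCondition} together with the Poisson-morphism property of $\nu$ gives $d\nu_x((V_y)_x) = \operatorname{ad}_y^*(\nu(x))$, i.e. under the Killing-form identification $d\nu_x((V_y)_x) = [y, \nu(x)] = -\operatorname{ad}_{\nu(x)}(y)$ (up to sign conventions, which I would pin down using \eqref{Equation: HamiltonianVectorField}, \eqref{Equation: MomentCondition} and the description of the Lie--Poisson structure on $\mathfrak{g}$ in Subsection \ref{Subsection: Lie theoretic conventions}). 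Therefore $d\nu_x(T_x(G\cdot x)) = [\mathfrak{g}, \nu(x)] = \operatorname{image}(\operatorname{ad}_{\nu(x)})$. So the entire claim reduces to the purely Lie-theoretic statement:
\[
\operatorname{image}(\operatorname{ad}_{z}) + \mathfrak{g}_\eta = \mathfrak{g} \quad \text{for every } z \in \mathcal{S}_\tau.
\]

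The heart of the matter is this last identity, and it is exactly the transversality statement underlying the fact (already invoked in Corollary \ref{Corollary: Slice Poisson transversal}) that $\mathcal{S}_\tau$ is a Poisson transversal in $\mathfrak{g}$. I would prove it as follows: $\operatorname{image}(\operatorname{ad}_z)$ is the symplectic-leaf (coadjoint-orbit) tangent direction $T_z(G\cdot z)$ at $z$, so the identity says $\mathcal{S}_\tau$ is transverse to coadjoint orbits, which is precisely \cite[Section 3.1]{Gan}. Alternatively, one can give a direct argument in the case $z = \xi + v$ with $v \in \mathfrak{g}_\eta$: using the $\mathfrak{sl}_2$-representation theory of $\tau$, one has $\mathfrak{g} = \operatorname{image}(\operatorname{ad}_\xi) \oplus \mathfrak{g}_\xi$ and $\mathfrak{g} = \operatorname{image}(\operatorname{ad}_\eta) \oplus \mathfrak{g}_\eta$, and a Kostant-type filtration/grading argument by $\operatorname{ad}_h$-weights shows $\operatorname{image}(\operatorname{ad}_{\xi + v}) + \mathfrak{g}_\eta = \mathfrak{g}$ for all $v \in \mathfrak{g}_\eta$. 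In either case the main obstacle is simply getting this Lie-theoretic transversality cleanly stated and correctly attributed; everything else — translating orbit-transversality in $X$ into the image condition on $d\nu_x$, and the sign bookkeeping in the moment map identity — is routine. Once the $\mathfrak{g}$-level identity is in hand, the reduction in the first two paragraphs finishes the proof.
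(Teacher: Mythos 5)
Your argument is correct, but it takes a genuinely different route from the paper's. The paper stays entirely inside the Poisson formalism: it uses the Poisson-transversal decomposition $T_xX = T_xX_\tau \oplus P_x\bigl((T_xX_\tau)^\dagger\bigr)$ and then shows that the second summand lies in $T_x(Gx)$, by identifying $(T_xX_\tau)^\dagger$ with the span of the differentials $(d\nu^{[\eta,b]})_x$ and invoking $P_x(d\nu^y)=(V_y)_x$; this gives the slightly stronger conclusion that the canonical Poisson complement of $T_xX_\tau$ is an orbit direction. You instead push everything down to $\g$ via $d\nu_x$ and reduce to the Lie-theoretic identity $[\g,z]+\g_\eta=\g$ for $z\in\mathcal{S}_\tau$, using only the equivariance of $\nu$ (to get $d\nu_x(T_x(Gx))=[\g,\nu(x)]$) rather than the bivector $P$ at all. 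Both proofs ultimately rest on the same input from \cite{Gan}; yours is more elementary, the paper's is more intrinsically Poisson-geometric and produces the refined containment $P_x\bigl((T_xX_\tau)^\dagger\bigr)\subseteq T_x(Gx)$.

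Two small points to tighten. First, the identity $T_xX_\tau=(d\nu_x)^{-1}(\g_\eta)$ is not automatic for a preimage; it requires $\nu$ to be transverse to $\mathcal{S}_\tau$ at $x$. This does hold, and in fact it follows from your own computation (since $[\g,\nu(x)]\subseteq\operatorname{image}(d\nu_x)$ and $[\g,\nu(x)]+\g_\eta=\g$), so you should establish the Lie-theoretic identity \emph{first}, deduce transversality of $\nu$ to $\mathcal{S}_\tau$, and only then write down the tangent space of $X_\tau$; alternatively, cite Corollary \ref{Corollary: Slice Poisson transversal} for the smoothness and codimension of $X_\tau$ and conclude by a dimension count. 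Second, in your alternative $\mathfrak{sl}_2$-theoretic sketch the decompositions are misstated: representation theory of $\tau$ gives $\g=\operatorname{image}(\ad_\xi)\oplus\g_\eta$ and $\g=\operatorname{image}(\ad_\eta)\oplus\g_\xi$, not $\operatorname{image}(\ad_\xi)\oplus\g_\xi$. Since your primary justification is the citation of \cite{Gan}, this does not affect the validity of the proof.
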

\begin{proof} 
Fix $x\in\nu^{-1}(\mathcal{S}_{\tau})$ and set $y:=\nu(x)\in\mathcal{S}_{\tau}$. Consider the differential
$d\nu_x:T_xX\longrightarrow\g$ and its dual $d\nu_x^*:\g^*\longrightarrow T_x^*X$, and let $P_{\mathfrak{g}}$ be the Poisson bivector on $\g$.
Since $\nu$ is a morphism of Poisson varieties, we have 
$$(P_\g)_y = d\nu_x\circ P_x\circ d\nu_x^*.$$
We also know $\mathcal S_\tau\subseteq\g$ to be a Poisson transversal (e.g. by Corollary \ref{Corollary: Slice Poisson transversal}), so that
$$\g = T_y\mathcal{S}_{\tau} \oplus (P_\g)_y((T_y\mathcal{S}_{\tau})^{\dagger}) = T_y\mathcal{S}_{\tau}\oplus d\nu_x(P_x(d\nu_x^*((T_y\mathcal{S}_{\tau})^{\dagger}))).$$
One immediate conclusion is that $\nu$ is transverse to $\mathcal S_\tau$. We also conclude that 
$$T_x(\nu^{-1}(\mathcal S_\tau)) = \ker\bigg(\mathrm{pr}_2\circ d\nu_x: T_xX\longrightarrow (P_\g)_y((T_y\mathcal{S}_{\tau})^{\dagger})\bigg),$$
where $$\mathrm{pr}_2: \g = T_y\mathcal{S}_{\tau} \oplus (P_\g)_y((T_y\mathcal{S}_{\tau})^{\dagger})\longrightarrow (P_\g)_y((T_y\mathcal{S}_{\tau})^{\dagger})$$ is the natural projection. 
It follows that  
$$T_x(\nu^{-1}(\mathcal{S}_{\tau}))^\dagger= \mathrm{image}\bigg(d\nu_x^*\circ\mathrm{pr}_2^*:  (P_\g)_y((T_y\mathcal{S}_{\tau})^{\dagger})^*\longrightarrow T_x^*X\bigg),$$
where $$\mathrm{pr}_2^*:(P_\g)_y((T_y\mathcal{S}_{\tau})^{\dagger})^*\longrightarrow\g^*$$ is the dual of $\mathrm{pr}_2$. This amounts to the statement that
$$T_x(\nu^{-1}(\mathcal{S}_{\tau}))^\dagger=d\nu_x^*(\g_\eta^{\dagger}),$$ while we know that 
the Killing form identifies $\g_{\eta}^{\dagger}\subseteq\g^*$ with $\g_{\eta}^{\perp}=[\g,\eta]\subseteq\g$.
We conclude that
$$T_x(\nu^{-1}(\mathcal{S}_{\tau}))^\dagger = \mathrm{span}\{(d\nu^{[\eta,b]})_x\colon b\in\g\},$$ where $\nu^{[\eta,b]}:X\longrightarrow\mathbb{C}$ is defined by
$$\nu^{[\eta,b]}(z)=\langle\nu(z),[\eta,b]\rangle.$$
Equations \eqref{Equation: HamiltonianVectorField} and \eqref{Equation: MomentCondition} now imply that
$$P_x(T_x(\nu^{-1}(\mathcal{S}_{\tau}))^\dagger) = \mathrm{span}\{P_x((d\nu^{[\eta,b]})_x)\colon b\in\g\} = \mathrm{span}\{V^{[\eta,b]}_x\colon b\in\g\}\subseteq T_x(Gx).$$
This combines with $\nu^{-1}(\mathcal{S}_{\tau})$ being a Poisson transversal to yield 
$$T_xX = T_x(\nu^{-1}(\mathcal S_\tau))\oplus P_x(T_x(\nu^{-1}(\mathcal{S}_{\tau}))^\dagger) = T_x(\nu^{-1}(\mathcal S_\tau))+ T_x(Gx),$$
completing the proof.
\end{proof}

Let $Y$ be an irreducible component of $X_{\tau}$. The bivector $P_{\tau}$ then restricts to a Poisson bivector $P_{Y,\tau}$ on $Y$. This leads to the following observation.

\begin{cor}\label{Corollary: Log symplectic}
	Suppose that $(X,P,\nu)$ is a Hamiltonian $G$-variety. Assume that $(X,P)$ is log symplectic with divisor $Z$, and let $\tau$ be an $\mathfrak{sl}_2$-triple in $\g$. Let $Y$ be an irreducible component of the Poisson slice $X_{\tau}$. 
	\begin{itemize}
		\item[(i)] The Poisson variety $(Y,P_{Y,\tau})$ is log symplectic with divisor $Y\cap Z$.
		\item[(ii)] If one equips $Y\setminus Z$ and $X\setminus Z$ with the symplectic structures inherited as symplectic leaves of $(Y,P_{Y,\tau})$ and $(X,P)$, respectively, then $Y\setminus Z$ is a symplectic subvariety of $X\setminus Z$.
		\item[(iii)] If $(X,P)$ is symplectic, then $(X_{\tau},P_{\tau})$ is symplectic and the symplectic form on $(X,P)$ pulls back to the symplectic form on $(X_{\tau},P_{\tau})$. 
	\end{itemize}
\end{cor}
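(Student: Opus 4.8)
The plan is to deduce everything from the single fact that $X_\tau=\nu^{-1}(\mathcal{S}_\tau)$ is a Poisson transversal in $X$, combined with Propositions \ref{Proposition: Symplectic transversal} and \ref{Proposition: Poissonslicelogsymplectic}. First I would invoke Corollary \ref{Corollary: Slice Poisson transversal} to conclude that $X_\tau\subseteq X$ is a Poisson transversal; in particular $X_\tau$ is smooth, hence (by our dimension conventions) pure-dimensional with irreducible components equal to its connected components. Consequently each irreducible component $Y$ of $X_\tau$ is open and closed in $X_\tau$, so $Y$ is a smooth locally closed subvariety of $X$. Since the defining condition \eqref{Equation: PoissonTransversal} is pointwise and $T_xY=T_x(X_\tau)$, $T_xY^\dagger=T_x(X_\tau)^\dagger$ for all $x\in Y$, the identity $TX|_Y=TY\oplus P(TY^\dagger)$ holds because the corresponding identity holds for $X_\tau$. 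Thus $Y$ is an irreducible Poisson transversal in $(X,P)$, and the bivector it inherits as a transversal is $P_\tau\big\vert_Y$, which is exactly $P_{Y,\tau}$ as defined just before the corollary.

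I would then apply Proposition \ref{Proposition: Poissonslicelogsymplectic} to this $Y$, playing the role of the ``$Y$'' there, with $Z$ as the ambient divisor. Part (i) of that proposition yields that $(Y,P_{Y,\tau})$ is log symplectic with divisor $Z\cap Y$, which is statement (i) here; part (ii) yields that $Y\setminus Z$ is a symplectic subvariety of $X\setminus Z$ when both are given the symplectic structures they carry as symplectic leaves of $(Y,P_{Y,\tau})$ and $(X,P)$, which is statement (ii) here.

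Finally, for (iii), suppose $(X,P)$ is symplectic. Since $X_\tau$ is a Poisson transversal in $X$, Proposition \ref{Proposition: Symplectic transversal} shows that $X_\tau$ is a symplectic subvariety of $X$ and that the symplectic structure it thereby acquires agrees with $P_\tau$. By the meaning of ``symplectic subvariety'' this says precisely that the symplectic form on $(X,P)$ pulls back to the symplectic form on $(X_\tau,P_\tau)$, as claimed.

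I do not expect a serious obstacle: the only step requiring genuine care is the passage from the possibly reducible $X_\tau$ to its irreducible components, where one must use smoothness of $X_\tau$ both to identify irreducible components with connected components and to see that each such component is again a Poisson transversal carrying the restricted bivector; the rest is a direct application of the two cited propositions.
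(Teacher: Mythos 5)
Your proposal is correct and follows exactly the route the paper takes: the paper's proof of this corollary is the one-line statement that it "follows immediately from Proposition \ref{Proposition: Symplectic transversal}, Proposition \ref{Proposition: Poissonslicelogsymplectic}, and Corollary \ref{Corollary: Slice Poisson transversal}", and your write-up simply supplies the (correct) details of that deduction, including the useful observation that smoothness of the Poisson transversal $X_{\tau}$ makes its irreducible components open and closed, hence themselves irreducible Poisson transversals.
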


\begin{proof}
	This follows immediately from Proposition \ref{Proposition: Symplectic transversal}, Proposition \ref{Proposition: Poissonslicelogsymplectic}, and Corollary \ref{Corollary: Slice Poisson transversal}.
\end{proof}

The following immediate consequence is used extensively in later sections.

\begin{cor}\label{Corollary: Symplectic subvariety}
If $\tau$ is an $\sln_2$-triple in $\g$, then $G\times\mathcal S_\tau$ is a symplectic subvariety of $T^*G=G\times\g$.
\end{cor}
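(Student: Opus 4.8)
The plan is to deduce Corollary~\ref{Corollary: Symplectic subvariety} from the general machinery already in place. Recall that $T^*G=G\times\g$ is a Hamiltonian $G$-variety for the action of $G_R=\{e\}\times G\subseteq G\times G$, i.e.\ the action $(h,y)\mapsto (h,\Ad_g(y))$ restricted to the second factor, with moment map $\rho_R:T^*G\longrightarrow\g$, $(g,y)\mapsto y$, coming from \eqref{Equation: First moment}. One of the ``straightforward observations'' recorded in the introduction identifies the associated Poisson slice: $(T^*G)_\tau=\rho_R^{-1}(\mathcal S_\tau)=G\times\mathcal S_\tau$ as a set. So the content to verify is purely about the \emph{Poisson structure} on this slice.

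First I would invoke Corollary~\ref{Corollary: Slice Poisson transversal} with $X=T^*G$, $\nu=\rho_R$: it tells us that $\rho_R^{-1}(\mathcal S_\tau)=G\times\mathcal S_\tau$ is a Poisson transversal in $T^*G$, and hence inherits a Poisson bivector $P_\tau$ (this is exactly the structure called for in Definition~\ref{Definition: PoissonSlice}). Next, since $T^*G$ with its canonical symplectic form $\omega$ is in particular a symplectic variety, Corollary~\ref{Corollary: Log symplectic}(iii) (or directly Proposition~\ref{Proposition: Symplectic transversal}) applies: every Poisson transversal in a symplectic variety is a symplectic subvariety, and the inherited Poisson structure is precisely the one coming from the restriction of $\omega$ to the submanifold. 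Applying this to $Y=G\times\mathcal S_\tau\subseteq T^*G$ gives exactly the assertion that $G\times\mathcal S_\tau$ is a symplectic subvariety of $T^*G=G\times\g$, with symplectic form the pullback of $\omega$ along the inclusion.

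The only genuinely non-routine point — and the one I would spell out rather than leave implicit — is the identification $(T^*G)_\tau = G\times\mathcal S_\tau$ underlying the whole statement: that the preimage $\rho_R^{-1}(\mathcal S_\tau)$ is $\{(g,y): y\in\mathcal S_\tau\}=G\times\mathcal S_\tau$ as a subvariety of $G\times\g$. This is immediate from the explicit formula $\rho_R(g,y)=y$, so in fact there is no real obstacle; the corollary is a direct specialization. I would therefore present the proof as a two-line citation chain: $G\times\mathcal S_\tau$ is a Poisson transversal by Corollary~\ref{Corollary: Slice Poisson transversal} (applied to the moment map $\rho_R$, whose fibre over $\mathcal S_\tau$ is $G\times\mathcal S_\tau$), and it is therefore a symplectic subvariety of the symplectic variety $T^*G$ by Proposition~\ref{Proposition: Symplectic transversal} (equivalently Corollary~\ref{Corollary: Log symplectic}(iii)), the symplectic form being the restriction of $\omega$.
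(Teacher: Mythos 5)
Your proposal is correct and is essentially identical to the paper's own proof, which simply applies Corollary \ref{Corollary: Log symplectic}(iii) to $X=T^*G$ with the Hamiltonian action of $G_R=\{e\}\times G$. The extra detail you supply (the identification $\rho_R^{-1}(\mathcal{S}_\tau)=G\times\mathcal{S}_\tau$ via the explicit formula $\rho_R(g,y)=y$, and the role of Corollary \ref{Corollary: Slice Poisson transversal}) is exactly what the paper leaves implicit.
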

\begin{proof}
	Apply Corollary \ref{Corollary: Log symplectic}(iii) to $X = T^*G$ with the Hamiltonian action of $G_R=\{e\}\times G\subseteq G\times G$.
\end{proof}

\subsection{Poisson slices via Hamiltonian reduction}\label{Subsection: Poisson slices via Hamiltonian reduction}
Recall the Hamiltonian action of $G\times G$ on $T^*G= G\times\mathfrak{g}$ discussed in Subsection \ref{Subsection: MomentLogCotangent}. The symplectic subvariety $G\times\mathcal{S}_{\tau}$ is invariant under $G_L=G\times\{e\}\subseteq G\times G$, and 
\begin{equation}\label{Equation: Symplectic moment map}\rho_{\tau}:=\rho_L\big\vert_{G\times\mathcal{S}_{\tau}}:G\times\mathcal{S}_{\tau}\longrightarrow\mathfrak{g},\quad (g,x)\longrightarrow\mathrm{Ad}_g(x)\end{equation} is a corresponding moment map. Now let $(X,P,\nu)$ be a Hamiltonian $G$-variety, and consider the product Poisson variety $X\times (G\times \mathcal S_\tau)$. The diagonal action of $G$ on $X\times (G\times\mathcal{S}_{\tau})$ is then Hamiltonian with moment map
$$\mu_{\tau}:X\times (G\times\mathcal{S}_{\tau})\longrightarrow\mathfrak{g},\quad (x,(g,y))\longrightarrow\nu(x)-\mathrm{Ad}_g(y).$$ These considerations allow us to realize Poisson slices via Hamiltonian reduction. 

\begin{prop}\label{Proposition: PoissonSlice}
Let $(X,P,\nu)$ be a Hamiltonian $G$-variety, and let $\tau$ be an $\sln_2$-triple in $\g$. If we endow $X\times (G\times\mathcal{S}_{\tau})$ with the Poisson structure and Hamiltonian $G$-action described above, then there is a Poisson variety isomorphism 
	\begin{equation}\label{Equation: Ham iso}\psi_{\tau}:X_{\tau}\overset{\cong}\longrightarrow (X\times (G\times \mathcal S_\tau))\sslash G,\quad x\longrightarrow [x:(e,\nu(x))].\end{equation} 
\end{prop}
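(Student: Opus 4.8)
The strategy is to write down the inverse of $\psi_{\tau}$ by hand — which also forces the quotient on the right-hand side to exist — and then to identify the Poisson structure coming from reduction with the one that $X_{\tau}$ carries as a Poisson transversal. For Step~1, recall from Subsection~\ref{Subsection: Poisson slices via Hamiltonian reduction} that $\mu_{\tau}(x,(g,y)) = \nu(x) - \mathrm{Ad}_g(y)$, so that
$$\mu_{\tau}^{-1}(0) = \{(x,(g,y)) \in X\times(G\times\mathcal{S}_{\tau}) : \nu(x) = \mathrm{Ad}_g(y)\}.$$
The diagonal action $g'\cdot(x,(g,y)) = (g'\cdot x,(g'g,y))$ is free — it already is via left translation on the $G$-factor of $G\times\mathcal{S}_{\tau}$ — hence free on $\mu_{\tau}^{-1}(0)$. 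I would check that
$$\mu_{\tau}^{-1}(0)\longrightarrow G\times X_{\tau},\quad (x,(g,y))\longmapsto(g,\,g^{-1}\cdot x)$$
is a $G$-equivariant isomorphism of varieties, with inverse $(g,x')\mapsto(g\cdot x',(g,\nu(x')))$, where $G$ acts on $G\times X_{\tau}$ by left translation on the first factor and trivially on the second; here $g^{-1}\cdot x\in X_{\tau}$ because $\nu(g^{-1}\cdot x) = \mathrm{Ad}_{g^{-1}}\mathrm{Ad}_g(y) = y\in\mathcal{S}_{\tau}$. Since $G\times X_{\tau}\to X_{\tau}$ is a geometric quotient, this shows that $\mu_{\tau}^{-1}(0)\to\mu_{\tau}^{-1}(0)/G$ exists as a geometric quotient — so $(X\times(G\times\mathcal{S}_{\tau}))\sslash G$ is a well-defined smooth Poisson variety by Subsection~\ref{Subsection: Hamiltonian reduction} and Lemma~\ref{Lemma: FreeActionGbundle} — and that $\psi_{\tau}$, which sends $x$ to the class of $(x,(e,\nu(x)))$, is an isomorphism of varieties.

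For Step~2, compatibility with the Poisson structures, I would bring in the isomorphism $\psi\colon X\overset{\cong}{\longrightarrow}(X\times T^*G)\sslash G_L$ of \eqref{Equation: Reduction isomorphism}, which is known to be an isomorphism of Hamiltonian $G$-varieties (hence Poisson), with the residual $G=G_R$-moment map corresponding to $\nu$. By Corollary~\ref{Corollary: Symplectic subvariety}, $G\times\mathcal{S}_{\tau}$ is a symplectic subvariety of $T^*G$, so $N:=X\times(G\times\mathcal{S}_{\tau})$ is a Poisson transversal in $X\times T^*G$; and since $\mu_R(x,(g,y)) = -y$ in \eqref{Equation: muXT*G}, $N$ is in fact the preimage, under the $G_L$-invariant Poisson map $\mu_R$, of a Poisson transversal in $\g$. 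The diagonal $G$ on $N$ is exactly $G_L$ restricted to $N$, so $(X\times(G\times\mathcal{S}_{\tau}))\sslash G = (\mu_L^{-1}(0)\cap N)/G_L$. Unwinding \eqref{Equation: Reduction isomorphism}, one checks that the square with top $\psi_{\tau}$, bottom $\psi$, left edge the inclusion $X_{\tau}\hookrightarrow X$, and right edge the map induced by $N\hookrightarrow X\times T^*G$ commutes, and that the right edge identifies $(X\times(G\times\mathcal{S}_{\tau}))\sslash G$ with $\nu^{-1}(\mathcal{S}_{\tau}) = X_{\tau}\subseteq X$. As $\psi$ is a Poisson isomorphism and $X_{\tau}$ is a Poisson transversal in $X$ (Corollary~\ref{Corollary: Slice Poisson transversal}), the proof reduces to showing that the Poisson structure on $(X\times(G\times\mathcal{S}_{\tau}))\sslash G$ obtained by Hamiltonian reduction agrees with the transversal Poisson structure it inherits inside $(X\times T^*G)\sslash G_L$.

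This last compatibility is the main obstacle, since Hamiltonian reduction and restriction to a Poisson transversal are a priori unrelated operations. I expect to isolate it in an auxiliary lemma of the following shape: if $M$ carries commuting Hamiltonian actions of $K_1$ and $K_2$ with moment maps $\mu_1,\mu_2$ and $\Sigma\subseteq\mathfrak{k}_2^*$ is a Poisson transversal, then $N:=\mu_2^{-1}(\Sigma)$ is a $K_1$-invariant Poisson transversal, the $K_1$-action on $N$ is Hamiltonian with moment map $\mu_1\big\vert_N$ — the moment condition holding because $\mu_1$ is $K_2$-invariant, so that $d\mu_1^y$ annihilates $P_M(TN^{\dagger})$ — and the reduction $N\sslash K_1$ is canonically the Poisson transversal $\overline{\mu}_2^{-1}(\Sigma)$ inside $M\sslash K_1$, with $\overline{\mu}_2$ the descent of $\mu_2$. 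Specializing to $M=X\times T^*G$, $K_1=G_L$, $K_2=G_R$ then closes the argument. Alternatively one can bypass this lemma by a direct pointwise comparison: at a slice point $(x,(e,\nu(x)))$ the section $s$ of $\psi_{\tau}$ has differential $v\mapsto(v,(0,d\nu_x(v)))$, and feeding this into the pointwise formula for the reduced bivector (Subsection~\ref{Subsection: Hamiltonian reduction}) and into the transversal-bivector formula for $P_{\tau}$ — using the descriptions of $T_x X_{\tau}^{\dagger}$ and of the $G$-orbit tangent directions from the proof of Proposition~\ref{Proposition: PoissonSliceTransverseG} — yields the required identity after a routine, if lengthy, tangent-space computation.
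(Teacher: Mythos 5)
Your Step 1 is the same as the paper's, just packaged differently: the paper defines the $G$-invariant map $J(x,(g,y))=g^{-1}\cdot x$, checks that its restriction to $\mu_\tau^{-1}(0)$ has the $G$-orbits as fibres, and invokes a quotient criterion to conclude it is the geometric quotient; your explicit $G$-equivariant trivialization $\mu_\tau^{-1}(0)\cong G\times X_\tau$ proves the same thing and is, if anything, more self-contained. Where you diverge is Step 2. The paper compares $P_{\mathrm{red}}$ and $P_{\mathrm{tr}}$ directly at a point $x\in X_\tau$: it extends a covector $\alpha\in T_x^*X_\tau$ to $\tilde\alpha\in\bigl(P_x((T_xX_\tau)^\dagger)\bigr)^\dagger$, uses the $G$-invariance of $J$ to take $\tilde\alpha'=dJ_z^*(\tilde\alpha)$ as the extension required by the reduction formula, and evaluates everything with the explicit bivector of $T^*G$ at $(e,\nu(x))$ from Lemma \ref{Lemma:PoissonT*G}; the two bivectors are then seen to agree. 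Your primary route instead factors through $\psi:X\cong(X\times T^*G)\sslash G_L$ from \eqref{Equation: Reduction isomorphism} together with a general principle that Hamiltonian reduction by $K_1$ commutes with passing to the Poisson transversal $\mu_2^{-1}(\Sigma)$ for a commuting second action. That principle is true and would make the proposition a formal consequence of \eqref{Equation: Reduction isomorphism} plus Corollary \ref{Corollary: Symplectic subvariety} --- a genuinely more conceptual and reusable organization --- but as written it is only stated, not proved, and its proof is precisely the pointwise comparison of a reduced bivector with a transversal bivector that the paper carries out in the special case; the lemma defers rather than eliminates the computation. Two details to watch if you pursue it: with the product convention $P_1\oplus(-P_2)$ and $\mu_R(x,(g,y))=-y$, the transversal $N=X\times(G\times\mathcal{S}_\tau)$ is $\mu_R^{-1}(-\mathcal{S}_\tau)$ rather than $\mu_R^{-1}(\mathcal{S}_\tau)$ (harmless, since $-\mathcal{S}_\tau$ is the Slodowy slice of $(-\xi,h,-\eta)$, but it must be tracked); and the $K_1$-invariance of $\mu_2$ does not follow from the actions commuting alone --- one only gets that $\{\mu_1^z,\mu_2^y\}$ is a Casimir --- though it is immediate in the case at hand. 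Your fallback, the direct tangent-space computation at $(x,(e,\nu(x)))$ using the descriptions from the proof of Proposition \ref{Proposition: PoissonSliceTransverseG}, is exactly the paper's argument.
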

\begin{proof}
We begin by noting that 
\begin{align*}
\mu_{\tau}^{-1}(0) & = \{(x,(g,y))\in X\times (G\times\mathcal S_\tau)\colon \nu(x)=\Ad_g(y)\} \\ & = \{(x,(g,y))\in X\times (G\times\mathcal S_\tau)\colon \nu(g^{-1}\cdot x) = y\}.
\end{align*}
It follows that the $G$-invariant map
$$J:X\times (G\times \mathcal S_\tau) \longrightarrow X,\quad (x,(g,y))\longrightarrow g^{-1}\cdot x$$
satisfies $J(\mu_{\tau}^{-1}(0)) \subseteq \nu^{-1}(\mathcal S_\tau)=X_{\tau}$, thereby inducing a map  
$$\pi := J\big\vert_{\mu_{\tau}^{-1}(0)}: \mu_{\tau}^{-1}(0)\longrightarrow X_{\tau}.$$
One then verifies that
$$\pi^{-1}(x)=G\cdot(x,e,\nu(x))\subseteq X\times (G\times\mathcal{S}_{\tau})$$ 
for all $x\in X_{\tau}$, where $G\cdot(x,e,\nu(x))$ is the $G$-orbit of $(x,e,\nu(x))$ in $X\times (G\times\mathcal{S}_{\tau})$. This forces $\pi$ to be the geometric quotient of $\mu_{\tau}^{-1}(0)$ by $G$ (e.g. by \cite[Proposition 25.3.5]{Tauvel}), i.e. 
$$(X\times (G\times \mathcal S_\tau))\sslash G = X_{\tau}.$$

We now have two Poisson structures on $X_{\tau}$: the Poisson structure $P_{\text{red}}$ from Hamiltonian reduction, and the structure $P_{\text{tr}}$ obtained from $X_{\tau}$ being a Poisson slice in $X$. It suffices to show that these Poisson structures coincide.

Fix $x\in X_{\tau}$ and $\alpha\in T_x^*X_{\tau}$. Since $X_{\tau}$ is a Poisson transversal in $X$, there is a unique extension of $\alpha$ to an element 
$$ \tilde{\alpha}\in \bigg(P_x\big((T_xX_{\tau})^\dagger\big)\bigg)^\dagger\subseteq T_x^*X.$$ 
The discussion of Poisson transversals in Subsection \ref{Subsection: Poisson slices} then implies that 
\begin{equation}\label{Equation: P3}(P_{\text{tr}})_x(\alpha) = P_x(\tilde\alpha).\end{equation}
We also have
\begin{equation}\label{Equation: P1}(P_{\text{red}})_x(\alpha) = d\pi_z ((P_{\tau})_z(\tilde\alpha')),\end{equation}
where $z=(x,e,\nu(x))$, 
$$\tilde\alpha'\in T_z(Gz)^\dagger\subseteq T_{z}^*(X\times (G\times\mathcal S_\tau))$$ 
is an extension of $d\pi_z^*(\alpha)$, and 
$$d\pi_z^*:T^*_xX_{\tau}\longrightarrow T_z^*(\mu_{\tau}^{-1}(0))$$ 
is the dual of 
$$d\pi_z:T_z(\mu_{\tau}^{-1}(0))\longrightarrow T_xX_{\tau}.$$
Since $J$ is $G$-invariant, we may take 
$$\tilde\alpha' := dJ_z^*(\tilde\alpha).$$
We also observe that
$$dJ_{z}(a,b,c) = a - (V^{b})_x$$
for all $(a,b,c)\in T_z(X\times (G\times\mathcal{S}_{\tau}))=T_xX\oplus\mathfrak{g}\oplus\mathfrak{g}_{\eta}$, where $V^b$ is the fundamental vector field on $X$ associated to $b\in\g$.
It follows that 
$$(dJ_z^*(\tilde\alpha))(a,b,c) = \tilde\alpha(a)-\tilde\alpha((V^b)_x) = \tilde\alpha(a)-\tilde\alpha(P_x((d\nu^b)_x)) =  \tilde\alpha(a)+(d\nu^b)_x(P_x(\tilde\alpha)),$$ yielding 
\begin{equation}\label{Equation: P2}\tilde\alpha' = (\tilde\alpha, d\nu_x(P_x(\tilde\alpha)) , 0)\in T^*_{z}(X\times (G\times \mathcal S_\tau)) = T_x^*X \oplus\g^*\oplus \g_\eta^*=T_x^*X\oplus\mathfrak{g}\oplus\mathfrak{g}_{\xi},
\end{equation}   
where we have made the identifications $\g_\eta^* = (\g/[\g,\xi])^*= [\g,\xi]^\perp = \g_\xi$.
Now set $w=(e,\nu(x))\in G\times\mathcal{S}_{\tau}$ and let $Q_{\tau}$ be the Poisson bivector on $G\times\mathcal{S}_{\tau}$. Lemma \ref{Lemma:PoissonT*G} then gives
$$(Q_{\tau})_w(d\nu_x(P_x(\tilde\alpha)),0) = (0,-d\nu_x(P_x(\tilde\alpha))).$$
This combines with \eqref{Equation: P3}, \eqref{Equation: P1}, and \eqref{Equation: P2} to yield 
\begin{align*}(P_{\text{red}})_x(\alpha) & = d\pi_z(P_x(\tilde\alpha),-(Q_{\tau})_w(d\nu_x(P_x(\tilde\alpha)),0))\\ & = d\pi_z(P_x(\tilde\alpha),0,d\nu_x(P_x(\tilde\alpha)))\\ & = P_x(\tilde\alpha)\\ &=(P_{\text{tr}})_x(\alpha),\end{align*}
as desired.
\end{proof}  

\begin{rem}\label{Rem: tauNull}
In the special case $\tau = 0$, we have $\mathcal{S}_{\tau}=\g$ and $G\times\mathcal S_\tau = G\times \g = T^*G$. Proposition \ref{Proposition: PoissonSlice} is then seen to recover the isomorphism \eqref{Equation: Reduction isomorphism}.
\end{rem}

Our next result is that Poisson slices can be realized via Hamiltonian reduction with respect to unipotent radicals of parabolic subgroups. To formulate this result, let $\tau = (\xi,h,\eta)$ be an $\sln_2$-triple in $\g$ and write $\g_\lambda\subseteq \g$ for the eigenspace of $\ad_h$ with eigenvalue $\lambda\in\mathbb{Z}$. The parabolic subalgebra 
$$\p_\tau := \bigoplus_{\lambda\leq 0}\g_\lambda$$
then has
$$\uu_\tau := \bigoplus_{\lambda <0}\g_\lambda$$
as its nilradical. Now consider the identifications 
$$\uu_\tau^* \cong \g/\uu_\tau^\perp=\mathfrak{g}/\mathfrak{p}_{\tau} \cong \uu_\tau^- := \bigoplus_{\lambda >0}\g_\lambda,$$
and thereby regard $\xi\in\mathfrak{u}_{\tau}^{-}$ as an element of $\mathfrak{u}_{\tau}^*$. Write $U_{\tau}\subseteq G$ for the unipotent subgroup with Lie algebra $\mathfrak{u}_{\tau}$, and let $(U_{\tau})_{\xi}$ be the $U_{\tau}$-stabilizer of $\xi$ under the coadjoint action. 

\begin{rem}
The Lie algebra of $(U_{\tau})_{\xi}$ is given by
$$(\mathfrak{u}_{\tau})_{\xi}=\bigoplus_{\lambda\leq -2}\mathfrak{g}_{\lambda}.$$ It follows that $(U_{\tau})_{\xi}=U_{\tau}$ if and only if $\tau$ is an even $\mathfrak{sl}_2$-triple, i.e. $\mathfrak{g}_{-1}=\{0\}$. If $\tau$ is a principal triple, then $\tau$ is even and $(U_{\tau})_{\xi}=U_{\tau}$ is a maximal unipotent subgroup of $G$.
\end{rem}

Let $(X,P,\nu)$ be a Hamiltonian $G$-variety. The action of $U_\tau$ is also Hamiltonian with moment map $\nu_\tau := p_\tau\circ \mu$, where 
$$\g = \p_\tau\oplus\uu_\tau^-\overset{p_{\tau}}\longrightarrow\uu_\tau^-=\mathfrak{u}_{\tau}^*$$
 is the projection. 
One has 
$$\nu_{\tau}^{-1}(\xi)=\nu^{-1}(\xi+\mathfrak{p}_{\tau}),$$
while the proof of \cite[Lemma 3.2]{Bielawski} shows the stabilizer $(U_{\tau})_{\xi}$ to act freely on $\xi+\mathfrak{p}_{\tau}$. It follows that $(U_{\tau})_{\xi}$ acts freely on $\nu_{\tau}^{-1}(\xi)$. This leads us to prove Proposition \ref{Proposition:PoissonSlice}, i.e. that the geometric quotient
\begin{equation}\label{Equation: Geometric quotient}X\sslash_\xi U_\tau = \nu_\tau^{-1}(\xi)/(U_\tau)_\xi\end{equation} exists and is Poisson-isomorphic to $X_\tau$.

\begin{rem}
The type of Hamiltonian reduction performed in \eqref{Equation: Geometric quotient} is particularly well-studied in the case of a principal triple $\tau$. In this case, one sometimes calls the Poisson variety $X\sslash_\xi U_\tau$ a \textit{Whittaker reduction} (e.g. \cite{Bezrukavnikov,Dimofte}). The nomenclature reflects Kostant's result \cite[Theorem 1.2]{KostantWhittaker}. 
\end{rem}
 
\begin{prop}\label{Proposition:PoissonSlice}
Let $(X,P,\nu)$ be a Hamiltonian $G$-variety. If $\tau=(\xi,h,\eta)$ is an $\mathfrak{sl}_2$-triple in $\g$, then there is a canonical isomorphism
$$X\sslash_\xi U_\tau \cong X_\tau$$
of Poisson varieties.
\end{prop}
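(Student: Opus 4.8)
The plan is to produce the isomorphism explicitly and then check that the two Poisson structures on $X_\tau$ agree pointwise, in close parallel with (and in fact somewhat more cheaply than) the proof of Proposition \ref{Proposition: PoissonSlice}, there being no $G\times\mathcal{S}_\tau$ factor to carry along here.

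First I would settle the isomorphism of varieties. As noted before the statement, $\nu_\tau^{-1}(\xi)=\nu^{-1}(\xi+\p_\tau)$ and $(U_\tau)_\xi$ acts freely on it. The geometric input is that the adjoint action map
$$a\colon (U_\tau)_\xi\times\mathcal{S}_\tau\longrightarrow\xi+\p_\tau,\qquad (u,s)\longmapsto\Ad_u(s),$$
is an isomorphism of affine varieties. This is a standard transversality statement: it rests on the inclusion $\g_\eta\subseteq\p_\tau$ (lowest weight lines have nonpositive $\ad_h$-weight), on the weight count $\dim\g_\eta=\dim\g_0+\dim\g_{-1}$ — which gives $\dim((U_\tau)_\xi\times\mathcal{S}_\tau)=\dim(\uu_\tau)_\xi+\dim\g_\eta=\dim\p_\tau$ using the Remark's description $(\uu_\tau)_\xi=\bigoplus_{\lambda\le-2}\g_\lambda$ — and on the $(U_\tau)_\xi$-orbits through $\mathcal{S}_\tau$ filling out $\xi+\p_\tau$ (cf. \cite[Section 3.1]{Gan} and the proof of \cite[Lemma 3.2]{Bielawski}). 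Granting that $a$ is an isomorphism, the map
$$\Phi\colon (U_\tau)_\xi\times X_\tau\longrightarrow\nu_\tau^{-1}(\xi),\qquad (u,x)\longmapsto u\cdot x$$
lands in $\nu^{-1}(\xi+\p_\tau)$, since $\nu(u\cdot x)=\Ad_u(\nu(x))\in\Ad_u(\mathcal{S}_\tau)$, and decomposing $\nu(x)=\Ad_u(s)$ via $a^{-1}$ exhibits an explicit morphism inverse $x\mapsto(u(x),u(x)^{-1}\cdot x)$. Thus $\Phi$ is a $(U_\tau)_\xi$-equivariant isomorphism, with $(U_\tau)_\xi$ acting on the source by left translation on the first factor; in particular the geometric quotient $X\sslash_\xi U_\tau=\nu_\tau^{-1}(\xi)/(U_\tau)_\xi$ exists and $\Phi$ descends to a canonical variety isomorphism $X_\tau\overset{\cong}{\longrightarrow}X\sslash_\xi U_\tau$.

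It then remains to verify that the bivector $P_\tau$ carried by $X_\tau$ as a Poisson transversal coincides with the reduced bivector $P_{\mathrm{red}}$ transported along this isomorphism, and here I would argue pointwise with the reduced-bivector formula recalled in Subsection \ref{Subsection: Hamiltonian reduction}. Fix $x\in X_\tau$, regarded inside $\nu_\tau^{-1}(\xi)$ via $\Phi$ at $u=e$; then $d\Phi$ yields $T_x\nu_\tau^{-1}(\xi)=\{(V^b)_x:b\in(\uu_\tau)_\xi\}\oplus T_xX_\tau$, and under $\Phi$ the quotient map $\pi$ is the projection onto the second summand. Given $\alpha\in T_x^*X_\tau$ with adapted transversal extension $\tilde\alpha\in T_x^*X$, one has $(P_\tau)_x(\alpha)=P_x(\tilde\alpha)\in T_xX_\tau$. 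The two observations that make the comparison work are: (i) $\tilde\alpha$ annihilates $T_x((U_\tau)_\xi\cdot x)$, since by the proof of Proposition \ref{Proposition: PoissonSliceTransverseG} it kills every $V^{[\eta,b]}_x$ while $(\uu_\tau)_\xi=\bigoplus_{\lambda\le-2}\g_\lambda\subseteq\g_\eta^{\perp}=[\g,\eta]$ by a Killing-form weight argument; and (ii) consequently $\tilde\alpha$ restricts to $d\pi_x^*(\alpha)$ on $T_x\nu_\tau^{-1}(\xi)$, so $\tilde\alpha$ is an admissible extension in the reduced-bivector formula. Since $P_x(\tilde\alpha)$ already lies in $T_xX_\tau$, on which $d\pi_x$ is the identity, the formula gives $(P_{\mathrm{red}})_x(\alpha)=d\pi_x(P_x(\tilde\alpha))=P_x(\tilde\alpha)=(P_\tau)_x(\alpha)$, as required. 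The only genuine obstacle is the variety-level fact that $a$ is an isomorphism — this is where the possible gap between $(U_\tau)_\xi$ and $U_\tau$ (i.e.\ whether $\tau$ is even) must be dealt with — and I would isolate it as a preliminary lemma, or cite \cite{Gan}, before assembling the argument as above.
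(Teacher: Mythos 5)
Your proposal is correct, and while the first half (exhibiting $X_\tau$ as the geometric quotient $\nu_\tau^{-1}(\xi)/(U_\tau)_\xi$ via the Bielawski isomorphism $(U_\tau)_\xi\times\mathcal{S}_\tau\overset{\cong}\longrightarrow\xi+\p_\tau$ and the induced $(U_\tau)_\xi$-equivariant trivialization of $\nu_\tau^{-1}(\xi)$) coincides with what the paper does, your treatment of the Poisson structures takes a genuinely different route. The paper never computes the reduced bivector for the $U_\tau$-reduction directly: it instead writes $X_\tau\cong(X\times(G\times\mathcal{S}_\tau))\sslash G$ using Proposition \ref{Proposition: PoissonSlice}, substitutes the Hamiltonian $G$-variety identification $G\times\mathcal{S}_\tau\cong T^*G\sslash_\xi U_\tau$, and then commutes the $G$- and $U_\tau$-reductions (reduction in stages) to land on $X\sslash_\xi U_\tau$ via the $\tau=0$ case of Remark \ref{Rem: tauNull}. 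You instead run the pointwise bivector comparison directly: the key observations that the adapted transversal extension $\tilde\alpha$ kills $\mathrm{span}\{V^{[\eta,b]}_x\}=P_x(T_xX_\tau^\dagger)$, that $(\uu_\tau)_\xi=\bigoplus_{\lambda\le -2}\g_\lambda\subseteq\g_\eta^\perp=[\g,\eta]$ by the weight-pairing of the Killing form, and hence that $\tilde\alpha$ is an admissible representative in the reduced-bivector formula with $P_x(\tilde\alpha)$ already tangent to $X_\tau$, are all accurate, and your dimension count $\dim(\uu_\tau)_\xi+\dim\g_\eta=\dim\p_\tau$ checks out. Your route is more self-contained (it does not need the symplectic identification $T^*G\sslash_\xi U_\tau\cong G\times\mathcal{S}_\tau$ as Hamiltonian $G$-varieties, nor the commutation of the two reductions, which the paper invokes without proof); the paper's route avoids any new pointwise computation by recycling the one already done in Proposition \ref{Proposition: PoissonSlice}. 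The one point you correctly flag as the remaining input in either approach is the surjectivity/isomorphism statement for the adjoint action map onto $\xi+\p_\tau$, which both you and the paper ultimately source from Bielawski and Gan--Ginzburg.
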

\begin{proof}
We begin by exhibiting $X_{\tau}$ as the geometric quotient of $\nu_{\tau}^{-1}(\xi)$ by $(U_{\tau})_{\xi}$. To this end, the proof of \cite[Lemma 3.2]{Bielawski} explains that
$$(U_{\tau})_{\xi}\times\mathcal{S}_{\tau}\longrightarrow\xi+\mathfrak{p}_{\tau},\quad (u,x)\longrightarrow\mathrm{Ad}_u(x)$$ 
defines a variety isomorphism. Composing the inverse of this isomorphism with the projection
$$(U_{\tau})_{\xi}\times\mathcal{S}_{\tau}\longrightarrow (U_{\tau})_{\xi}$$ 
then yields a map
$$\phi:\xi+\mathfrak{p}_{\tau}\longrightarrow (U_{\tau})_{\xi}.$$ 
Note that for $y\in\xi+\mathfrak{p}_{\tau}$, $\phi(y)$ is the unique element of $(U_{\tau})_{\xi}$ satisfying
$$\mathrm{Ad}_{\phi(y)^{-1}}(y)\in\mathcal{S}_{\tau}.$$
We may therefore define the map
$$\nu_{\tau}^{-1}(\xi)=\nu^{-1}(\xi+\mathfrak{p}_{\tau})\overset{\theta}\longrightarrow X_{\tau},\quad x\longrightarrow (\phi(\nu(x)))^{-1}\cdot x.$$
One has
$$\theta^{-1}(x)=(U_{\tau})_{\xi}\cdot x$$ for all $x\in\nu_{\tau}^{-1}(\xi)$, and we deduce that $\theta$ is the geometric quotient of $\nu_{\tau}^{-1}(\xi)$ by $(U_{\tau})_{\xi}$ (e.g. by \cite[Proposition 25.3.5]{Tauvel}). 

The previous paragraph establishes the following fact: Hamiltonian reductions of Hamiltonian $G$-varieties by $U_{\tau}$ at level $\xi$ always exist as geometric quotients. We implicitly use this observation in several places below.

To see that the Poisson structures on $X_\tau$ and $X\sslash_\xi U_\tau$ coincide, we argue as follows. One has a canonical isomorphism 
\begin{equation}\label{Equation: GStauKostantWhittaker} T^*G\sslash_\xi U_\tau\cong G\times\mathcal S_\tau\end{equation}
of symplectic varieties, where $U_{\tau}$ acts on $T^*G$ via \eqref{Equation: Action on cotangent} as the subgroup $U_{\tau}=\{e\}\times U_{\tau}\subseteq G\times G$ (see \cite[Lemma 3.2]{Bielawski}). Note also that $T^*G\sslash_\xi U_\tau$ and $G\times\mathcal S_\tau$ come with Hamiltonian actions of $G$ induced by the action of $G_L=G\times\{e\}$ on $T^*G\cong G\times\g$. One then readily verifies that \eqref{Equation: GStauKostantWhittaker} is an isomorphism of Hamiltonian $G$-varieties. 

Proposition \ref{Proposition: PoissonSlice} gives a canonical isomorphism of Poisson varieties 
$$X_\tau \cong (X\times (G\times \mathcal S_\tau))\sslash G.$$
The previous paragraph allows us to write this isomorphism as
$$X_\tau \cong (X\times (T^*G\sslash_\xi U_\tau))\sslash G=((X\times T^*G)\sslash_{\xi}U_{\tau})\sslash G,$$ where $U_{\tau}$ acts trivially on $X$. 
Since the actions of $G$ and $U_{\tau}$ on $X\times T^*G$ commute with one another, it follows that 
$$X_\tau \cong ((X\times T^*G)\sslash G)\sslash_\xi U_\tau.$$ 
An application of Remark \ref{Rem: tauNull} then yields
$$X_\tau \cong X\sslash_\xi U_\tau,$$
completing the proof.
\end{proof}

\subsection{Poisson slices in the log cotangent bundle of $\overline{G}$}\label{Subsection: KW}
Fix an $\sln_2$-triple $\tau$ in $\mathfrak{g}$ and recall the notation in Subsection \ref{Subsection: MomentLogCotangent}. In what follows, we study the Poisson slice
$$\overline{G\times\mathcal{S}_{\tau}}:=\overline{\rho}_R^{-1}(\mathcal{S}_\tau)\subseteq T^*\overline{G}(\log D)$$  
and its properties. We begin by observing that \begin{equation}\label{Equation: first} \overline{G\times\mathcal{S}_{\tau}} = \{(\gamma,(x,y))\in\overline{G}\times(\g\oplus\g)\colon (x,y)\in \gamma\text{ and }y\in\mathcal{S}_\tau\}.\end{equation}

A few simplifications arise if $\tau$ is a principal $\mathfrak{sl}_2$-triple. To this end, recall the adjoint quotient $$\chi:\g\longrightarrow\mathrm{Spec}(\mathbb{C}[\g]^G)$$ and the associated concepts and notation  discussed in Subsection \ref{Subsection: Lie theoretic conventions}. The image of $\overline{\rho}:T^*\overline{G}(\log D)\longrightarrow\g\oplus\g$ is known to be \begin{equation}\label{Equation: Image rho}\mathrm{image}(\overline{\rho})=\{(x,y)\in\g\oplus\g\colon \chi(x) = \chi(y)\}\end{equation}
(see \cite[Proposition 3.4]{BalibanuUniversal}). One consequence is that $x,y\in\g$ lie in the same fibre of $\chi$ whenever $(x,y)\in\gamma$ for some $\gamma\in\overline{G}$. Since $\mathcal{S}_{\tau}$ is a section of $\chi$, this fact combines with \eqref{Equation: first} to yield 
\begin{equation}\label{Equation: mu2Stau}
\overline{G\times\mathcal{S}_{\tau}} = \{(\gamma,(x,x_{\tau})):\gamma\in\overline{G},\text{ }x\in\g,\text{ and } (x,x_{\tau})\in \gamma\}.
\end{equation}

We now develop some more manifestly geometric properties of $\overline{G\times\mathcal{S}_{\tau}}$, beginning with the following result.

\begin{thm}\label{Theorem: irredgeneral}
If $\tau$ is an $\mathfrak{sl}_2$-triple in $\g$, then $\overline{G\times\mathcal{S}_{\tau}}$ is irreducible.
\end{thm}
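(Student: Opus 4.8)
\textbf{Proof proposal for Theorem \ref{Theorem: irredgeneral}.}

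The plan is to exhibit $\overline{G\times\mathcal{S}_{\tau}}$ as the closure of $G\times\mathcal{S}_{\tau}$ inside $T^*\overline{G}(\log D)$, and then invoke the irreducibility of $G\times\mathcal{S}_{\tau}$ together with the fact that the closure of an irreducible set is irreducible. Concretely, recall from Corollary \ref{Corollary: Symplectic subvariety} (or directly from the definitions) that $G\times\mathcal{S}_{\tau}\cong\rho_R^{-1}(\mathcal{S}_{\tau})\subseteq T^*G$, and that the open Poisson embedding $\tilde\varphi$ of \eqref{Equation: Embed} identifies this with an open subvariety of $\overline{G\times\mathcal{S}_{\tau}}=\overline{\rho}_R^{-1}(\mathcal{S}_{\tau})$, since $\overline\rho_R\circ\tilde\varphi = \rho_R$ by comparing \eqref{Equation: Second moment} and \eqref{Equation: First moment}. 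Because $G\times\mathcal{S}_{\tau}$ is irreducible — it is a product of the irreducible group $G$ with the affine space $\mathcal{S}_{\tau}=\xi+\g_{\eta}$ — its image under $\tilde\varphi$ is an irreducible locally closed subvariety of $\overline{G\times\mathcal{S}_{\tau}}$, so it suffices to prove that this image is dense in $\overline{G\times\mathcal{S}_{\tau}}$.

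The key step, and the one I expect to be the main obstacle, is the density claim: every point $(\gamma,(x,y))\in\overline{G\times\mathcal{S}_{\tau}}$ with $\gamma\in D=\overline{G}\setminus G$ must be approximated by points of $\tilde\varphi(G\times\mathcal{S}_{\tau})$. The natural approach is fibrewise over $\overline{G}$: the projection $\overline{G\times\mathcal{S}_{\tau}}\longrightarrow\overline{G}$ has fibre over $\gamma$ equal to $\{(x,y)\in\gamma : y\in\mathcal{S}_{\tau}\}$, which by \eqref{Equation: mu2Stau} (when $\tau$ is principal) or by a direct dimension count with \eqref{Equation: first} is an affine space of dimension $n-\dim\g_{\eta}$ plus $\dim\g_{\eta} = n = \dim\g$; more precisely each such fibre is a coset of the linear slice cut out by the condition $y\in\mathcal{S}_{\tau}$ inside the $n$-plane $\gamma$, hence irreducible of constant dimension. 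Combined with the irreducibility of $\overline{G}$ and the fact that $\overline{G\times\mathcal{S}_{\tau}}\longrightarrow\overline{G}$ is flat (being the restriction of the tautological bundle intersected with a fixed-codimension linear condition, or by generic flatness plus the constant fibre dimension), one concludes $\overline{G\times\mathcal{S}_{\tau}}$ is irreducible of dimension $\dim\overline{G}+ \dim\g - (\dim\g-\dim\g_\eta) = \dim G + \dim\g_\eta$, matching $\dim(G\times\mathcal{S}_{\tau})$, which forces the open subvariety $\tilde\varphi(G\times\mathcal{S}_{\tau})$ to be dense.

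Alternatively, and perhaps more cleanly, I would argue as follows: the map $\pi_\gamma\colon\overline{G\times\mathcal{S}_{\tau}}\longrightarrow\overline{G}$ is surjective with irreducible fibres of constant dimension, and $\overline{G}$ is irreducible (it is the closure of the irreducible group $G$ in the Grassmannian); a standard lemma (e.g. the fibre-dimension criterion for irreducibility, see \cite[Ch.~I, \S 6]{Tauvel}-type statements) then yields that $\overline{G\times\mathcal{S}_{\tau}}$ is irreducible. The point requiring care is the constancy of the fibre dimension over all of $\overline{G}$, including boundary points: one must check that for every $\gamma\in\overline{G}$ the $n$-plane $\gamma\subseteq\g\oplus\g$ meets $\g\oplus\mathcal{S}_{\tau}$ in an affine subspace of the expected dimension $\dim\g_\eta$. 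For boundary $\gamma$, one knows $\gamma$ is a limit of graphs of $\mathrm{Ad}_g$, and the projection $\g\oplus\g\to\g$ onto the second factor restricts to a surjection on each such limit plane (this is part of the structure of $\overline{G}$, and can be extracted from the description of the boundary via the Lie algebra $\lf\oplus\uu\oplus\uu^-$ type decompositions); surjectivity of $\gamma\to\g$ on the second factor forces $\gamma\cap(\g\oplus\mathcal{S}_{\tau})$ to have dimension exactly $\dim\mathcal{S}_{\tau}=\dim\g_\eta$. This is the technical heart, and I would either cite the relevant structural fact about $\overline{G}$ from \cite{DeConcini} or \cite{BalibanuUniversal}, or prove it by reducing to the case of a one-parameter degeneration along a cocharacter.
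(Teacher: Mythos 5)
Your reduction in the first paragraph is fine: $\tilde\varphi(G\times\mathcal{S}_{\tau})$ is an irreducible open subvariety of the smooth variety $\overline{G\times\mathcal{S}_{\tau}}$, so irreducibility is exactly the statement that no irreducible component lies entirely in the boundary, i.e.\ that $\tilde\varphi(G\times\mathcal{S}_{\tau})$ is dense. The gap is in the density argument, and it is a genuine one: the ``structural fact'' you lean on --- that for every $\gamma\in\overline{G}$ the second projection $\mathrm{pr}_2\colon\gamma\to\g$ is surjective --- is false at the boundary. Degenerating along a dominant cocharacter $\lambda$, the limit of the graphs $\{(\mathrm{Ad}_{\lambda(t)}y,y)\}$ as $t\to 0$ is the plane $\gamma_\lambda=(\mathfrak{u}^-\times\{0\})\oplus\mathfrak{l}_{\Delta}\oplus(\{0\}\times\mathfrak{u})$, whose second projection is the \emph{proper} parabolic subalgebra $\mathfrak{l}\oplus\mathfrak{u}$; the general boundary plane is a $(G\times G)$-translate of such a $\gamma_\lambda$. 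As a result, the projection $\overline{G\times\mathcal{S}_{\tau}}\to\overline{G}$ is not even surjective: already for $G=\mathrm{PGL}_2$ with $\tau$ principal, the boundary plane $\gamma=\mathbb{C}(e,0)\oplus\mathbb{C}(h,h)\oplus\mathbb{C}(0,f)$ has $\mathrm{pr}_2(\gamma)=\mathbb{C}h\oplus\mathbb{C}f$, which misses $\mathcal{S}_{\tau}=e+\mathbb{C}f$ entirely, so the fibre over this $\gamma$ is empty. More generally the fibre over $\gamma$ is $\gamma\cap(\g\oplus\mathcal{S}_{\tau})$, and its dimension equals $\dim\g_{\eta}$ only when $\mathrm{pr}_2(\gamma)+\g_{\eta}=\g$; when that transversality fails the fibre is either empty or of excess dimension. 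So the fibre-dimension criterion over $\overline{G}$ cannot be applied, and neither surjectivity, nor constancy of fibre dimension, nor irreducibility of the image is available without substantial further work. (Your parenthetical dimension count ``$n-\dim\g_\eta$ plus $\dim\g_\eta=n$'' also cannot be right, since the fibre over $g\in G$ is $\cong\mathcal{S}_{\tau}$ of dimension $\dim\g_{\eta}$.)

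For comparison, the paper avoids fibring over $\overline{G}$ altogether and instead fibres over the moment map image: it maps $\overline{G\times\mathcal{S}_{\tau}}$ onto $Y=\{(x,y)\in\g\times\mathcal{S}_{\tau}:\chi(x)=\chi(y)\}$ via $(\gamma,(x,y))\mapsto(x,y)$. This map is the pullback of $\overline{\rho}$, hence proper and surjective onto $Y$, and has connected fibres by B\u{a}libanu's \cite[Proposition 3.11]{BalibanuUniversal}; irreducibility of $Y$ is then established using faithful flatness of the adjoint quotient restricted to Slodowy slices (Slodowy, Premet) together with irreducibility and equidimensionality of its fibres. If you want to salvage your approach, the honest version of your ``technical heart'' is essentially equivalent to these inputs, so I would recommend adopting the moment-map fibration rather than the projection to $\overline{G}$.
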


\begin{proof}
	Consider the closed subvariety
	$$Y:=\{(x,y)\in \g\times\mathcal S_\tau :\chi(x) = \chi_\tau(y)\}\subseteq\g\oplus\g,$$
	where $\chi_{\tau}:\mathcal{S}_{\tau}\longrightarrow\mathrm{Spec}(\mathbb{C}[\g]^G)$ denotes the restriction of $\chi$ to $\mathcal{S}_{\tau}$.
	It follows from \eqref{Equation: first} and \eqref{Equation: Image rho} that
	\begin{equation}\label{Equation: Useful map}\overline{G\times\mathcal{S}_{\tau}}\longrightarrow Y,\quad (\gamma,(x,y))\longrightarrow (x,y)\end{equation} is the pullback of $\overline{\rho}:T^*\overline{G}(\log D)\longrightarrow\g\oplus\g$ along the inclusion $Y\hookrightarrow\g\oplus\g$, and that \eqref{Equation: Useful map} is surjective. One also knows that $\overline{\rho}$ is proper, as it results from restricting the natural projection $\overline{G}\times(\g\oplus\g)\longrightarrow\g\oplus\g$ to $T^*\overline{G}(\log D)\subseteq \overline{G}\times(\g\oplus\g)$. The surjection \eqref{Equation: Useful map} is therefore proper, while the proof of \cite[Proposition 3.11]{BalibanuUniversal} shows \eqref{Equation: Useful map} to have connected fibres. If $Y$ were connected, then the previous sentence would force $\overline{G\times\mathcal{S}_{\tau}}$ to be connected as well. This would in turn force $\overline{G\times\mathcal{S}_{\tau}}$ to be irreducible, as Poisson slices are smooth. 
	
	In light of the previous paragraph, it suffices to prove that $Y$ is irreducible. We begin by decomposing $\g$ into its simple factors, i.e. $$\g = \g_1\oplus\cdots\oplus\g_N$$ with each $\g_i$ a simple Lie algebra. Our $\mathfrak{sl}_2$-triple $\tau$ then amounts to having an $\mathfrak{sl}_2$-triple $\tau_i$ in $\g_i$ for each $i=1,\ldots,N$, yielding $$\mathcal S_{\tau} = \mathcal S_{\tau_1}\times\dots\times\mathcal S_{\tau_N}\subseteq\g_1\oplus\cdots\oplus\g_N.$$ It also follows that $\chi_{\tau}$ decomposes as a product $$\chi_\tau = (\chi_1)_{\tau_1}\times\dots\times(\chi_N)_{\tau_N},$$ where $\chi_i$ is the adjoint quotient map on $\g_i$ and $(\chi_i)_{\tau_i}$ is its restriction to $\mathcal{S}_{\tau_i}$. The results \cite[Corollary 7.4.1]{Slodowy} and \cite[Theorem 5.4]{PremetSpecial} then imply that each $(\chi_i)_{\tau_i}$ is faithfully flat with irreducible fibres of dimension $\dim(\mathcal S_{\tau_i}) -\mathrm{rank}(\g_i)$. These last two sentences imply that $\chi_{\tau}$ is faithfully flat with irreducible, equidimensional fibres, and the same argument forces $\chi$ to be faithfully flat with irreducible, equidimensional fibres. Since fibred products of faithfully flat morphisms are faithfully flat, we conclude that 
	$$\tilde\chi\colon Y\longrightarrow\mathrm{Spec}(\mathbb{C}[\g]^G), \quad (x,y)\mapsto \chi(x)$$
	is faithfully flat. We also conclude that 
	$$\tilde{\chi}^{-1}(t) = \chi^{-1}(t)\times \chi_\tau^{-1}(t)$$ must be irreducible for all $t\in\mathrm{Spec}(\mathbb{C}[\g]^G)$, and that its dimension must be independent of $t$. In other words, $\tilde{\chi}$ is a faithfully flat morphism with irreducible, equidimensional fibres. This combines with the irreducibility of $\mathrm{Spec}(\mathbb{C}[\g]^G)$ and \cite[Corollary 9.6]{Hartshorne} to imply that $Y$ is pure-dimensional. We may now apply the result in \cite{Mustata} and deduce that $Y$ is irreducible. This completes the proof.
\end{proof}

\begin{cor}
If $\tau$ is an $\mathfrak{sl}_2$-triple in $\g$, then $\overline{G\times\mathcal{S}_{\tau}}$ is log symplectic.
\end{cor}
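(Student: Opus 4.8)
The plan is to obtain this as an immediate consequence of Theorem~\ref{Theorem: irredgeneral} and Corollary~\ref{Corollary: Log symplectic}(i), so that no essentially new argument is required. First I would recall the setup of Subsection~\ref{Subsection: MomentLogCotangent}: the variety $T^*\overline{G}(\log D)$ is log symplectic, its unique open dense symplectic leaf is $T^*G$, and the $(G\times G)$-action \eqref{Equation: Action on log cotangent} is Hamiltonian with moment map $\overline{\rho}=(\overline{\rho}_L,\overline{\rho}_R)$ as in \eqref{Equation: Second moment}. Restricting to the subgroup $G_R=\{e\}\times G$ endows $T^*\overline{G}(\log D)$ with the structure of a Hamiltonian $G$-variety whose moment map is $\overline{\rho}_R$, and by the definition of $\overline{G\times\mathcal{S}_\tau}$ together with \eqref{Equation: first} we then have
$$\overline{G\times\mathcal{S}_\tau}=\overline{\rho}_R^{-1}(\mathcal{S}_\tau)=\bigl(T^*\overline{G}(\log D)\bigr)_\tau,$$
i.e.\ $\overline{G\times\mathcal{S}_\tau}$ is precisely the Poisson slice of the Hamiltonian $G$-variety $T^*\overline{G}(\log D)$ with respect to $\tau$.

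Next I would invoke Theorem~\ref{Theorem: irredgeneral}, which asserts that $\overline{G\times\mathcal{S}_\tau}$ is irreducible; hence it is its own unique irreducible component. Applying Corollary~\ref{Corollary: Log symplectic}(i) with $X=T^*\overline{G}(\log D)$, with $Z:=T^*\overline{G}(\log D)\setminus T^*G$ its log symplectic divisor, and with $Y=\overline{G\times\mathcal{S}_\tau}$ then shows directly that $\overline{G\times\mathcal{S}_\tau}$ is log symplectic, with divisor $\overline{G\times\mathcal{S}_\tau}\cap Z$. If desired, one can further note that the resulting open dense symplectic leaf is $\overline{G\times\mathcal{S}_\tau}\setminus Z=\overline{G\times\mathcal{S}_\tau}\cap T^*G$, which equals $G\times\mathcal{S}_\tau$ (a symplectic subvariety of $T^*G$ by Corollary~\ref{Corollary: Symplectic subvariety}).

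I do not anticipate any genuine obstacle, since Corollary~\ref{Corollary: Log symplectic} already packages the transversality argument of Proposition~\ref{Proposition: Poissonslicelogsymplectic} in exactly the form needed, and Theorem~\ref{Theorem: irredgeneral} supplies precisely the irreducibility hypothesis that the application of Corollary~\ref{Corollary: Log symplectic}(i) to $Y=\overline{G\times\mathcal{S}_\tau}$ demands. The only minor points to verify along the way are that $\overline{\rho}_R$ is indeed a moment map for the $G_R$-action on $T^*\overline{G}(\log D)$ (immediate from the formula for $\overline{\rho}$ in Subsection~\ref{Subsection: MomentLogCotangent}) and that $T^*\overline{G}(\log D)$ is log symplectic in the precise sense used here, both of which are recorded earlier.
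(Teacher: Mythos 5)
Your proposal is correct and follows exactly the paper's own route: the paper deduces the corollary immediately from Corollary \ref{Corollary: Log symplectic}(i) (applied to the Poisson slice $\overline{\rho}_R^{-1}(\mathcal{S}_\tau)$ of the log symplectic Hamiltonian $G_R$-variety $T^*\overline{G}(\log D)$) together with the irreducibility supplied by Theorem \ref{Theorem: irredgeneral}. Your additional remarks identifying the divisor as $\overline{G\times\mathcal{S}_\tau}\cap Z$ and the open dense leaf as $G\times\mathcal{S}_\tau$ are consistent with what the paper establishes elsewhere (Proposition \ref{Proposition: nice}).
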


\begin{proof}
This is an immediate consequence of Corollary \ref{Corollary: Log symplectic}(i) and Theorem \ref{Theorem: irredgeneral}.
\end{proof}

Now observe that the Hamiltonian action of $G_L=G\times\{e\}\subseteq G\times G$ on $T^*\overline{G}(\log D)$ restricts to a Hamiltonian action of $G$ on $\overline{G\times\mathcal{S}_{\tau}}$. An associated moment map is given by
$$\overline{\rho}_{\tau}:=\overline{\rho}_L\bigg\vert_{\overline{G\times\mathcal{S}_{\tau}}}:\overline{G\times\mathcal{S}_{\tau}}\longrightarrow\g,\quad (\gamma,(x,y))\mapsto x.$$ At the same time, recall the Hamiltonian $G$-variety structure on $G\times\mathcal{S}_{\tau}$ and the moment map $\rho_{\tau}:G\times\mathcal{S}_{\tau}\longrightarrow\g$ discussed in Subsection \ref{Subsection: Poisson slices via Hamiltonian reduction}. Let us also recall the map $\tilde{\varphi}:T^*G\longrightarrow T^*\overline{G}(\log D)$ from \eqref{Equation: Embed}.

\begin{prop}\label{Proposition: nice}
Let $\tau$ be an $\mathfrak{sl}_2$-triple in $\g$.
\begin{itemize}
\item[(i)] The map $\tilde{\varphi}:T^*G\longrightarrow T^*\overline{G}(\log D)$ restricts to a $G$-equivariant symplectomorphism from $G\times\mathcal{S}_{\tau}$ to the unique open dense symplectic leaf in $\overline{G\times\mathcal{S}_{\tau}}$.
\item[(ii)] The diagram \begin{equation}\begin{tikzcd}\label{Equation: Equivariant diagram}
G\times\mathcal{S}_\tau \arrow{rr}{\tilde{\varphi}\big\vert_{G\times\mathcal{S}_\tau}} \arrow[swap]{dr}{\rho_{\tau}}& & \overline{G\times\mathcal{S}_\tau} \arrow{dl}{\overline{\rho}_{\tau}}\\
& \mathfrak{g} & 
\end{tikzcd}\end{equation} commutes.
\end{itemize}
\end{prop}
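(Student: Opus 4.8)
The plan is to deduce both assertions from facts already in hand: $\tilde{\varphi}$ is a $(G\times G)$-equivariant symplectomorphism from $T^*G$ onto the open dense symplectic leaf of $T^*\overline{G}(\log D)$, the variety $\overline{G\times\mathcal{S}_\tau}$ is the Poisson slice of $T^*\overline{G}(\log D)$ for the residual action of $G_R=\{e\}\times G$, and Corollary \ref{Corollary: Log symplectic} controls its divisor and its symplectic leaf.

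First I would substitute the formula \eqref{Equation: Embed} for $\tilde{\varphi}$ into the moment maps. For $(g,y)\in T^*G$ one has $\tilde{\varphi}(g,y)=((g,e)\cdot\g_{\Delta},(\Ad_g(y),y))$, so \eqref{Equation: Second moment} gives $\overline{\rho}_L(\tilde{\varphi}(g,y))=\Ad_g(y)$ and $\overline{\rho}_R(\tilde{\varphi}(g,y))=y$; comparing with \eqref{Equation: First moment} and \eqref{Equation: Symplectic moment map} yields $\overline{\rho}_L\circ\tilde{\varphi}=\rho_L$ and $\overline{\rho}_R\circ\tilde{\varphi}=\rho_R$. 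The first identity restricts to $\overline{\rho}_\tau\circ(\tilde{\varphi}\big\vert_{G\times\mathcal{S}_\tau})=\rho_\tau$, which is exactly the commutativity of \eqref{Equation: Equivariant diagram}, settling (ii). The second identity is what (i) rests on. Equivariance of $\tilde{\varphi}\big\vert_{G\times\mathcal{S}_\tau}$ is immediate, since $\tilde{\varphi}$ is $(G\times G)$-equivariant and the $G$-actions on $G\times\mathcal{S}_\tau$ and $\overline{G\times\mathcal{S}_\tau}$ are the residual actions of $G_L=G\times\{e\}$.

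For (i), since $\tilde{\varphi}$ is an open immersion with image the open dense leaf $T^*G\subseteq T^*\overline{G}(\log D)$ and $\overline{\rho}_R\circ\tilde{\varphi}=\rho_R$, while $G\times\mathcal{S}_\tau=\rho_R^{-1}(\mathcal{S}_\tau)$ and $\overline{G\times\mathcal{S}_\tau}=\overline{\rho}_R^{-1}(\mathcal{S}_\tau)$, I would first observe that $\tilde{\varphi}$ maps $G\times\mathcal{S}_\tau$ biregularly onto $\tilde{\varphi}(T^*G)\cap\overline{G\times\mathcal{S}_\tau}$. I then need to recognise this intersection as the unique open dense symplectic leaf of $\overline{G\times\mathcal{S}_\tau}$. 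By Theorem \ref{Theorem: irredgeneral} the Poisson slice $\overline{G\times\mathcal{S}_\tau}$ is irreducible, so Corollary \ref{Corollary: Log symplectic}(i) --- applied to $T^*\overline{G}(\log D)$ with the $G_R$-action, whose divisor $Z$ is the complement of the leaf $T^*G$ --- shows $\overline{G\times\mathcal{S}_\tau}$ to be log symplectic with divisor $\overline{G\times\mathcal{S}_\tau}\cap Z$; its open dense symplectic leaf is therefore $\overline{G\times\mathcal{S}_\tau}\setminus Z=\overline{G\times\mathcal{S}_\tau}\cap T^*G$, which is the intersection in question. Finally, to see that $\psi:=\tilde{\varphi}\big\vert_{G\times\mathcal{S}_\tau}$ is a symplectomorphism onto this leaf, I would chase symplectic forms: Corollary \ref{Corollary: Symplectic subvariety} (equivalently Corollary \ref{Corollary: Log symplectic}(iii)) identifies the symplectic form of $G\times\mathcal{S}_\tau$ with the restriction of $\omega$ from $T^*G$, Corollary \ref{Corollary: Log symplectic}(ii) identifies the symplectic form of the leaf with the restriction of the symplectic form of $T^*G\subseteq T^*\overline{G}(\log D)$, and $\tilde{\varphi}$ pulls the latter back to $\omega$; hence $\psi$ pulls the leaf form back to the symplectic form of $G\times\mathcal{S}_\tau$.

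The one genuinely nontrivial point is the identification of the open dense symplectic leaf of $\overline{G\times\mathcal{S}_\tau}$ with $\overline{G\times\mathcal{S}_\tau}\cap T^*G$: a priori a Poisson slice could be disconnected, so without the irreducibility of Theorem \ref{Theorem: irredgeneral} and the divisor computation in Corollary \ref{Corollary: Log symplectic}(i) one could not conclude that this open stratum is the \emph{unique open dense} leaf. Everything else is routine bookkeeping with \eqref{Equation: Embed} and the moment-map formulas.
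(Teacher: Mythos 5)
Your proposal is correct and follows essentially the same route as the paper's proof: both identify the open dense leaf of $\overline{G\times\mathcal{S}_\tau}$ as $\overline{G\times\mathcal{S}_\tau}\cap\tilde{\varphi}(T^*G)$ via Corollary \ref{Corollary: Log symplectic} (with the irreducibility supplied by Theorem \ref{Theorem: irredgeneral}), verify $\tilde{\varphi}(G\times\mathcal{S}_\tau)=\overline{G\times\mathcal{S}_\tau}\cap\tilde{\varphi}(T^*G)$ from the moment-map identities, and then chase symplectic forms through Corollary \ref{Corollary: Symplectic subvariety} and Corollary \ref{Corollary: Log symplectic}(ii), with equivariance and part (ii) handled by the same direct computations. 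Your write-up is, if anything, slightly more explicit than the paper about where irreducibility enters.
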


\begin{proof}
By Corollary \ref{Corollary: Log symplectic}, the open dense symplectic leaf in $\overline{G\times\mathcal{S}_{\tau}}$ is obtained by intersecting $\overline{G\times\mathcal{S}_{\tau}}$ with the open dense symplectic leaf in $T^*\overline{G}(\log D)$. The latter leaf is $\tilde{\varphi}(T^*G)$, as is explained in Subsection \ref{Subsection: MomentLogCotangent}. It is also straightforward to establish that
$$\tilde{\varphi}(G\times\mathcal{S}_\tau)=\overline{G\times\mathcal{S}_{\tau}}\cap\tilde{\varphi}(T^*G).$$ These last two sentences show $\tilde{\varphi}(G\times\mathcal{S}_{\tau})$ to be the unique open dense symplectic leaf in $\overline{G\times\mathcal{S}_{\tau}}$. We also know that $\tilde{\varphi}$ restricts to a symplectomorphism from $G\times\mathcal{S}_\tau$ to $\tilde{\varphi}(G\times\mathcal{S}_\tau)$, where the symplectic form on $\tilde{\varphi}(G\times\mathcal{S}_\tau)$ is the pullback of the symplectic form on the leaf in $T^*\overline{G}(\log D)$ (see Corollary \ref{Corollary: Symplectic subvariety}). It now follows from Corollary \ref{Corollary: Log symplectic}(ii) that
$$\tilde{\varphi}\big\vert_{G\times\mathcal{S}_\tau}:G\times\mathcal{S}_\tau\longrightarrow\tilde{\varphi}(G\times\mathcal{S}_\tau)$$
is a symplectomorphism with respect to this symplectic structure $\tilde{\varphi}(G\times\mathcal{S}_\tau)$ inherits as a leaf in $\overline{G\times\mathcal{S}_{\tau}}$. This symplectomorphism is $G$-equivariant, as $\tilde{\varphi}:T^*G\longrightarrow T^*\overline{G}(\log D)$ is ($G\times G$)-equivariant. The proof of (i) is therefore complete, while a straightforward calculation yields (ii).
\end{proof}

\begin{rem}\label{Remark: Projective fibres}
	Let $\tau$ be a principal $\mathfrak{sl}_2$-triple in $\g$. The description \eqref{Equation: mu2Stau} allows one to define a closed embedding
	$$\overline{G\times\mathcal{S}_\tau}\longrightarrow\overline{G}\times\g,\quad(\gamma,(x,x_{\tau}))\longrightarrow (\gamma,x).$$ We thereby obtain a commutative diagram
	$$\begin{tikzcd}
	\overline{G\times\mathcal{S}_\tau} \arrow{rr} \arrow[swap]{dr}{\overline{\rho}_{\tau}}& & \overline{G}\times\g \arrow{dl}\\
	& \mathfrak{g} &  
	,\end{tikzcd}$$
	where $\overline{G}\times\g\longrightarrow\g$ is projection to the second factor. One immediate consequence is that $\overline{\rho}_{\tau}$ has projective fibres, so that \eqref{Equation: Equivariant diagram} realizes $\overline{\rho}_{\tau}$ as a fibrewise compactification of $\rho_{\tau}$. It also follows that 
	$$\overline{\rho}_{\tau}^{-1}(x)\longrightarrow\{\gamma\in\overline{G}:(x,x_{\tau})\in\gamma\},\quad (\gamma,(x,x_{\tau}))\longrightarrow\gamma$$ 
	is a variety isomorphism for each $x\in\g$. 
\end{rem}

\subsection{Relation to the universal centralizer and its fibrewise compactification}\label{Subsection: Relation} 
Let $\tau$ be a principal $\mathfrak{sl}_2$-triple in $\g$. It is instructive to examine the relationship between $G\times\mathcal{S}_{\tau}$ and $\overline{G\times\mathcal{S}_{\tau}}$ in the context of Balib\u{a}nu's paper \cite{BalibanuUniversal}. We begin by recalling that the \textit{universal centralizer} of $\g$ is the closed subvariety of $T^*G=G\times\g$ defined by
$$\mathcal{Z}_{\g}^{\tau}:=\{(g,x)\in G\times\g:x\in\mathcal{S}_{\tau}\text{ and }g\in G_x\},$$ where $G_x$ is the $G$-stabilizer of $x\in\g$. At the same time, recall the Hamiltonian action of $G\times G$ on $T^*G$ and moment map $\rho:T^*G\longrightarrow\g\oplus\g$ discussed in Subsection \ref{Subsection: MomentLogCotangent}. Consider the product $\mathcal{S}_{\tau}\times\mathcal{S}_{\tau}\subseteq\g\oplus\g$ and observe that
$$\mathcal{Z}_{\mathfrak{g}}^{\tau}=\rho^{-1}(\mathcal{S}_{\tau}\times\mathcal{S}_{\tau}).$$
Note also that $\mathcal{S}_{\tau}\times\mathcal{S}_{\tau}$ is the Slodowy associated to the $\mathfrak{sl}_2$-triple $((\xi,\xi),(h,h),(\eta,\eta))$. It follows that $\mathcal{Z}_{\g}^ {\tau}$ is a Poisson slice in $T^*G$. Corollary \ref{Corollary: Log symplectic}(iii) then forces this Poisson slice to be a symplectic subvariety of $T^*G$. 

\begin{rem}\label{Remark: Previous}
	Some papers realize the symplectic structure on $\mathcal{Z}_{\mathfrak{g}}^{\tau}$ via a Whittaker reduction of $T^*G$ (e.g. \cite{BalibanuUniversal}). To this end, let $U\subseteq G$ be the unipotent subgroup with Lie algebra $\mathfrak{u}$. Proposition \ref{Proposition:PoissonSlice} then gives a canonical isomorphism
	$$\mathcal{Z}_{\mathfrak{g}}^{\tau}=\rho^{-1}(\mathcal{S}_{\tau}\times\mathcal{S}_{\tau})\cong T^*G\sslash_{(\xi,\xi)}U\times U$$
	of symplectic varieties, where the symplectic structure on $\mathcal{Z}_{\mathfrak{g}}^{\tau}$ is as defined in the previous paragraph. 
\end{rem}

One may replace $\rho:T^*G\longrightarrow\g\oplus\g$ with $\overline{\rho}:T^*\overline{G}(\log D)\longrightarrow\g\oplus \g$ and proceeed analogously. In the interest of being more precise, consider the Poisson slice
$$\overline{\mathcal{Z}_{\g}^{\tau}}:=\overline{\rho}^{-1}(\mathcal{S}_{\tau}\times\mathcal{S}_{\tau})=\{(\gamma,(x,x)):\gamma\in\overline{G},\text{ }x\in\mathcal{S}_{\tau},\text{ and }(x,x)\in\gamma\}$$ in $T^*\overline{G}(\log D)$.

\begin{rem}
	A counterpart of Remark \ref{Remark: Previous} is that Proposition \ref{Proposition:PoissonSlice} gives a canonical isomorphism
	$$\overline{\mathcal{Z}_{\mathfrak{g}}^{\tau}}=\overline{\rho}^{-1}(\mathcal{S}_{\tau}\times\mathcal{S}_{\tau})\cong T^*\overline{G}(\log D)\sslash_{(\xi,\xi)}U\times U$$ of Poisson varieties. This realization of $\overline{\mathcal{Z}_{\mathfrak{g}}^{\tau}}$ via Whittaker reduction is used to great effect in \cite{BalibanuUniversal}. 
\end{rem}

Now recall the embedding $\tilde{\varphi}:T^*G\longrightarrow T^*\overline{G}(\log D)$ discussed in Subsection \ref{Subsection: MomentLogCotangent}. Balib\u{a}nu \cite{BalibanuUniversal} shows $\overline{\mathcal{Z}_{\g}^{\tau}}$ to be log symplectic (cf. Corollary \ref{Corollary: Log symplectic}), and that $\tilde{\varphi}$ restricts to a symplectomorphism from $\mathcal{Z}_{\g}^{\tau}$ to the unique open dense symplectic leaf in $\overline{\mathcal{Z}_{\g}^{\tau}}$. One also has a commutative diagram
\begin{equation}\begin{tikzcd}\label{Equation: Non-equivariant diagram}
\mathcal{Z}_{\g}^{\tau} \arrow{rr}{\tilde{\varphi}\big\vert_{\mathcal{Z}_{\g}^{\tau}}} \arrow[swap]{dr}{q_{\tau}}& & \overline{\mathcal{Z}_{\g}^{\tau}} \arrow{dl}{\overline{q}_{\tau}}\\
& \mathcal{S}_{\tau} & 
\end{tikzcd},\end{equation}
where $$q_{\tau}(g,x)=x\quad\text{and}\quad\overline{q}_{\tau}(\gamma,(x,x))=x.$$ This diagram is seen to be the pullback of \eqref{Equation: Equivariant diagram} along the inclusion $\mathcal{S}_{\tau}\hookrightarrow\g$, and it thereby exhibits $\overline{q_{\tau}}$ as a fibrewise compactification of $q_{\tau}$ (see Remark \ref{Remark: Projective fibres} and cf. \cite[Section 3]{BalibanuUniversal}). This amounts to \eqref{Equation: Non-equivariant diagram} being the restriction of \eqref{Equation: Equivariant diagram} to a morphism between the Poisson slices 
$$\mathcal{Z}_{\g}^{\tau}=\rho^{-1}(\mathcal{S}_{\tau}\times\mathcal{S}_{\tau})=\rho_{\tau}^{-1}(\mathcal{S}_{\tau})\quad\text{and}\quad\overline{\mathcal{Z}_{\g}^{\tau}}=\overline{\rho}^{-1}(\mathcal{S}_{\tau}\times\mathcal{S}_{\tau})=\overline{\rho}_{\tau}^{-1}(\mathcal{S}_{\tau}).$$

This present section combines with Subsection \ref{Subsection: KW} to yield the following informal comparisons between $(\mathcal{Z}_{\mathfrak{g}}^{\tau},\overline{\mathcal{Z}_{\g}^{\tau}})$ and $(G\times\mathcal{S}_{\tau},\overline{G\times\mathcal{S}_{\tau}})$:     
\begin{itemize}
	\item $\overline{q_{\tau}}$ (resp. $\overline{\rho}_{\tau}$) is a fibrewise compactification of $\pi_{\tau}$ (resp. $q_{\tau}$);
	\item \eqref{Equation: Non-equivariant diagram} is obtained by pulling \eqref{Equation: Equivariant diagram} back along the inclusion $\mathcal{S}_{\tau}\hookrightarrow\g$;
	\item $\mathcal{Z}_{\g}^{\tau}$ and $G\times\mathcal{S}_{\tau}$ are symplectic;
	\item $\overline{\mathcal{Z}_{\g}^{\tau}}$ and $\overline{G\times\mathcal{S}_{\tau}}$ are log symplectic;
	\item $\tilde{\varphi}$ restricts to a symplectomorphism from $\mathcal{Z}_{\g}^{\tau}$ (resp. $G\times\mathcal{S}_{\tau}$) to the unique open dense symplectic leaf in $\overline{\mathcal{Z}_{\g}^{\tau}}$ (resp. $\overline{G\times\mathcal{S}_{\tau}}$).
\end{itemize}

\section{The geometries of $\overline{X}$ and $\overline{X}_{\tau}$}\label{Section: Xbar}
This section is concerned with constructing partial compactifications of Poisson slices, an issue motivated in the introduction of our paper. Our approach is to replace a Poisson slice $X_{\tau}$ with a slightly larger variety $\overline{X}_{\tau}$, provided that the latter makes sense. If $\overline{X}_{\tau}$ is well-defined, we show it to enjoy certain Poisson-geometric features and discuss the extent to which it partially compactifies $X_{\tau}$.    

\subsection{Definitions and first properties}\label{Subsection: Definition and first properties}
Fix a Hamiltonian $G$-variety $(X,P,\nu)$ and an $\mathfrak{sl}_2$-triple $\tau$ in $\g$.  The product Hamiltonian $G$-varieties $X\times (G\times\mathcal{S}_{\tau})$ and $X\times(\overline{G\times\mathcal{S}_{\tau}})$ then have respective moment maps 
$$\mu_{\tau}:X\times (G\times\mathcal{S}_{\tau})\longrightarrow\g,\quad (x,(g,y))\longrightarrow\nu(x)-\mathrm{Ad}_g(y)$$ and
$$\overline{\mu}_{\tau}:X\times (\overline{G\times\mathcal{S}_{\tau}})\longrightarrow\g,\quad (x,(\gamma,(y_1,y_2)))\longrightarrow\nu(x)-y_1.$$ 
Note also that taking the product of $$\tilde{\varphi}\big\vert_{G\times\mathcal{S}_{\tau}}:G\times\mathcal{S}_{\tau}\longrightarrow\overline{G\times\mathcal{S}_{\tau}}$$ with the identity $X\longrightarrow X$ produces a $G$-equivariant open Poisson embedding
\begin{align}\label{Equation: i_tau}
i_{\tau}:X \times (G\times\mathcal{S}_{\tau}) &\longrightarrow X\times (\overline{G\times\mathcal{S}_{\tau}})\\\nonumber
(x,(g,y)) &\longrightarrow (x,((g,e)\cdot\g_{\Delta},(\mathrm{Ad}_g(y),y))).
\end{align}
(see Proposition \ref{Proposition: nice}). One readily verifies that the diagram
\begin{equation}\begin{tikzcd}\label{Equation: Slice diag}
X\times(G\times\mathcal{S}_{\tau}) \arrow{rr}{i_{\tau}} \arrow[swap]{dr}{\mu_{\tau}}& & X\times(\overline{G\times\mathcal{S}_{\tau}}) \arrow{dl}{\overline{\mu}_{\tau}}\\
& \mathfrak{g} & 
\end{tikzcd}\end{equation}
commutes.

Now recall the Hamiltonian ($G\times G$)-variety $X\times T^*\overline{G}(\log D)$
and moment map
$$\overline{\mu}=(\overline{\mu}_L,\overline{\mu}_R): X\times T^*\overline{G}(\log D)\longrightarrow\g\oplus\g$$ from Subsection \ref{Subsection: MomentLogCotangent}. Let us write
$$\overline{X}:=(X\times T^*\overline{G}(\log D))\sslash G_L\quad\text{and}\quad\overline{X}_{\tau}:=(X\times (\overline{G\times\mathcal{S}}))\sslash G,$$
and understand ``$\overline{X}$ exists" (resp. ``$\overline{X}_{\tau}$ exists") to mean that $(X\times T^*\overline{G}(\log D))\sslash G_L$ (resp. $(X\times (\overline{G\times\mathcal{S}}))\sslash G$) exists as a geometric quotient. 

\begin{rem}
If $\tau=0$, then $X\times (\overline{G\times\mathcal{S}_{\tau}})=X\times T^*\overline{G}(\log D)$, $\overline{\mu}_{\tau}=\overline{\mu}_L$, and the $G$-action on $X\times (\overline{G\times\mathcal{S}_{\tau}})$ is the $G_L$-action $X\times T^*\overline{G}(\log D)$. One immediate consequence is that $\overline{X}=\overline{X}_0$.
\end{rem}

\begin{rem}
The action of $G_R$ on $X\times T^*\overline{G}(\log D)$ induces a residual $G$-action on $\overline{X}$, provided that $\overline{X}$ exists. This $G$-action features prominently in what follows.
\end{rem}

It is reasonable to seek conditions under which $\overline{X}$ and $\overline{X}_{\tau}$ exist. We defer this matter to Section \ref{Section: Examples}, which is largely devoted to examples. In the interim, we assume that $\overline{X}_{\tau}$ exists. Let us also recall the map $i:X\times T^*G\longrightarrow X\times T^*\overline{G}(\log D)$ from \eqref{Equation: Equivariant embedding}. This map restricts to a $G$-equivariant open embedding \begin{equation}\label{Equation: Restricted}i\big\vert_{\mu_\tau^{-1}(0)}:\mu_\tau^{-1}(0)\hookrightarrow\overline{\mu}_\tau^{-1}(0),\end{equation} which in turn descends to a morphism
\begin{equation}\label{Equation: Descent} j_\tau:(X\times (G\times\mathcal{S}_{\tau}))\sslash G\longrightarrow\overline{X}_{\tau}.\end{equation} Let us consider the composition 
\begin{equation}\label{Equation: Definition of ktau} k_{\tau}:=j_{\tau}\circ\psi_{\tau}:X_{\tau}\longrightarrow\overline{X}_{\tau},\end{equation} where $\psi_{\tau}:X_{\tau}\longrightarrow (X\times (G\times\mathcal{S}_{\tau}))\sslash G$ is the Poisson variety isomorphism from \eqref{Equation: Ham iso}. It is straightforward to verify that
\begin{equation}\label{Equation: Formula2}k_{\tau}(x)=[x:(\mathfrak{g}_{\Delta},(\nu(x),\nu(x)))]\end{equation} for all $x\in X_{\tau}$

\begin{prop}\label{Proposition: Equivariant open}
Let $\tau$ be an $\mathfrak{sl}_2$-triple in $\g$. If $\overline{X}_{\tau}$ exists, then $k_{\tau}:X_{\tau}\longrightarrow\overline{X}_{\tau}$ is an open embedding.
\end{prop}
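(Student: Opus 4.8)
The plan is to reduce the claim to a statement about $j_{\tau}$, and then to deduce that statement from the behaviour of geometric quotients under passage to saturated open subvarieties. Since $\psi_{\tau}\colon X_{\tau}\to(X\times(G\times\mathcal{S}_{\tau}))\sslash G$ is an isomorphism of Poisson varieties by Proposition \ref{Proposition: PoissonSlice}, and $k_{\tau}=j_{\tau}\circ\psi_{\tau}$ by \eqref{Equation: Definition of ktau}, it suffices to show that $j_{\tau}$ is an open embedding. Observe also that $(X\times(G\times\mathcal{S}_{\tau}))\sslash G$ exists as a geometric quotient, being isomorphic to $X_{\tau}$ via $\psi_{\tau}$.

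First I would set $W:=i\big(\mu_{\tau}^{-1}(0)\big)\subseteq\overline{\mu}_{\tau}^{-1}(0)$, the image of the $G$-equivariant open embedding $i\big\vert_{\mu_{\tau}^{-1}(0)}$ appearing in \eqref{Equation: Restricted}. Then $W$ is open in $\overline{\mu}_{\tau}^{-1}(0)$ and $G$-invariant, while the geometric quotient $\pi\colon\overline{\mu}_{\tau}^{-1}(0)\to\overline{X}_{\tau}$ exists by hypothesis. Because the fibres of $\pi$ are precisely the $G$-orbits, $G$-invariance of $W$ forces $\pi^{-1}(\pi(W))=W$; that is, $W$ is $\pi$-saturated. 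I would then use that a good quotient is a topological quotient map: if $Z\subseteq\overline{X}_{\tau}$ has $\pi^{-1}(Z)$ closed, then $\pi^{-1}(Z)$ is closed and $G$-invariant, so $Z=\pi(\pi^{-1}(Z))$ is closed by the defining properties of a good quotient. Applying this with $Z=\overline{X}_{\tau}\setminus\pi(W)$ (whose preimage is the closed set $\overline{\mu}_{\tau}^{-1}(0)\setminus W$) shows $\pi(W)$ to be open in $\overline{X}_{\tau}$, and Proposition \ref{Proposition: Restriction of quotient} (whose hypotheses hold, since $G$ is connected and reductive) shows that $\pi\big\vert_{W}\colon W\to\pi(W)$ is a geometric quotient of $W$ by $G$.

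It remains to identify $j_{\tau}$ with the open immersion $\pi(W)\hookrightarrow\overline{X}_{\tau}$. The $G$-equivariant isomorphism $i\big\vert_{\mu_{\tau}^{-1}(0)}\colon\mu_{\tau}^{-1}(0)\to W$ induces a canonical isomorphism of geometric quotients $(X\times(G\times\mathcal{S}_{\tau}))\sslash G=\mu_{\tau}^{-1}(0)/G\to W/G=\pi(W)$, and tracing through the construction of $j_{\tau}$ in \eqref{Equation: Descent} shows that $j_{\tau}$ is this isomorphism followed by $\pi(W)\hookrightarrow\overline{X}_{\tau}$. Hence $j_{\tau}$, and therefore $k_{\tau}=j_{\tau}\circ\psi_{\tau}$, is an open embedding. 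The one step that genuinely requires care is the passage from ``$W$ open, $G$-invariant, and $\pi$-saturated'' to ``$\pi(W)$ open in $\overline{X}_{\tau}$'', which rests on the good-quotient axioms and should be spelled out as above; everything else is bookkeeping with the already-established isomorphisms $\psi_{\tau}$ and $i\big\vert_{\mu_{\tau}^{-1}(0)}$.
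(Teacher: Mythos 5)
Your proof is correct, and while it starts from the same reduction as the paper (factor $k_{\tau}=j_{\tau}\circ\psi_{\tau}$ and study $j_{\tau}$ via the square built from $i\big\vert_{\mu_{\tau}^{-1}(0)}$ and the two quotient maps), the way you close the argument is genuinely different. The paper observes that the vertical maps are open (citing that geometric quotients are open maps), deduces that $j_{\tau}$ is an open map, and then concludes from injectivity that $j_{\tau}$ is an open embedding. You instead pass to the saturated open set $W=i(\mu_{\tau}^{-1}(0))$, show $\pi(W)$ is open directly from the good-quotient axioms, invoke Proposition \ref{Proposition: Restriction of quotient} to see that $\pi\big\vert_W\colon W\to\pi(W)$ is itself a geometric quotient, and then identify $j_{\tau}$ with the isomorphism of quotients induced by the $G$-equivariant isomorphism $\mu_{\tau}^{-1}(0)\cong W$, followed by the inclusion $\pi(W)\hookrightarrow\overline{X}_{\tau}$. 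This buys you something real: ``injective and open'' does not by itself make a morphism of varieties an isomorphism onto its (open) image at the level of structure sheaves (think of a bijective normalization map), so the paper's final step is slightly terse, whereas your appeal to the uniqueness of geometric quotients yields the scheme-theoretic isomorphism onto $\pi(W)$ for free. The price is a little more bookkeeping, all of which you carry out correctly; in particular your verification that $W$ is $\pi$-saturated and that $\pi(W)$ is open is exactly the standard argument and is consistent with the paper's definitions of good and geometric quotients.
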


\begin{proof}
Since $\psi_{\tau}$ is a variety isomorphism, it suffices to prove that $j_{\tau}$ is an open embedding. We achieve this by first considering the commutative square \begin{equation}\label{Equation: Square}\begin{tikzcd}
\mu_\tau^{-1}(0) \arrow[r, "i\big\vert_{\mu_\tau^{-1}(0)}"] \arrow[d]
& \overline{\mu}_\tau^{-1}(0) \arrow[d] \\
(X\times (G\times\mathcal{S}_{\tau}))\sslash G \arrow[r, "j_\tau"]
& \overline{X}_{\tau}
\end{tikzcd}.\end{equation}
The vertical morphisms are open maps by virtue of being geometric quotients \cite[Lemma 25.3.2]{Tauvel}, and we have explained that the upper horizontal map is open. It follows that $j_{\tau}$ is also an open map. Together with the observation that $j_{\tau}$ is injective, this implies that $j_{\tau}$ is an open embedding. Our proof is complete.
\end{proof}

The inclusion $X_{\tau}\longrightarrow X$ composes with the quotient map $X\longrightarrow X/G$ to yield
\begin{equation}\label{Equation: First quotient}\pi_{\tau}:X_{\tau}\longrightarrow X/G,\end{equation} provided that $X/G$ exists. We may also consider the 
morphism
\begin{equation}\label{Equation: Second quotient}\overline{\pi}_{\tau}:\overline{X}_{\tau}\longrightarrow X/G,\quad [x:(\gamma,(y_1,y_2))]\longrightarrow [x]\end{equation} if both $\overline{X}_{\tau}$ and $X/G$ exist. The following is then an immediate consequence of \eqref{Equation: Formula2}.

\begin{prop}\label{Proposition: X quotient commute}
Let $\tau$ be an $\mathfrak{sl}_2$-triple in $\g$. If $\overline{X}_{\tau}$ and $X/G$ exist, then the diagram
\begin{equation}\begin{tikzcd}\label{Equation: Partial compactification}
X_{\tau} \arrow{rr}{k_{\tau}} \arrow[swap]{dr}{\pi_{\tau}}& & \overline{X}_{\tau} \arrow{dl}{\overline{\pi}_{\tau}}\\
& X/G & 
\end{tikzcd}\end{equation}
commutes.
\end{prop}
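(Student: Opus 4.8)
The plan is to verify the identity $\overline{\pi}_{\tau}\circ k_{\tau}=\pi_{\tau}$ directly, using the explicit descriptions of the three maps already recorded above. Since both $\overline{X}_{\tau}$ and $X/G$ are assumed to exist as geometric quotients, all the morphisms in \eqref{Equation: Partial compactification} are defined, and the commutativity is a pointwise computation.

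First I would recall (or briefly note) that $\overline{\pi}_{\tau}$ is a genuine morphism: the composition $\overline{\mu}_{\tau}^{-1}(0)\hookrightarrow X\times(\overline{G\times\mathcal{S}_{\tau}})\longrightarrow X\longrightarrow X/G$ of the inclusion, the projection to the $X$-factor, and the quotient map is $G$-invariant, because $G$ acts diagonally on $X\times(\overline{G\times\mathcal{S}_{\tau}})$ and the composite only retains the $X$-coordinate modulo $G$. The universal property of the geometric quotient $\overline{\mu}_{\tau}^{-1}(0)\longrightarrow\overline{X}_{\tau}$ then yields the morphism $\overline{\pi}_{\tau}$ of \eqref{Equation: Second quotient}, characterised by $\overline{\pi}_{\tau}([x:(\gamma,(y_1,y_2))])=[x]$.

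Next, for $x\in X_{\tau}$, formula \eqref{Equation: Formula2} gives $k_{\tau}(x)=[x:(\g_{\Delta},(\nu(x),\nu(x)))]$, so that $\overline{\pi}_{\tau}(k_{\tau}(x))=[x]$. On the other hand, $\pi_{\tau}$ is by construction the restriction to $X_{\tau}$ of the quotient map $X\longrightarrow X/G$, whence $\pi_{\tau}(x)=[x]$ as well. As $x\in X_{\tau}$ was arbitrary, this establishes the commutativity of \eqref{Equation: Partial compactification}.

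The only point requiring any care is the well-definedness of $\overline{\pi}_{\tau}$, i.e.\ that the $X$-coordinate of a point of $\overline{\mu}_{\tau}^{-1}(0)$ descends to $X/G$; this is routine given that the $G$-action on the product is diagonal, and the compatibility with $k_{\tau}$ is then immediate from \eqref{Equation: Formula2}. I do not anticipate any serious obstacle.
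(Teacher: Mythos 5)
Your argument is correct and matches the paper's, which simply observes that commutativity is an immediate consequence of the formula $k_{\tau}(x)=[x:(\g_{\Delta},(\nu(x),\nu(x)))]$ from \eqref{Equation: Formula2}. Your additional remark on the well-definedness of $\overline{\pi}_{\tau}$ via the universal property of the geometric quotient is a reasonable elaboration of a point the paper leaves implicit.
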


This diagram is particularly noteworthy if $\tau$ is a principal $\mathfrak{sl}_2$-triple.

\begin{thm}\label{Theorem: Compact}
Let $\tau$ be a principal $\mathfrak{sl}_2$-triple in $\g$. If $\overline{X}_{\tau}$ and $X/G$ exist, then the diagram \eqref{Equation: Partial compactification} realizes $\overline{\pi}_{\tau}$ as a fibrewise compactification of $\pi_{\tau}$. 
\end{thm}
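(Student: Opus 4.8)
The plan is to verify the three features that make \eqref{Equation: Partial compactification} a fibrewise compactification: the triangle commutes, the horizontal arrow $k_{\tau}$ is an open embedding, and $\overline{\pi}_{\tau}$ has complete fibres, each containing the corresponding fibre of $\pi_{\tau}$ as a dense open subvariety. The first two are already in hand --- commutativity is Proposition~\ref{Proposition: X quotient commute}, and $k_{\tau}$ is an open embedding by Proposition~\ref{Proposition: Equivariant open} --- so the real content is the analysis of the fibres of $\overline{\pi}_{\tau}$, and it is here that principality of $\tau$ enters.

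First I would describe the fibres of $\overline{\pi}_{\tau}$. Since $\tau$ is principal, \eqref{Equation: mu2Stau} identifies $\overline{\mu}_{\tau}^{-1}(0)$ with $\{(x,\gamma)\in X\times\overline{G}:(\nu(x),\nu(x)_{\tau})\in\gamma\}$, on which $G$ acts diagonally, and the $G$-equivariant projection $f\colon\overline{\mu}_{\tau}^{-1}(0)\to X$, $(x,\gamma)\mapsto x$, satisfies $\overline{\pi}_{\tau}\circ q=q'\circ f$, where $q\colon\overline{\mu}_{\tau}^{-1}(0)\to\overline{X}_{\tau}$ and $q'\colon X\to X/G$ are the quotient maps. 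Fix $x_{0}\in X$ and put $y_{0}:=\nu(x_{0})$. As $q'^{-1}([x_{0}])=G\cdot x_{0}$ is closed and $G$-invariant in $X$, its preimage $f^{-1}(G\cdot x_{0})$ is closed and $G$-invariant in $\overline{\mu}_{\tau}^{-1}(0)$; Proposition~\ref{Proposition: Restriction of quotient} (with $G$ connected reductive) then exhibits $\overline{\pi}_{\tau}^{-1}([x_{0}])=q(f^{-1}(G\cdot x_{0}))$ as the geometric quotient of $f^{-1}(G\cdot x_{0})$ by $G$. Writing $G\cdot x_{0}\cong G/G_{x_{0}}$ and $f^{-1}(G\cdot x_{0})\cong G\times_{G_{x_{0}}}f^{-1}(x_{0})$, this quotient becomes $f^{-1}(x_{0})/G_{x_{0}}$. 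Finally $f^{-1}(x_{0})\cong\overline{\rho}_{\tau}^{-1}(y_{0})\cong\{\gamma\in\overline{G}:(y_{0},(y_{0})_{\tau})\in\gamma\}$, a closed --- hence projective --- subvariety of $\overline{G}$ (cf.\ Remark~\ref{Remark: Projective fibres}) on which $G_{x_{0}}$ acts through $G\times\{e\}$ (which makes sense because $G_{x_{0}}$ fixes $y_{0}$ under the adjoint action). Thus $\overline{\pi}_{\tau}^{-1}([x_{0}])\cong\overline{\rho}_{\tau}^{-1}(y_{0})/G_{x_{0}}$ is the image of a projective variety under a morphism, hence complete.

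The same bookkeeping, carried out with $G\times\mathcal{S}_{\tau}$ in place of $\overline{G\times\mathcal{S}_{\tau}}$ by means of the isomorphism $\psi_{\tau}$ of \eqref{Equation: Ham iso} and the formula \eqref{Equation: Formula2} for $k_{\tau}$, identifies $\pi_{\tau}^{-1}([x_{0}])$ with $\rho_{\tau}^{-1}(y_{0})/G_{x_{0}}$, where $\rho_{\tau}^{-1}(y_{0})$ is the trace of $\overline{\rho}_{\tau}^{-1}(y_{0})$ on the open dense symplectic leaf $\tilde{\varphi}(T^{*}G)\cap\overline{G\times\mathcal{S}_{\tau}}$ (Proposition~\ref{Proposition: nice}), and $k_{\tau}$ restricts fibrewise to the descent of the open inclusion $\rho_{\tau}^{-1}(y_{0})\hookrightarrow\overline{\rho}_{\tau}^{-1}(y_{0})$. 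Since $\tau$ is principal we have $\mathcal{S}_{\tau}\subseteq\g^{\mathrm{r}}$, so $\pi_{\tau}^{-1}([x_{0}])$ is nonempty precisely when $y_{0}$ is regular; and for such $y_{0}$ the $G$-equivariance of $\overline{\rho}_{\tau}$ gives $\overline{\rho}_{\tau}^{-1}(y_{0})\cong\overline{\rho}_{\tau}^{-1}((y_{0})_{\tau})=\overline{q}_{\tau}^{-1}((y_{0})_{\tau})$, which is irreducible by \cite{BalibanuUniversal} (cf.\ the proof of Theorem~\ref{Theorem: irredgeneral}). Hence $\rho_{\tau}^{-1}(y_{0})$, being a nonempty open subvariety of the irreducible variety $\overline{\rho}_{\tau}^{-1}(y_{0})$, is dense in it, and after passing to quotients $\pi_{\tau}^{-1}([x_{0}])$ is dense in $\overline{\pi}_{\tau}^{-1}([x_{0}])$. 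Together with the commutativity of \eqref{Equation: Partial compactification} and the fact that $k_{\tau}$ is an open embedding, this realizes $\overline{\pi}_{\tau}$ as a fibrewise compactification of $\pi_{\tau}$. (If one also wants $\overline{\pi}_{\tau}$ to be proper as a morphism, one can note in addition that $\overline{\mu}_{\tau}^{-1}(0)=X\times_{\g}(\overline{G\times\mathcal{S}_{\tau}})$ with respect to $\nu$ and the proper map $\overline{\rho}_{\tau}$ --- proper because for principal $\tau$ it factors through the closed embedding $\overline{G\times\mathcal{S}_{\tau}}\hookrightarrow\overline{G}\times\g$ and the proper projection $\overline{G}\times\g\to\g$ --- so $f$ is proper, and this descends to $\overline{\pi}_{\tau}$.)

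The hard part is the translation between the ``upstairs'' spaces $\overline{\mu}_{\tau}^{-1}(0)$, $X$ and their quotients $\overline{X}_{\tau}$, $X/G$: one must check carefully that the fibre of $\overline{\pi}_{\tau}$ over $[x_{0}]$ really is the naive quotient $\overline{\rho}_{\tau}^{-1}(y_{0})/G_{x_{0}}$ --- this is where Proposition~\ref{Proposition: Restriction of quotient} and the associated-bundle description of $f^{-1}(G\cdot x_{0})$ do the work --- and that $k_{\tau}$ restricts, fibre by fibre, to the inclusion $\rho_{\tau}^{-1}(y_{0})\hookrightarrow\overline{\rho}_{\tau}^{-1}(y_{0})$. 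The only facts imported from outside the preceding development are the projectivity of $\overline{\rho}_{\tau}^{-1}(y_{0})$ (Remark~\ref{Remark: Projective fibres}) and its irreducibility for regular $y_{0}$ (\cite{BalibanuUniversal}).
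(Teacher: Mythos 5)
Your proposal is correct, and its essential mechanism is the same as the paper's: principality of $\tau$ forces $y=\nu(x)_{\tau}$ in every point of $\overline{\mu}_{\tau}^{-1}(0)$ lying over $x$, so that $\overline{\pi}_{\tau}^{-1}([x])$ is the image of the closed (hence projective) subvariety $\{\gamma\in\overline{G}:(\nu(x),\nu(x)_{\tau})\in\gamma\}$ under a morphism to $\overline{X}_{\tau}$, and is therefore complete. The paper's proof consists of exactly this computation and stops there --- ``fibrewise compactification'' is used in the weak sense that the fibres of $\overline{\pi}_{\tau}$ are projective, with commutativity and the open embedding already supplied by Propositions \ref{Proposition: X quotient commute} and \ref{Proposition: Equivariant open}.

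What you add beyond the paper is (a) the associated-bundle identification $\overline{\pi}_{\tau}^{-1}([x_{0}])\cong\overline{\rho}_{\tau}^{-1}(y_{0})/G_{x_{0}}$ via Proposition \ref{Proposition: Restriction of quotient}, and (b) the density of $\pi_{\tau}^{-1}([x_{0}])$ in $\overline{\pi}_{\tau}^{-1}([x_{0}])$. Both are sound, but neither is needed for the statement as the paper interprets it. Two caveats on (b): as you yourself observe, density can only hold when $\nu(x_{0})$ is regular (otherwise $\pi_{\tau}^{-1}([x_{0}])$ is empty while $\overline{\pi}_{\tau}^{-1}([x_{0}])$ need not be), which is already a sign that the intended notion of fibrewise compactification cannot include density over every point of $X/G$; and your density argument imports the \emph{irreducibility} of $\overline{q}_{\tau}^{-1}(x)$ from \cite{BalibanuUniversal}, whereas the paper only ever invokes \emph{connectedness} of those fibres (in the proof of Theorem \ref{Theorem: irredgeneral}), so that input is genuinely external to the development here. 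Neither point invalidates your proof of the theorem; they just mean your argument proves a stronger (suitably restricted) statement at the cost of an extra citation.
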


\begin{proof}
Our objective is to prove that $\overline{\pi}_{\tau}$ has projective fibres. Let us begin by fixing a point $x\in X$. We then have
\begin{equation}\label{Equation: Statement}\overline{\pi}_{\tau}^{-1}([x])=\{[x:(\gamma,(\nu(x),y))]:\gamma\in\overline{G},\text{ }y\in\mathcal{S}_{\tau},\text{ and }(\nu(x),y)\in\gamma\}.\end{equation} On the other hand, it is known that $y_1,y_2\in\g$ belong to the same fibre of the adjoint quotient $\chi:\g\longrightarrow\mathrm{Spec}(\mathbb{C}[\g]^G)$ whenever $(y_1,y_2)\in\gamma$ for some $\gamma\in\overline{G}$ (see Subsection \ref{Subsection: KW}). The discussion and notation in Subsection \ref{Subsection: Lie theoretic conventions} associated with principal $\mathfrak{sl}_2$-triples then imply the following: if $y_1\in\g$ and $y_2\in\mathcal{S}_{\tau}$ are such that $(y_1,y_2)\in\gamma$ for some $\gamma\in\overline{G}$, then $y_2=(y_1)_{\tau}$. We may therefore present \eqref{Equation: Statement} as the statement
$$\overline{\pi}_{\tau}^{-1}([x])=\{[x:(\gamma,(\nu(x),\nu(x)_{\tau}))]:\gamma\in\overline{G}\text{ and }(\nu(x),\nu(x)_{\tau})\in\gamma\}.$$ In other words, $\overline{\pi}_{\tau}^{-1}([x])$ is the image of the closed subvariety
$$\{\gamma\in\overline{G}:(\nu(x),\nu(x)_{\tau})\in\gamma\}\subseteq\overline{G}$$ under the morphism
$$\overline{G}\longrightarrow \overline{X}_{\tau},\quad \gamma\longrightarrow [x:(\gamma,(\nu(x),\nu(x)_{\tau}))].$$ This subvariety is projective by virtue of being closed in $\overline{G}$, and we conclude that $\overline{\pi}_{\tau}^{-1}([x])$ is projective. This completes the proof.
\end{proof}	

Let us also examine the case $\tau=0$ in some detail. To this end, assume that $\overline{X}_0=\overline{X}$ exists and consider the geometric quotient map
$$\overline{\pi}_L:\overline{\mu}_L^{-1}(0)\longrightarrow\overline{X}.$$ 
The $G_R$-action on $\overline{\mu}_L^{-1}(0)$ then descends under $\overline{\pi}_L$ to a $G$-action $\overline{X}$. On the other hand, note that the restriction of $$-\overline{\mu}_R:X\times T^*\overline{G}(\log D)\longrightarrow\g,\quad (x,(\gamma,(y_1,y_2)))\longrightarrow y_2$$ to $\overline{\mu}_L^{-1}(0)$ is $G_R$-equivariant and $G_L$-invariant. This restriction therefore descends under $\overline{\pi}_L$ to the $G$-equivariant morphism
\begin{equation}\label{Equation: Formula1}\overline{\nu}:\overline{X}\longrightarrow\g,\quad [x:(\gamma,(\nu(x),y))]\longrightarrow y.\end{equation} Let us write $k:X\longrightarrow\overline{X}$, $\pi:X\longrightarrow X/G$, and $\overline{\pi}:\overline{X}\longrightarrow X/G$ for \eqref{Equation: Definition of ktau}, \eqref{Equation: First quotient}, and \eqref{Equation: Second quotient}, respectively, in the case $\tau=0$. 

\begin{prop}\label{Proposition: Equivariant}
If $\overline{X}$ exists, then $k:X\longrightarrow\overline{X}$ is a $G$-equivariant open embedding and 
\begin{equation}\label{Equation: Triangle}\begin{tikzcd}
X \arrow{rr}{k} \arrow[swap]{dr}{\nu}& & \overline{X} \arrow{dl}{\overline{\nu}}\\
& \mathfrak{g} & 
\end{tikzcd}\end{equation}
commutes. If $X/G$ also exists, then 
\begin{equation}\label{Equation: Same diag}\begin{tikzcd}
X \arrow{rr}{k} \arrow[swap]{dr}{\pi}& & \overline{X} \arrow{dl}{\overline{\pi}}\\
& X/G & 
\end{tikzcd}\end{equation}
commutes.
\end{prop}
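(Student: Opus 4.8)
The plan is to deduce all four assertions from results already established, specialized to $\tau = 0$, together with the $(G\times G)$-equivariance of the embedding $i$ of \eqref{Equation: Equivariant embedding}. Recall that $X_0 = X$, that $\overline{X}_0 = \overline{X}$, and that $k_0 = k$, $\pi_0 = \pi$, $\overline{\pi}_0 = \overline{\pi}$ by definition, while $\psi_0$ coincides with the isomorphism $\psi$ of \eqref{Equation: Reduction isomorphism} (Remark \ref{Rem: tauNull}). Under these identifications, the assertion that $k$ is an open embedding is precisely Proposition \ref{Proposition: Equivariant open} at $\tau = 0$, and the commutativity of \eqref{Equation: Same diag} is precisely Proposition \ref{Proposition: X quotient commute} at $\tau = 0$ (the inclusion $X_0 \hookrightarrow X$ being the identity, so that $\pi_0 = \pi$). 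It therefore remains only to show that $k$ is $G$-equivariant and that \eqref{Equation: Triangle} commutes.

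For equivariance, I would argue as follows. The map $i : X\times T^*G \longrightarrow X\times T^*\overline{G}(\log D)$ is $(G\times G)$-equivariant, hence in particular $G_R$-equivariant. Since the moment-map components $\mu_L$ and $\overline{\mu}_L$ are $G_R$-invariant, $i$ restricts to a $G_R$-equivariant open embedding $\mu_L^{-1}(0) \hookrightarrow \overline{\mu}_L^{-1}(0)$, which consequently descends to the morphism $j_0 : (X\times T^*G)\sslash G_L \longrightarrow \overline{X}$ and renders it equivariant for the residual $G$-actions that $G_R$ induces on source and target. Precomposing $j_0$ with $\psi : X \overset{\cong}{\longrightarrow} (X\times T^*G)\sslash G_L$, which by \eqref{Equation: Reduction isomorphism} is an isomorphism of \emph{Hamiltonian} $G$-varieties and therefore intertwines the original $G$-action on $X$ with the residual $G$-action on the reduction, yields that $k = j_0\circ\psi$ intertwines the original $G$-action on $X$ with the residual $G$-action on $\overline{X}$, as required. (Alternatively, one may check equivariance by a direct computation using \eqref{Equation: Formula2} and the identity $(g,g)\cdot\g_{\Delta} = \g_{\Delta}$.)

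For the commutativity of \eqref{Equation: Triangle}, the quickest route is to combine the explicit formulas already at hand: by \eqref{Equation: Formula2} at $\tau = 0$ one has $k(x) = [x:(\g_{\Delta},(\nu(x),\nu(x)))]$, and substituting this into \eqref{Equation: Formula1} gives $\overline{\nu}(k(x)) = \nu(x)$, so $\overline{\nu}\circ k = \nu$. (One could equally observe from \eqref{Equation: muXT*G} and \eqref{Equation: mubarXT*GlogD} that $\overline{\mu}_R\circ i = \mu_R$, whence $\overline{\nu}$ pulls back along $j_0$ to the descent of $-\mu_R\big\vert_{\mu_L^{-1}(0)}$, which is the moment map of the reduction and which $\psi$ identifies with $\nu$.) None of these steps requires an estimate or new geometric input; the only genuinely delicate point — and really the sole piece of content — is the bookkeeping of which $G$-action is meant at each stage, namely the original Hamiltonian action on $X$ versus the residual action on $\overline{X}$ coming from $G_R$, together with the verification that $\psi$ and $j_0$ intertwine the appropriate pairs.
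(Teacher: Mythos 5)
Your proposal is correct and follows essentially the same route as the paper: the open-embedding claim and the commutativity of \eqref{Equation: Same diag} are quoted from Propositions \ref{Proposition: Equivariant open} and \ref{Proposition: X quotient commute} at $\tau=0$, the triangle \eqref{Equation: Triangle} is read off from \eqref{Equation: Formula1} and \eqref{Equation: Formula2}, and the equivariance is the paper's ``direct calculation'' (your parenthetical alternative using $(g,g)\cdot\g_{\Delta}=\g_{\Delta}$ is exactly it, and your more structural route via the $G_R$-equivariance of $i$ and the Hamiltonian isomorphism $\psi$ is a valid repackaging of the same check).
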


\begin{proof}
The commutativity of \eqref{Equation: Triangle} follows immediately from \eqref{Equation: Formula1} and \eqref{Equation: Formula2}, while Proposition \ref{Proposition: X quotient commute} forces \eqref{Equation: Same diag} to commute. Proposition \ref{Proposition: Equivariant open} implies that $k$ is an open embedding. Our equivariance claim follows from \eqref{Equation: Formula2}, the above-given definition of the $G$-action on $\overline{X}$, and a direct calculation. This completes the proof. 
\end{proof}

\subsection{The Poisson geometries of $\overline{X}$ and $\overline{X}_{\tau}$}\label{Subsection: Poisson geometries tau}
Let $(X,P,\nu)$ be a Hamiltonian $G$-variety and suppose that $\tau$ is an $\mathfrak{sl}_2$-triple in $\g$. In what follows, we show that the Poisson slice $X_{\tau}$ endows $\overline{X}_{\tau}$ with certain Poisson-geometric qualities. The most basic such feature is as follows. 

\begin{prop}
	Let $\tau$ be an $\mathfrak{sl}_2$-triple in $\g$. If $\overline{X}_{\tau}$ exists, then $\mathbb{C}[\overline{X}_{\tau}]$ carries a natural Poisson bracket for which $k_{\tau}^*:\mathbb{C}[\overline{X}_{\tau}]\longrightarrow\mathbb{C}[X_{\tau}]$ is a Poisson algebra morphism.
\end{prop}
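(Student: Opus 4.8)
The plan is to view $\overline X_\tau=(X\times(\overline{G\times\mathcal S_\tau}))\sslash G$ as a Hamiltonian reduction and to apply the formalism of Subsection~\ref{Subsection: Hamiltonian reduction}. Put $Y:=X\times(\overline{G\times\mathcal S_\tau})$, a Hamiltonian $G$-variety with moment map $\overline\mu_\tau$; since $G$ is connected and reductive and the geometric quotient $\overline\mu_\tau^{-1}(0)\to\overline X_\tau$ exists by hypothesis, this is a Hamiltonian reduction, and the quotient map (being affine) gives $\mathbb{C}[\overline X_\tau]\cong\mathbb{C}[\overline\mu_\tau^{-1}(0)]^G$. To transport a Poisson bracket onto this ring I would take the Poisson subalgebra $\mathbb{C}[Y]^G\subseteq\mathbb{C}[Y]$ (a Poisson subalgebra because the bivector is $G$-invariant) together with the restriction map $r\colon\mathbb{C}[Y]^G\to\mathbb{C}[\overline\mu_\tau^{-1}(0)]^G$, and check that $J:=\ker r$ is a Poisson ideal. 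The latter is the standard argument: for $g\in\mathbb{C}[Y]^G$ one has $H_g(\overline\mu_\tau^{\,y})=\{\overline\mu_\tau^{\,y},g\}=-H_{\overline\mu_\tau^{\,y}}(g)=V_y(g)=0$ for all $y\in\g$, by the moment-map identity~\eqref{Equation: MomentCondition} and $G$-invariance of $g$, so $H_g$ is tangent to $\overline\mu_\tau^{-1}(0)$ and hence preserves its vanishing ideal $\mathcal I$; thus if $f\in J$, then $\{f,g\}=H_g(f)\in\mathcal I$, and $\{f,g\}$ is $G$-invariant, so $\{f,g\}\in J$. Together with the surjectivity of $r$ (discussed below), this makes $\mathbb{C}[\overline X_\tau]\cong\mathbb{C}[Y]^G/J$ a Poisson algebra in the natural way.

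It then remains to verify that $k_\tau^*$ respects this bracket. Recall from \eqref{Equation: i_tau} and Proposition~\ref{Proposition: nice} that $i_\tau\colon X\times(G\times\mathcal S_\tau)\hookrightarrow Y$ is a $G$-equivariant open Poisson embedding, and that by \eqref{Equation: Slice diag} it restricts to the open embedding $\mu_\tau^{-1}(0)\hookrightarrow\overline\mu_\tau^{-1}(0)$ of \eqref{Equation: Restricted}. Hence $i_\tau^*\colon\mathbb{C}[Y]\to\mathbb{C}[X\times(G\times\mathcal S_\tau)]$ is a morphism of Poisson algebras carrying $\mathbb{C}[Y]^G$ into $\mathbb{C}[X\times(G\times\mathcal S_\tau)]^G$ and the ideal of $\overline\mu_\tau^{-1}(0)$ into that of $\mu_\tau^{-1}(0)$; it therefore descends to a Poisson-algebra morphism $\mathbb{C}[\overline X_\tau]=\mathbb{C}[Y]^G/J\to\mathbb{C}[(X\times(G\times\mathcal S_\tau))\sslash G]$, which is the comorphism $j_\tau^*$ of the map $j_\tau$ in \eqref{Equation: Descent} (compare the commutative square~\eqref{Equation: Square}). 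Composing with $\psi_\tau^*$---the comorphism of the Poisson variety isomorphism $\psi_\tau$ of Proposition~\ref{Proposition: PoissonSlice}, identifying the reduction Poisson structure on $(X\times(G\times\mathcal S_\tau))\sslash G$ with the Poisson-slice structure on $X_\tau$---and using $k_\tau=j_\tau\circ\psi_\tau$ from \eqref{Equation: Definition of ktau}, one obtains $k_\tau^*=\psi_\tau^*\circ j_\tau^*$, a composite of Poisson-algebra morphisms. This is exactly the assertion.

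The computations invoked above ($\mathbb{C}[Y]^G$ a Poisson subalgebra, $J$ a Poisson ideal, $i_\tau^*$ respecting invariants and the defining ideals, $\psi_\tau^*$ a Poisson isomorphism) are routine reduction bookkeeping. The one point that genuinely deserves attention---and which I expect to be the main obstacle---is that $Y$ is not affine, so the surjectivity of $r\colon\mathbb{C}[Y]^G\to\mathbb{C}[\overline X_\tau]$, equivalently (since $G$ is reductive, hence invariant-taking is exact) of $\mathbb{C}[Y]\to\mathbb{C}[\overline\mu_\tau^{-1}(0)]$, is not the triviality it is over an affine base. I would handle this either by reduction to the general Hamiltonian-reduction formalism of Subsection~\ref{Subsection: Hamiltonian reduction}, where this surjectivity is part of the setup, or directly via a cohomology-vanishing argument exploiting the affine-bundle structure of $T^*\overline G(\log D)\to\overline G$ over the projective base $\overline G$ together with the affineness of the geometric quotient map $\overline\mu_\tau^{-1}(0)\to\overline X_\tau$.
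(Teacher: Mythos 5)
Your proposal is correct and follows essentially the same route as the paper: both obtain the bracket on $\mathbb{C}[\overline{X}_{\tau}]$ from the Hamiltonian-reduction formalism of Subsection~\ref{Subsection: Hamiltonian reduction} applied to $X\times(\overline{G\times\mathcal{S}_{\tau}})$, and both deduce that $k_{\tau}^*=\psi_{\tau}^*\circ j_{\tau}^*$ is Poisson from the fact that the $G$-equivariant open Poisson embedding $i_{\tau}$ induces compatible maps on invariants and on the zero-fibres of the moment maps. The surjectivity of the restriction $\mathbb{C}[X\times(\overline{G\times\mathcal{S}_{\tau}})]^G\to\mathbb{C}[\overline{\mu}_{\tau}^{-1}(0)]^G$ that you rightly flag is not reproved in the paper either; it is taken as part of the setup of Subsection~\ref{Subsection: Hamiltonian reduction}, exactly as in the first of the two remedies you suggest.
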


\begin{proof}
	The definition 
	$$\overline{X}_{\tau}:=(X\times(\overline{G\times\mathcal{S}_{\tau}}))\sslash G$$
	combines with the discussion in Subsection \ref{Subsection: Hamiltonian reduction} to yield a Poisson bracket on $\mathbb{C}[\overline{X}_{\tau}]$, as well as the following facts:
	\begin{itemize}
		\item[(i)] $\mathbb{C}[X\times (\overline{G\times\mathcal{S}_{\tau}})]^{G}$ is a Poisson subalgebra of $\mathbb{C}[X\times (\overline{G\times\mathcal{S}_{\tau}})]$;
		\item[(ii)] $\mathbb{C}[\overline{\mu}_{\tau}^{-1}(0)]^{G}$ has a unique Poisson bracket for which restriction $$\beta:\mathbb{C}[X\times (\overline{G\times\mathcal{S}_{\tau}})]^{G}\longrightarrow \mathbb{C}[\overline{\mu}_{\tau}^{-1}(0)]^{G}$$ is a Poisson algebra morphism;
		\item[(iii)] the geometric quotient map $\overline{\mu}_\tau^{-1}(0)\longrightarrow\overline{X}_{\tau}$ induces a Poisson algebra isomorphism
		$$\delta:\mathbb{C}[\overline{X}_{\tau}]\overset{\cong}\longrightarrow \mathbb{C}[\overline{\mu}_{\tau}^{-1}(0)]^{G}.$$
	\end{itemize}
	
	We have a row of Poisson algebra morphisms    
	$$\mathbb{C}[X\times (\overline{G\times\mathcal{S}_{\tau}})]\overset{\alpha}\longleftarrow \mathbb{C}[X\times (\overline{G\times\mathcal{S}_{\tau}})]^{G}\overset{\beta}\longrightarrow \mathbb{C}[\overline{\mu}_\tau^{-1}(0)]^{G}\overset{\delta}\longleftarrow\mathbb{C}[\overline{X}_{\tau}],$$ where $\alpha$ is the inclusion. An analogous procedure yields a second row
	$$\mathbb{C}[X\times (G\times\mathcal{S}_{\tau})]\overset{\alpha'}\longleftarrow \mathbb{C}[X\times(G\times\mathcal{S}_{\tau})]^{G}\overset{\beta'}\longrightarrow \mathbb{C}[\mu_{\tau}^{-1}(0)]^{G}\overset{\delta'}\longleftarrow\mathbb{C}[X_{\tau}]$$
	of Poisson algebra morphisms. Now recall the $G$-equivariant open Poisson embedding 
	$$i_{\tau}:X\times (G\times\mathcal{S}_{\tau})\longrightarrow X\times (\overline{G\times\mathcal{S}_{\tau}})$$ from \eqref{Equation: i_tau}, as well as the commutative diagram \eqref{Equation: Slice diag}. It follows that $i_{\tau}$ induces the first three vertical arrows in the commutative diagram
	$$\begin{tikzcd}
	\mathbb{C}[X\times (\overline{G\times\mathcal{S}_{\tau}})]  \arrow[d, "\alpha''"]
	& \arrow[l, swap, "\alpha"] \mathbb{C}[X\times (\overline{G\times\mathcal{S}_{\tau}})]^{G} \arrow[d, "\beta''"] \arrow[r, "\beta"] & \mathbb{C}[\overline{\mu}_{\tau}^{-1}(0)]^{G} \arrow[d, "\delta''"] & \mathbb{C}[\overline{X}_{\tau}] \arrow[d, "k_{\tau}^*"] \arrow[l, swap, "\delta"] \\
	\mathbb{C}[X\times (G\times\mathcal{S}_{\tau})] 
	& \arrow[l, swap, "\alpha'"] \mathbb{C}[X\times (G\times\mathcal{S}_{\tau})]^{G} \arrow[r, "\beta'"] & \mathbb{C}[\mu_{\tau}^{-1}(0)]^{G}  & \mathbb{C}[X_{\tau}] \arrow[l, swap, "\delta'"]
	\end{tikzcd}.$$
	Observe that $\alpha''$ is a Poisson algebra morphism, as follows from $i_{\tau}$ being a Poisson morphism. One deduces that $\beta''$ must also be a Poisson algebra morphism. This combines with the commutativity of the middle square and the fact that $\beta'$ and $\beta''$ are surjective Poisson algebra morphisms to imply that $\delta''$ is a Poisson algebra morphism. Since $\delta$ and $\delta'$ are Poisson algebra isomorphisms, this forces $k_{\tau}^*$ to be a Poisson algebra morphism. 
\end{proof}

Some more manifestly geometric features of $\overline{X}_{\tau}$ may be developed as follows. Write $(X\times (\overline{G\times\mathcal{S}_{\tau}}))^{\circ}$ for the $G$-invariant open subvariety of points in $X\times (\overline{G\times\mathcal{S}_{\tau}})$ whose $G$-stabilizers are trivial. 

The $G$-action on $(X\times \overline{G\times\mathcal S_\tau})^{\circ}$ is Hamiltonian with respect to the Poisson structure that $(X\times \overline{G\times \mathcal S_\tau})^{\circ}$ inherits from $(X\times (\overline{G\times\mathcal{S}_{\tau}}))$, and  
\begin{equation}\label{Equation: Complicated}\overline{\mu}_\tau^{\circ}:=\overline{\mu}_\tau\bigg\vert_{(X\times (\overline{G\times\mathcal{S}_{\tau}}))^{\circ}}:(X\times (\overline{G\times\mathcal{S}_{\tau}}))^{\circ}\longrightarrow\mathfrak{g}
\end{equation} 
is a moment map. 

Now assume that $\overline{X}_{\tau}$ exists and consider the geometric quotient map
$$\overline{\theta}_{\tau}:\overline{\mu}_{\tau}^{-1}(0)\longrightarrow\overline{X}_{\tau}.$$ The variety $(\overline{\mu}_{\tau}^{\circ})^{-1}(0)$ is $G$-invariant and open in $\mu_{\tau}^{-1}(0)$, and we set
$$X_{\tau}^{\circ}:=\overline{\theta}_{\tau}((\overline{\mu}_{\tau}^{\circ})^{-1}(0))\subseteq\overline{X}_{\tau}.$$
We also let
$$\overline{\theta}_{\tau}^{\circ}:(\overline{\mu}_{\tau}^{\circ})^{-1}(0)\longrightarrow\overline{X}_{\tau}^{\circ}$$ denote the restriction of $\overline{\theta}_\tau$ to $(\overline{\mu}_\tau^{\circ})^{-1}(0)$.

\begin{lem}\label{Lemma: Existence}
Let $\tau$ be an $\mathfrak{sl}_2$-triple in $\g$. If $\overline{X}_{\tau}$ exists, then $\overline{X}_{\tau}^{\circ}$ is an open subvariety of $\overline{X}_{\tau}$ and $\overline{\theta}^{\circ}_\tau:(\overline{\mu}_\tau^{\circ})^{-1}(0)\longrightarrow\overline{X}_{\tau}^{\circ}$ is the geometric quotient of $(\overline{\mu}_\tau^{\circ})^{-1}(0)$ by $G$.
\end{lem}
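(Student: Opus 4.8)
The plan is to deduce the lemma from Proposition~\ref{Proposition: Restriction of quotient}, applied to the geometric quotient $\overline{\theta}_\tau\colon\overline{\mu}_\tau^{-1}(0)\longrightarrow\overline{X}_\tau$ and to the subvariety $(\overline{\mu}_\tau^{\circ})^{-1}(0)\subseteq\overline{\mu}_\tau^{-1}(0)$; essentially all the work has already been packaged into that proposition, so the argument is short.

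First I would record that $(\overline{\mu}_\tau^{\circ})^{-1}(0)=\overline{\mu}_\tau^{-1}(0)\cap(X\times(\overline{G\times\mathcal{S}_\tau}))^{\circ}$ is a $G$-invariant open subvariety of $\overline{\mu}_\tau^{-1}(0)$. Indeed, $\overline{\mu}_\tau^{-1}(0)$ is $G$-invariant by equivariance of the moment map $\overline{\mu}_\tau$, while $(X\times(\overline{G\times\mathcal{S}_\tau}))^{\circ}$ is $G$-invariant and open by construction, having trivial $G$-stabilizer being a $G$-invariant condition. In particular $(\overline{\mu}_\tau^{\circ})^{-1}(0)$ is a $G$-invariant locally closed subvariety of $\overline{\mu}_\tau^{-1}(0)$.

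Since $G$ is semisimple, hence connected and reductive, Proposition~\ref{Proposition: Restriction of quotient} then applies directly: the image $\overline{\theta}_\tau\big((\overline{\mu}_\tau^{\circ})^{-1}(0)\big)=\overline{X}_\tau^{\circ}$ is locally closed in $\overline{X}_\tau$, and the restriction $\overline{\theta}_\tau^{\circ}\colon(\overline{\mu}_\tau^{\circ})^{-1}(0)\longrightarrow\overline{X}_\tau^{\circ}$ is a geometric quotient of $(\overline{\mu}_\tau^{\circ})^{-1}(0)$ by $G$. This is the second assertion of the lemma. To upgrade ``locally closed'' to ``open'' for $\overline{X}_\tau^{\circ}\subseteq\overline{X}_\tau$, I would invoke the fact that a geometric quotient map is open \cite[Lemma 25.3.2]{Tauvel}: applying this to the $G$-invariant open subvariety $(\overline{\mu}_\tau^{\circ})^{-1}(0)\subseteq\overline{\mu}_\tau^{-1}(0)$ shows its image $\overline{X}_\tau^{\circ}$ is open. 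Alternatively, since the fibres of $\overline{\theta}_\tau$ are $G$-orbits and $(\overline{\mu}_\tau^{\circ})^{-1}(0)$ is $G$-invariant, one has $\overline{\theta}_\tau^{-1}(\overline{X}_\tau^{\circ})=(\overline{\mu}_\tau^{\circ})^{-1}(0)$, so that $\overline{X}_\tau^{\circ}=\overline{X}_\tau\setminus\overline{\theta}_\tau\big(\overline{\mu}_\tau^{-1}(0)\setminus(\overline{\mu}_\tau^{\circ})^{-1}(0)\big)$ is open because the good quotient $\overline{\theta}_\tau$ sends the closed $G$-invariant set $\overline{\mu}_\tau^{-1}(0)\setminus(\overline{\mu}_\tau^{\circ})^{-1}(0)$ to a closed set.

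I do not expect a genuine obstacle here: the only point requiring any care is the (routine) verification that $(X\times(\overline{G\times\mathcal{S}_\tau}))^{\circ}$, and hence $(\overline{\mu}_\tau^{\circ})^{-1}(0)$, is genuinely open and $G$-invariant, together with the openness upgrade above; the substantive content is entirely inherited from Proposition~\ref{Proposition: Restriction of quotient}.
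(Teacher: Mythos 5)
Your proposal is correct and follows essentially the same route as the paper: openness of $\overline{X}_\tau^{\circ}$ via the openness of the geometric quotient map $\overline{\theta}_\tau$ \cite[Lemma 25.3.2]{Tauvel}, and the geometric quotient claim via Proposition \ref{Proposition: Restriction of quotient}. The extra verifications you include ($G$-invariance and openness of $(\overline{\mu}_\tau^{\circ})^{-1}(0)$, and the alternative complement argument) are sound but not needed beyond what the paper already records.
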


\begin{proof}
The geometric quotient map $\overline{\theta}_\tau:\overline{\mu}_\tau^{-1}(0)\longrightarrow\overline{X}_\tau$ is open \cite[Lemma 25.3.2]{Tauvel}. It follows that $\overline{X}_{\tau}^{\circ}=\overline{\theta}_\tau((\overline{\mu}_\tau^{\circ})^{-1}(0))$ is an open subvariety of $\overline{X}_{\tau}$. The rest of this lemma is an immediate consequence of Proposition \ref{Proposition: Restriction of quotient}.
\end{proof}

\begin{prop}\label{Proposition: Poisson structure}
Let $\tau$ be an $\mathfrak{sl}_2$-triple in $\g$. If $\overline{X}_{\tau}$ exists, then $\overline{X}_{\tau}^{\circ}$ is smooth and Poisson.
\end{prop}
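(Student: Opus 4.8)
The plan is to exhibit $\overline{X}_{\tau}^{\circ}$ as a Hamiltonian reduction of a smooth Poisson variety by a free action of the connected reductive group $G$, so that the construction recalled in Subsection~\ref{Subsection: Hamiltonian reduction} applies verbatim. First I would observe that $\overline{G\times\mathcal{S}_{\tau}}$, being a Poisson slice (hence a Poisson transversal in $T^*\overline{G}(\log D)$), is smooth, so that $X\times(\overline{G\times\mathcal{S}_{\tau}})$ is a smooth Poisson variety; its $G$-invariant open subvariety $(X\times(\overline{G\times\mathcal{S}_{\tau}}))^{\circ}$ is therefore also smooth, and it carries a Hamiltonian $G$-action with moment map $\overline{\mu}_\tau^{\circ}$ as in \eqref{Equation: Complicated}. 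By construction this $G$-action is free.

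Next I would invoke Lemma~\ref{Lemma: Existence}: since $\overline{X}_{\tau}$ exists, the map $\overline{\theta}_\tau^{\circ}\colon(\overline{\mu}_\tau^{\circ})^{-1}(0)\longrightarrow\overline{X}_{\tau}^{\circ}$ is the geometric quotient of $(\overline{\mu}_\tau^{\circ})^{-1}(0)$ by $G$. In other words, $\overline{X}_{\tau}^{\circ}$ is precisely the Hamiltonian reduction of $(X\times(\overline{G\times\mathcal{S}_{\tau}}))^{\circ}$ by $G$ at level $0$. The discussion in Subsection~\ref{Subsection: Hamiltonian reduction}, applied with the connected reductive group $G$ acting freely on $(\overline{\mu}_\tau^{\circ})^{-1}(0)$, then yields in one stroke that $(\overline{\mu}_\tau^{\circ})^{-1}(0)$ and $\overline{X}_{\tau}^{\circ}$ are smooth and that $\overline{X}_{\tau}^{\circ}$ inherits the reduced Poisson bivector.

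To keep the argument self-contained, the one point worth spelling out is the smoothness of $(\overline{\mu}_\tau^{\circ})^{-1}(0)$. I would use the standard fact that, for a Hamiltonian action, the annihilator in $\g$ of $\mathrm{image}(d(\overline{\mu}_\tau^{\circ})_p)\subseteq\g^*$ is contained in the Lie algebra of the $G$-stabilizer of $p$ (if $(d\overline{\mu}_\tau^{\circ,\,y})_p=0$ then $(V_y)_p=P_p((d\overline{\mu}_\tau^{\circ,\,y})_p)=0$, forcing $y$ into the stabilizer subalgebra); since the action is free this subalgebra is trivial, so $d(\overline{\mu}_\tau^{\circ})_p$ is surjective at every $p\in(\overline{\mu}_\tau^{\circ})^{-1}(0)$, making $0$ a regular value of $\overline{\mu}_\tau^{\circ}$. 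Hence $(\overline{\mu}_\tau^{\circ})^{-1}(0)$ is a smooth $G$-variety with a free action and a good quotient, so $\overline{\theta}_\tau^{\circ}$ is a principal $G$-bundle by Lemma~\ref{Lemma: FreeActionGbundle}, and smoothness of $\overline{X}_{\tau}^{\circ}$ follows by faithfully flat descent along $\overline{\theta}_\tau^{\circ}$. I do not anticipate a genuine obstacle: the proof is essentially the verification that the hypotheses of the Hamiltonian-reduction construction are met, the only mildly non-formal ingredient being the stabilizer/submersion argument just described.
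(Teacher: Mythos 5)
Your proof is correct and follows essentially the same route as the paper's: identify $\overline{X}_{\tau}^{\circ}$ via Lemma \ref{Lemma: Existence} as the Hamiltonian reduction of $(X\times(\overline{G\times\mathcal{S}_{\tau}}))^{\circ}$ at level zero by the free $G$-action, and then invoke the generalities on reductions by free actions from Subsection \ref{Subsection: Hamiltonian reduction}. The only difference is that you spell out the standard submersion argument for the smoothness of $(\overline{\mu}_\tau^{\circ})^{-1}(0)$, which the paper leaves implicit in those generalities.
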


\begin{proof}
Recall that $(X\times (\overline{G\times\mathcal{S}_{\tau}}))^{\circ}$ is a Hamiltonian $G$-variety with moment map \eqref{Equation: Complicated}. Lemma \ref{Lemma: Existence} then implies that $\overline{X}_{\tau}^{\circ}$ is the Hamiltonian reduction of $(X\times (\overline{G\times\mathcal{S}_{\tau}}))^{\circ}$ at level zero.
The proposition now follows from generalities about Hamiltonian reductions by free actions, the relevant parts of which are discussed in Subsection \ref{Subsection: Hamiltonian reduction}. 
\end{proof}

Now recall the open embedding $k_{\tau}:X_{\tau}\longrightarrow\overline{X}_{\tau}$ defined in \eqref{Equation: Definition of ktau}.

\begin{prop}\label{Proposition: Open embedding}
Let $\tau$ be an $\mathfrak{sl}_2$-triple in $\g$, and assume that $\overline{X}_{\tau}$ exists. The image of $k_{\tau}:X_{\tau}\longrightarrow\overline{X}_{\tau}$ then lies in $\overline{X}_{\tau}^{\circ}$, and $k_{\tau}$ defines an open embedding of Poisson varieties $X_{\tau}\longrightarrow\overline{X}_{\tau}^{\circ}$.
\end{prop}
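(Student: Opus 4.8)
The plan is to combine the description of $X_\tau$ as a Hamiltonian reduction from Proposition~\ref{Proposition: PoissonSlice} with the description of $\overline{X}_\tau^\circ$ from Lemma~\ref{Lemma: Existence} as the reduction of the free-stabilizer locus, and to check that the open embedding $k_\tau$ is compatible with these two presentations. First I would verify the set-theoretic claim that $k_\tau(X_\tau)\subseteq\overline{X}_\tau^\circ$. By the formula \eqref{Equation: Formula2}, $k_\tau(x)=[x:(\g_\Delta,(\nu(x),\nu(x)))]$, so it suffices to show that the point $(x,(\g_\Delta,(\nu(x),\nu(x))))\in\overline{\mu}_\tau^{-1}(0)$ has trivial $G$-stabilizer. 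But this point lies in the image of the open embedding $i_\tau$ of \eqref{Equation: i_tau}, namely it equals $i_\tau(x,(e,\nu(x)))$, and the $G$-action on $X\times(G\times\mathcal{S}_\tau)$ is free because $G$ acts freely on the $G$-factor of $G\times\mathcal{S}_\tau$ (this freeness was already used implicitly in Proposition~\ref{Proposition: PoissonSlice}, where the fibres of $\pi$ were single $G$-orbits with trivial stabilizer). Since $i_\tau$ is $G$-equivariant, the stabilizer of $i_\tau(x,(e,\nu(x)))$ is trivial, so the point lies in $(X\times(\overline{G\times\mathcal{S}_\tau}))^\circ$ and hence in $(\overline{\mu}_\tau^\circ)^{-1}(0)$. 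Therefore $k_\tau(x)=\overline{\theta}_\tau((\overline{\mu}_\tau^\circ)^{-1}(0))\subseteq\overline{X}_\tau^\circ$, as desired.

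Next I would observe that $k_\tau:X_\tau\longrightarrow\overline{X}_\tau^\circ$ is an open embedding of varieties. This is immediate from Proposition~\ref{Proposition: Equivariant open}, which already gives that $k_\tau:X_\tau\longrightarrow\overline{X}_\tau$ is an open embedding, together with the first part of this proposition that its image lands in the open subvariety $\overline{X}_\tau^\circ\subseteq\overline{X}_\tau$; an open embedding into $\overline{X}_\tau$ whose image sits inside the open subset $\overline{X}_\tau^\circ$ is an open embedding into $\overline{X}_\tau^\circ$.

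It remains to check that $k_\tau$ is a Poisson morphism onto its image, i.e. a Poisson isomorphism onto an open Poisson subvariety of $\overline{X}_\tau^\circ$. Here I would trace through the square \eqref{Equation: Square}, restricted to the free locus: the open embedding $i\big\vert_{\mu_\tau^{-1}(0)}$ lands in $(\overline{\mu}_\tau^\circ)^{-1}(0)$, and descending along the two geometric quotient maps $\mu_\tau^{-1}(0)\to(X\times(G\times\mathcal{S}_\tau))\sslash G$ and $\overline{\theta}_\tau^\circ:(\overline{\mu}_\tau^\circ)^{-1}(0)\to\overline{X}_\tau^\circ$ yields $j_\tau$ as an open embedding with image in $\overline{X}_\tau^\circ$. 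Since $i$ (hence $i\big\vert_{\mu_\tau^{-1}(0)}$) is a Poisson morphism between the Hamiltonian-reduced Poisson structures, and the Poisson structure on each geometric quotient is the one inherited from the reduction by the free action (as recalled in Subsection~\ref{Subsection: Hamiltonian reduction}, with smoothness guaranteed by Proposition~\ref{Proposition: Poisson structure}), the descended map $j_\tau$ is a Poisson morphism; explicitly, for $x\in\mu_\tau^{-1}(0)$ the identity $P_{\pi(x)}(\alpha)=d\pi_x(P_x(\tilde\alpha))$ on both sides is preserved because $i$ identifies the bivectors and commutes with the quotient maps. Composing with the Poisson isomorphism $\psi_\tau:X_\tau\to(X\times(G\times\mathcal{S}_\tau))\sslash G$ of \eqref{Equation: Ham iso} gives that $k_\tau=j_\tau\circ\psi_\tau$ is a Poisson morphism, and being an open embedding it is an isomorphism of Poisson varieties onto an open Poisson subvariety of $\overline{X}_\tau^\circ$. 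The main obstacle is the bookkeeping in this last step — making sure the Poisson structure on $\overline{X}_\tau^\circ$ obtained via Lemma~\ref{Lemma: Existence} (reduction of the free locus) genuinely agrees, on the image of $k_\tau$, with the one transported from $X_\tau$; this is handled by the fact that $i_\tau$ restricts to an open \emph{Poisson} embedding and that geometric quotients by free actions are compatible with restriction to invariant open subsets, so no new computation beyond Proposition~\ref{Proposition: PoissonSlice} is needed.
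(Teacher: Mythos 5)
Your proposal is correct and follows essentially the same route as the paper's proof: both arguments rest on the freeness of the $G$-action on $X\times(G\times\mathcal{S}_{\tau})$ to place the image of $i_{\tau}$ in the locus of trivial stabilizers, then use the commutative square relating $i_{\tau}\big\vert_{\mu_{\tau}^{-1}(0)}$ and $j_{\tau}$ together with the description of Poisson structures on Hamiltonian reductions to see that $j_{\tau}$ is Poisson, and finally invoke Proposition~\ref{Proposition: Equivariant open} and the isomorphism $\psi_{\tau}$ to conclude. Your more explicit pointwise verification that $k_{\tau}(x)=i_{\tau}(x,(e,\nu(x)))$ has trivial stabilizer is just an unwinding of the same observation the paper makes at the level of the whole image of $i_{\tau}$.
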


\begin{proof}
Recall that $k_{\tau}=j_{\tau}\circ\psi_{\tau}$, and that $\psi_{\tau}$ is a Poisson variety isomorphism. It therefore suffices to prove the following: 
\begin{itemize}
\item[(i)] the image of $j_\tau:(X\times (G\times\mathcal{S}_{\tau}))\sslash G\longrightarrow\overline{X}_{\tau}$ lies in $\overline{X}_{\tau}^{\circ}$; 
\item[(ii)] $j_\tau$ defines an open embedding of Poisson varieties $(X\times (G\times\mathcal{S}_{\tau}))\sslash G\longrightarrow\overline{X}_{\tau}^{\circ}$.
\end{itemize}
	
Since $G$ acts freely on $X\times (G\times\mathcal{S}_\tau)$, the image of \eqref{Equation: i_tau} lies in $(X\times (\overline{G\times\mathcal{S}_{\tau}}))^{\circ}$. We may therefore interpret \eqref{Equation: i_tau} as a $G$-equivariant open Poisson embedding 
$$i_{\tau}:X\times ( G\times\mathcal{S}_{\tau})\longrightarrow (X\times (\overline{G\times\mathcal{S}_{\tau}}))^{\circ}$$
and \eqref{Equation: Slice diag} as a commutative diagram
$$\begin{tikzcd}
X\times (G\times\mathcal{S}_{\tau}) \arrow{rr}{i_{\tau}} \arrow[swap]{dr}{\mu_{\tau}}& & (X\times (\overline{G\times\mathcal{S}_{\tau}}))^{\circ} \arrow{dl}{\overline{\mu}_{\tau}^{\circ}}\\
& \mathfrak{g} & 
\end{tikzcd}.$$
Such considerations allow one to regard \eqref{Equation: Restricted} and \eqref{Equation: Descent} as maps \begin{equation}\label{Equation: New Restricted}i_{\tau}\big\vert_{\mu_\tau^{-1}(0)}:\mu_\tau^{-1}(0)\hookrightarrow(\overline{\mu}_\tau^{\circ})^{-1}(0)\end{equation} and
\begin{equation}\label{Equation: New j_tau} j_{\tau}:(X\times (G\times\mathcal{S}_{\tau}))\sslash G\longrightarrow\overline{X}_{\tau}^{\circ},\end{equation} respectively. This verifies (i) and yields the commutative square
\begin{equation}\label{Equation: New Square}\begin{tikzcd}
\mu_\tau^{-1}(0) \arrow[r, "i_\tau\big\vert_{\mu_\tau^{-1}(0)}"] \arrow[d]
& (\overline{\mu}_\tau^{\circ})^{-1}(0) \arrow[d] \\
(X\times (G\times\mathcal{S}_{\tau}))\sslash G \arrow[r, "j_\tau"]
& \overline{X}_{\tau}^{\circ}
\end{tikzcd}.\end{equation}
By combining this square with the description of the Poisson structure on a Hamiltonian reduction, we deduce that \eqref{Equation: New j_tau} is a Poisson morphism. This morphism is also an open embedding, as follows easily from Proposition \ref{Proposition: Equivariant open}. Our proof is therefore complete.
\end{proof}

Let us write $\overline{X}^{\circ}$ for $\overline{X}_\tau^{\circ}$ if $\tau=0$. This variety turns out to enjoy some Poisson geometric features beyond those of a general $\overline{X}_{\tau}^{\circ}$. To develop these features, assume that $\overline{X}$ exists and let
$$\overline{\pi}_L:\overline{\mu}_L^{-1}(0)\longrightarrow\overline{X}$$
be the geometric quotient map. Write $(X\times T^*\overline{G}(\log D))^{\circ}$ for the ($G\times G$)-invariant open subvariety of points in $X\times T^*\overline{G}(\log D)$ whose $G_L$-stabilizers are trivial. The ($G\times G$)-action on $(X\times T^*\overline{G}(\log D))^{\circ}$ is Hamiltonian with respect to the Poisson structure that $(X\times T^*\overline{G}(\log D))^{\circ}$ inherits from $X\times T^*\overline{G}(\log D)$, and  \begin{equation}\label{Equation: Complicated2}(\overline{\mu}_L^{\circ},\overline{\mu}_R^{\circ}):=(\overline{\mu}_L\bigg\vert_{(X\times T^*\overline{G}(\log D))^{\circ}},\overline{\mu}_R\bigg\vert_{(X\times T^*\overline{G}(\log D))^{\circ}}):(X\times T^*\overline{G}(\log D))^{\circ}\longrightarrow\mathfrak{g}\oplus\mathfrak{g}\end{equation} 
is a moment map. 

Now consider the ($G\times G$)-invariant open subvariety of $(\overline{\mu}_L^{\circ})^{-1}(0)$ of $\overline{\mu}_L^{-1}(0)$, and observe that
$$\overline{X}^{\circ}:=\overline{\pi}_L((\overline{\mu}_L^{\circ})^{-1}(0)).$$ Let $$\overline{\pi}^{\circ}_L:(\overline{\mu}_L^{\circ})^{-1}(0)\longrightarrow\overline{X}^{\circ}$$ denote the restriction of $\overline{\pi}_L$ to $(\overline{\mu}_L^{\circ})^{-1}(0)$. At the same time, recall the definition of the $G$-action on $\overline{X}$.

\begin{lem}
Assume that $\overline{X}$ exists. The subset $\overline{X}^{\circ}$ is then a $G$-invariant open subvariety of $\overline{X}$, and $\overline{\pi}^{\circ}_L:(\overline{\mu}_L^{\circ})^{-1}(0)\longrightarrow\overline{X}^{\circ}$ is the geometric quotient of $(\overline{\mu}_L^{\circ})^{-1}(0)$ by $G_L$. 
\end{lem}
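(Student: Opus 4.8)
The plan is to mimic the proof of Lemma~\ref{Lemma: Existence}, adding the one genuinely new ingredient, namely the verification that $\overline{X}^{\circ}$ is stable under the residual $G$-action. First I would record that the geometric quotient map $\overline{\pi}_L\colon\overline{\mu}_L^{-1}(0)\longrightarrow\overline{X}$ is open by \cite[Lemma 25.3.2]{Tauvel}. Since $(X\times T^*\overline{G}(\log D))^{\circ}$ is open in $X\times T^*\overline{G}(\log D)$, the subset $(\overline{\mu}_L^{\circ})^{-1}(0)=\overline{\mu}_L^{-1}(0)\cap (X\times T^*\overline{G}(\log D))^{\circ}$ is open in $\overline{\mu}_L^{-1}(0)$, so its image $\overline{X}^{\circ}=\overline{\pi}_L((\overline{\mu}_L^{\circ})^{-1}(0))$ is open in $\overline{X}$.

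Next I would check $G$-invariance. The open subvariety $(X\times T^*\overline{G}(\log D))^{\circ}$ is ($G\times G$)-invariant by construction, hence in particular $G_R$-invariant; likewise $\overline{\mu}_L^{-1}(0)$ is $G_R$-invariant since $\overline{\mu}_L$ is $G_R$-invariant. Therefore $(\overline{\mu}_L^{\circ})^{-1}(0)$ is $G_R$-invariant. Because $\overline{\pi}_L$ is $G_R$-equivariant and the $G$-action on $\overline{X}$ is by definition the one obtained by descending the $G_R$-action along $\overline{\pi}_L$, the image $\overline{X}^{\circ}$ is $G$-invariant.

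Finally, for the geometric quotient claim I would invoke Proposition~\ref{Proposition: Restriction of quotient} with $K=G_L$ (which is connected and reductive), with the $K$-variety $\overline{\mu}_L^{-1}(0)$ and its geometric quotient $\overline{\pi}_L$, and with $Z=(\overline{\mu}_L^{\circ})^{-1}(0)$, the latter being a $G_L$-invariant open, hence locally closed, subvariety. That proposition yields that $\overline{\pi}_L(Z)$ is locally closed in $\overline{X}$ and that $\overline{\pi}_L\big\vert_Z\colon Z\longrightarrow\overline{\pi}_L(Z)$ is a geometric quotient of $Z$ by $G_L$; since $\overline{\pi}_L\big\vert_Z=\overline{\pi}^{\circ}_L$ and $\overline{\pi}_L(Z)=\overline{X}^{\circ}$, this is exactly the assertion.

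The argument is essentially bookkeeping, and no real obstacle is anticipated; the only point deserving mild care is keeping the two commuting factors straight—using $G_L$-invariance of $Z$ to restrict the geometric quotient via Proposition~\ref{Proposition: Restriction of quotient}, and using $G_R$-invariance of $Z$ together with $G_R$-equivariance of $\overline{\pi}_L$ to see that the resulting open set carries the residual $G$-action.
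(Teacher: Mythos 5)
Your argument is correct and coincides with the paper's: the authors establish the $G$-invariance of $\overline{X}^{\circ}$ exactly as you do, via $G_R$-equivariance of $\overline{\pi}_L$ and $G_R$-invariance of $(\overline{\mu}_L^{\circ})^{-1}(0)$, and then dispose of the remaining claims by citing Lemma \ref{Lemma: Existence} (in the case $\tau=0$), whose proof consists precisely of your other two steps — openness of the geometric quotient map and an application of Proposition \ref{Proposition: Restriction of quotient}. You have merely inlined that lemma rather than citing it; there is no substantive difference.
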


\begin{proof}
Observe that $\overline{\pi}_L$ is equivariant with respect to the action of $G_R$ on $\overline{\mu}_L^{-1}(0)$ and the above-discussed $G$-action on $\overline{X}$. Since $(\overline{\mu}_L^{\circ})^{-1}(0)$ is $G_R$-invariant in $\overline{\mu}_L^{-1}(0)$, this implies that $\overline{X}^{\circ}=\overline{\pi}_L((\overline{\mu}_L^{\circ})^{-1}(0))$ is $G$-invariant in $\overline{X}$. The rest of this lemma is an immediate consequence of Lemma \ref{Lemma: Existence}.
\end{proof}

The $G$-action that $\overline{X}^{\circ}$ inherits from $\overline{X}$ is compatible with the Poisson variety structure referenced in Proposition \ref{Proposition: Poisson structure}. To formulate this more precisely, recall the map $\nu:\overline{X}\longrightarrow\g$ in \eqref{Equation: Formula1} and set
$$\overline{\nu}^{\circ}:=\overline{\nu}\big\vert_{\overline{X}^{\circ}}:\overline{X}^{\circ}\longrightarrow\g.$$ 

\begin{prop}
If $\overline{X}$ exists, then the action of $G$ on $\overline{X}^{\circ}$ is Hamiltonian with moment map $\overline{\nu}^{\circ}:\overline{X}^{\circ}\longrightarrow\g$.
\end{prop}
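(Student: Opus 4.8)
The plan is to recognize this statement as an instance of the standard fact that Hamiltonian reduction of a Hamiltonian $(G_L\times G_R)$-variety by the factor $G_L$ produces a Hamiltonian $G_R$-variety, with residual moment map given by descending the $G_R$-component of the original moment map. Concretely, I would write $M^{\circ}:=(X\times T^*\overline{G}(\log D))^{\circ}$, so that, by \eqref{Equation: Complicated2} this is a Hamiltonian $(G\times G)$-variety with $G_L$ acting freely, and, by the preceding lemma together with Proposition \ref{Proposition: Poisson structure} applied with $\tau=0$, the variety $\overline{X}^{\circ}$ is the Hamiltonian reduction of $M^{\circ}$ at level zero by $G_L$, with geometric quotient map $\overline{\pi}_L^{\circ}:(\overline{\mu}_L^{\circ})^{-1}(0)\longrightarrow\overline{X}^{\circ}$ of a smooth variety by a free action.

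The first step is to record the bookkeeping that makes everything work: since the $G_L$- and $G_R$-actions on $M^{\circ}$ commute and $\overline{\mu}_L^{\circ}$, $\overline{\mu}_R^{\circ}$ are the two components of the $(G\times G)$-moment map, the map $\overline{\mu}_L^{\circ}$ is $G_R$-invariant and $\overline{\mu}_R^{\circ}$ is $G_L$-invariant. Hence $(\overline{\mu}_L^{\circ})^{-1}(0)$ is $G_R$-invariant, and $\overline{\pi}_L^{\circ}$ is equivariant for the $G_R$-action on its source and the $G$-action on $\overline{X}^{\circ}$ that $\overline{X}^{\circ}$ inherits from $\overline{X}$ (which, by the definition of the $G$-action on $\overline{X}$ given earlier in this subsection, is precisely the descent of the $G_R$-action on $\overline{\mu}_L^{-1}(0)$). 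From $(G\times G)$-invariance of $P_{M^{\circ}}$ and these facts, the reduction recipe of Subsection \ref{Subsection: Hamiltonian reduction} that produces $P_{\overline{X}^{\circ}}$ from $P_{M^{\circ}}$ is $G_R$-equivariant, giving $G$-invariance of $P_{\overline{X}^{\circ}}$; and $\overline{\nu}^{\circ}$, being the descent along $\overline{\pi}_L^{\circ}$ of the $G_R$-equivariant, $G_L$-invariant map $-\overline{\mu}_R^{\circ}\big\vert_{(\overline{\mu}_L^{\circ})^{-1}(0)}$, is $G$-equivariant.

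The only substantive point is the moment condition \eqref{Equation: MomentCondition}. For $y\in\g$, I would observe that the function $-\overline{\mu}_R^{\circ\,y}\in\mathbb{C}[M^{\circ}]$ (with $\overline{\mu}_R^{\circ\,y}=\langle\overline{\mu}_R^{\circ},y\rangle$) is, under the paper's identification $(\g\oplus\g)^*\cong\g\oplus\g$, exactly the component of the $(G\times G)$-moment map pairing with $(0,y)\in\g\oplus\g$, so its Hamiltonian vector field on $M^{\circ}$ equals $-V_{(0,y)}$, the negative of the fundamental vector field of $y\in\mathrm{Lie}(G_R)$. Being $G_L$-invariant, this function restricts on $(\overline{\mu}_L^{\circ})^{-1}(0)$ to $(\overline{\pi}_L^{\circ})^{*}\big((\overline{\nu}^{\circ})^{y}\big)$ by the definition of $\overline{\nu}$; the generalities on reduction by free actions recalled in Subsection \ref{Subsection: Hamiltonian reduction} then show its Hamiltonian vector field is $\overline{\pi}_L^{\circ}$-related to $H_{(\overline{\nu}^{\circ})^{y}}$ on $\overline{X}^{\circ}$, while $-V_{(0,y)}$ restricted to $(\overline{\mu}_L^{\circ})^{-1}(0)$ is $\overline{\pi}_L^{\circ}$-related to $-V_y$ on $\overline{X}^{\circ}$ by the equivariance noted above. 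Since $\overline{\pi}_L^{\circ}$ is a surjective submersion, a vector field on the source is $\overline{\pi}_L^{\circ}$-related to at most one vector field downstairs, forcing $H_{(\overline{\nu}^{\circ})^{y}}=-V_y$; as $y$ was arbitrary, this finishes the proof.

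I expect no serious obstacle: the content is standard "reduction in stages / commuting Hamiltonian actions" and the needed generalities are already set up in Subsection \ref{Subsection: Hamiltonian reduction}. The one place demanding care is sign bookkeeping — the identification $(\g\oplus\g)^*\cong\g\oplus\g$ used throughout carries a minus sign in the second factor, which is precisely why $\overline{\nu}$ descends $-\overline{\mu}_R$ rather than $+\overline{\mu}_R$ and why the moment condition comes out with the correct sign.
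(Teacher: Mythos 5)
Your proposal is correct and follows essentially the same route as the paper: the paper also deduces the statement from the general fact that reducing the Hamiltonian $(G\times G)$-variety $(X\times T^*\overline{G}(\log D))^{\circ}$ by $G_L$ at level zero yields a Hamiltonian $G$-variety whose moment map is the descent of $-\overline{\mu}_R^{\circ}\big\vert_{(\overline{\mu}_L^{\circ})^{-1}(0)}$, and then identifies that descent with $\overline{\nu}^{\circ}$ and the residual action with the one inherited from $\overline{X}$. You simply unwind the equivariance bookkeeping, the sign in the identification $(\g\oplus\g)^*\cong\g\oplus\g$, and the moment condition via $\overline{\pi}_L^{\circ}$-relatedness, all of which the paper leaves implicit.
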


\begin{proof}
Recall that $(X\times T^*\overline{G}(\log D))^{\circ}$ is a Hamiltonian ($G\times G$)-variety with moment map \eqref{Equation: Complicated2}. One deduces that $\overline{X}^{\circ}=(\overline{\mu}_L^{\circ})^{-1}(0)/G_L$ is a Hamiltonian $G$-variety, and that the corresponding moment map is obtained by letting $$-\overline{\mu}_R^{\circ}\bigg\vert_{(\overline{\mu}_L^{\circ})^{-1}(0)}: (\overline{\mu}_L^{\circ})^{-1}(0)\longrightarrow\g$$
descend to $\overline{X}^{\circ}$. It remains only to observe that this descended moment map and the $G$-action on $\overline{X}^{\circ}$ are restrictions of $\overline{\nu}:\overline{X}\longrightarrow\g$ and the $G$-action on $\overline{X}$, respectively.
\end{proof}

\begin{prop}\label{Proposition: Easy}
Assume that $\overline{X}$ exists. The image of $k:X\longrightarrow\overline{X}$ then lies in $\overline{X}^{\circ}$, and $k$ defines an open embedding of Hamiltonian $G$-varieties $X\longrightarrow\overline{X}^{\circ}$.
\end{prop}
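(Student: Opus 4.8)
The plan is to obtain Proposition~\ref{Proposition: Easy} by specialising results already established for general $\tau$ and combining them with the equivariance statement of Proposition~\ref{Proposition: Equivariant}. Concretely, the claim has three ingredients: that $k(X)\subseteq\overline{X}^{\circ}$; that $k$ is an open embedding of Poisson varieties onto its image in $\overline{X}^{\circ}$; and that $k$ respects the $G$-action and the moment map, so that it is a morphism of Hamiltonian $G$-varieties.

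First I would apply Proposition~\ref{Proposition: Open embedding} with $\tau=0$. Since $X_{0}=X$ and, by definition, $\overline{X}^{\circ}=\overline{X}_{0}^{\circ}$, that proposition immediately gives $k(X)\subseteq\overline{X}^{\circ}$ together with the fact that $k\colon X\longrightarrow\overline{X}^{\circ}$ is an open embedding of Poisson varieties, the Poisson structure on $\overline{X}^{\circ}$ being the one of Proposition~\ref{Proposition: Poisson structure}. This already settles the underlying variety-theoretic and Poisson-theoretic content of the statement.

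Next I would invoke Proposition~\ref{Proposition: Equivariant}, which states that $k\colon X\longrightarrow\overline{X}$ is $G$-equivariant and that the triangle \eqref{Equation: Triangle} commutes, i.e.\ $\overline{\nu}\circ k=\nu$. Because the $G$-action on $\overline{X}^{\circ}$ is by construction the restriction of the $G$-action on $\overline{X}$, the map $k\colon X\longrightarrow\overline{X}^{\circ}$ is $G$-equivariant; and since $\overline{\nu}^{\circ}=\overline{\nu}\big\vert_{\overline{X}^{\circ}}$ and $k(X)\subseteq\overline{X}^{\circ}$, restricting the commuting triangle yields $\overline{\nu}^{\circ}\circ k=\nu$. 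Thus $k$ is a $G$-equivariant Poisson morphism intertwining the two moment maps, hence a morphism of Hamiltonian $G$-varieties; being an open embedding of Poisson varieties, it is an open embedding of Hamiltonian $G$-varieties.

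The only genuine point to check — and the sole mild obstacle — is that the Hamiltonian $G$-variety structure placed on $\overline{X}^{\circ}$ in the proposition immediately preceding Proposition~\ref{Proposition: Easy} (the reduction of $(X\times T^*\overline{G}(\log D))^{\circ}$ by $G_{L}$, with moment map the descent of $-\overline{\mu}_{R}^{\circ}$) coincides with the structures used in Propositions~\ref{Proposition: Open embedding} and~\ref{Proposition: Equivariant}; but this is precisely the content of the last sentence of that proposition together with the formula \eqref{Equation: Formula1}, so no new argument is required. I therefore expect the write-up to be little more than a short chain of citations.
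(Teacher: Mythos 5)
Your proposal is correct and follows essentially the same route as the paper, whose proof simply cites Propositions \ref{Proposition: Equivariant} and \ref{Proposition: Open embedding}; you have just spelled out the specialisation to $\tau=0$ and the compatibility of the Hamiltonian structures in more detail. No gaps.
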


\begin{proof}
This is a direct consequence of Propositions \ref{Proposition: Equivariant} and Proposition \ref{Proposition: Open embedding}.
\end{proof}

\subsection{The log symplectic geometries of $\overline{X}$ and $\overline{X}_{\tau}$}
We now examine the Poisson geometries of $\overline{X}$ and $\overline{X}_{\tau}$ in the special case of a symplectic Hamiltonian $G$-variety $(X,P,\nu)$. These Poisson geometries essentially become log symplectic geometries, as is consistent with the following result. Recall the map $i_{\tau}:X\times (G\times\mathcal{S}_{\tau})\longrightarrow X\times (\overline{G\times\mathcal{S}_{\tau}})$ defined in \eqref{Equation: i_tau}.

\begin{lem}
Let $(X,P,\nu)$ be an irreducible symplectic Hamiltonian $G$-variety. If $\tau$ is an $\mathfrak{sl}_2$-triple in $\g$, then the following statements then hold:
\begin{itemize}
\item[(i)] $X\times (\overline{G\times\mathcal{S}_{\tau}})$ is log symplectic; 
\item[(ii)] $i_{\tau}$ is a $G$-equivariant symplectomorphism onto the unique open dense symplectic leaf in $X\times (\overline{G\times\mathcal{S}_{\tau}})$.	
\end{itemize}
\end{lem}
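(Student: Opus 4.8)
The plan is to deduce both assertions by combining results already in hand with one elementary general principle. Note first that $\overline{G\times\mathcal{S}_{\tau}}$ is itself a Poisson slice of the log symplectic variety $T^*\overline{G}(\log D)$ (for the $G_R$-action), so by Theorem~\ref{Theorem: irredgeneral} and Corollary~\ref{Corollary: Log symplectic}(i) it is irreducible and log symplectic; write $Z$ for its divisor, $Y_{0}:=\overline{G\times\mathcal{S}_{\tau}}\setminus Z$ for its open dense symplectic leaf, and $P'$ for its Poisson bivector. The general principle I would isolate is: \emph{if $(M,P_M)$ is a symplectic variety and $(N,P_N)$ is a log symplectic variety with divisor $D_N$, then the product Poisson variety $(M\times N, P_M\oplus(-P_N))$ is log symplectic with open dense symplectic leaf $M\times(N\setminus D_N)$ and divisor $M\times D_N$.} Granting this, part (i) is immediate upon taking $M=X$ and $N=\overline{G\times\mathcal{S}_{\tau}}$.

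To prove this principle, and hence (i), I would argue as follows. The product $M\times N$ is smooth and irreducible. Since $M$ consists of a single symplectic leaf, the symplectic leaves of $M\times N$ are precisely the subvarieties $M\times L$ with $L$ a symplectic leaf of $N$; in particular $M\times(N\setminus D_N)$ is the unique open dense symplectic leaf, and it carries the product of the two symplectic forms. Writing $2m=\dim M$ and $2n=\dim N=\dim(N\setminus D_N)$, the top exterior power $(P_M\oplus(-P_N))^{m+n}$ agrees up to a nonzero scalar with $P_M^{m}\wedge P_N^{n}$ under the natural decomposition of $T(M\times N)$ into the pullbacks of $TM$ and $TN$; since $P_M^{m}$ is nowhere zero, the vanishing locus of this top power is $M\times D_N$. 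Finally, writing $D_N=\bigcup_i (D_N)_i$ with the $(D_N)_i$ the (irreducible, smooth) components of the normal crossing divisor $D_N$, the set $M\times D_N=\bigcup_i\bigl(M\times (D_N)_i\bigr)$ is again a reduced normal crossing divisor, as one checks \'etale-locally by multiplying the local model $\{w_1\cdots w_r=0\}$ for $D_N\subseteq N$ by the smooth factor $M$. This verifies the two defining conditions of a log symplectic variety, giving (i) with divisor $X\times Z$.

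For (ii), I would observe that the map $i_{\tau}$ of \eqref{Equation: i_tau} is exactly $\mathrm{id}_X\times\bigl(\tilde{\varphi}\big\vert_{G\times\mathcal{S}_{\tau}}\bigr)$, where $\tilde{\varphi}\big\vert_{G\times\mathcal{S}_{\tau}}:G\times\mathcal{S}_{\tau}\longrightarrow\overline{G\times\mathcal{S}_{\tau}}$ is, by Proposition~\ref{Proposition: nice}(i), a $G$-equivariant symplectomorphism onto $Y_{0}$. As the $G$-actions on $X\times(G\times\mathcal{S}_{\tau})$ and $X\times(\overline{G\times\mathcal{S}_{\tau}})$ are diagonal and unaffected by the $\mathrm{id}_X$-factor, $i_{\tau}$ is a $G$-equivariant isomorphism onto $X\times Y_{0}$, which by part (i) is the unique open dense symplectic leaf of $X\times(\overline{G\times\mathcal{S}_{\tau}})$. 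That $i_{\tau}$ is a symplectomorphism for the leaf symplectic form follows because $X\times(G\times\mathcal{S}_{\tau})$ is symplectic — a product of the symplectic varieties $X$ and $G\times\mathcal{S}_{\tau}$, the latter by Corollary~\ref{Corollary: Symplectic subvariety} — while $i_{\tau}$ is an open Poisson embedding by construction (see the discussion following \eqref{Equation: i_tau}), so it carries the product symplectic form on the source to the product symplectic form identified on the leaf $X\times Y_{0}$ in part (i).

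I do not anticipate a genuine obstacle; the argument is essentially bookkeeping with product Poisson structures, all of whose ingredients are already available. The one point deserving care is the general principle underlying (i): the identification of the symplectic leaves — equivalently of the degeneracy locus of the Poisson bivector — of a product Poisson variety one of whose factors is symplectic, together with the verification that $M\times D_N$ remains a \emph{reduced normal crossing} divisor after taking the product with the smooth factor $M$. Once this is pinned down, (i) and (ii) are direct consequences of Theorem~\ref{Theorem: irredgeneral}, Corollary~\ref{Corollary: Log symplectic}, Proposition~\ref{Proposition: nice}, and Corollary~\ref{Corollary: Symplectic subvariety}.
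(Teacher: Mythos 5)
Your proposal is correct and follows essentially the same route as the paper's proof: both reduce everything to Proposition~\ref{Proposition: nice} by writing $i_{\tau}=\mathrm{id}_X\times\bigl(\tilde{\varphi}\big\vert_{G\times\mathcal{S}_{\tau}}\bigr)$, identify the degeneracy locus of $X\times(\overline{G\times\mathcal{S}_{\tau}})$ as $X\times\bigl(\overline{G\times\mathcal{S}_{\tau}}\setminus\tilde{\varphi}(G\times\mathcal{S}_{\tau})\bigr)$, and check that this is a reduced normal crossing divisor whose complement is the image of $i_{\tau}$. The only difference is presentational: you isolate and verify in detail the general principle that the product of a symplectic variety with a log symplectic variety is log symplectic (via the top exterior power computation), whereas the paper asserts the corresponding steps more tersely.
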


\begin{proof}
Proposition \ref{Proposition: nice} tells us that  $$\tilde{\varphi}\big\vert_{G\times\mathcal{S}_{\tau}}:G\times\mathcal{S}_{\tau}\longrightarrow\overline{G\times\mathcal{S}_{\tau}}$$ is a $G$-equivariant symplectomorphism onto the open dense symplectic leaf in the log symplectic variety $\overline{G\times\mathcal{S}_{\tau}}$. We also recall that $i_{\tau}$ is the product of $\tilde{\varphi}\big\vert_{G\times\mathcal{S}_{\tau}}$ with the identity $X\longrightarrow X$. These last two sentences imply that $i_{\tau}$ is a $G$-equivariant symplectomorphism onto the complement of the degeneracy locus in $X\times(\overline{G\times\mathcal{S}_{\tau}})$. Since $X\times (G\times\mathcal{S}_{\tau})$ and $X\times (\overline{G\times\mathcal{S}_{\tau}})$ are irreducible, this implies that the image of $i_{\tau}$ is the unique open dense symplectic leaf in $X\times(\overline{G\times\mathcal{S}_{\tau}})$.

Now consider the closed subvariety
\begin{equation}\label{Equation: Complement}D_{\tau}:=(X\times(\overline{G\times\mathcal{S}_{\tau}}))\setminus i_{\tau}(X\times (G\times\mathcal{S}_{\tau}))\end{equation} of $X\times(\overline{G\times\mathcal{S}_{\tau}})$ It remains only to prove the following things:
\begin{itemize}
\item[(a)] $D_{\tau}$ is a normal crossing divisor;
\item[(b)] the top exterior power of the Poisson bivector on $X\times(\overline{G\times\mathcal{S}_{\tau}})$ has a reduced vanishing locus;
\item[(c)] the vanishing locus in (b) coincides with $D_{\tau}$.
\end{itemize}
 To this end, we note that
$$D_{\tau}=X\times(\overline{G\times\mathcal{S}_{\tau}}\setminus\tilde{\varphi}(G\times\mathcal{S}_{\tau})).$$ We also observe that $\overline{G\times\mathcal{S}_{\tau}}\setminus\tilde{\varphi}(G\times\mathcal{S}_{\tau})$ is a normal crossing divisor in $\overline{G\times\mathcal{S}_{\tau}}$, as
$\tilde{\varphi}(G\times\mathcal{S}_{\tau})$ is the unique open dense symplectic leaf in the log symplectic variety $\overline{G\times\mathcal{S}_{\tau}}$ (see Proposition \ref{Proposition: nice}). The previous two sentences then force $D_{\tau}$ to be a normal crossing divisor in $\overline{G\times\mathcal{S}_{\tau}}$, i.e. (a) holds. The assertion (b) follows immediately from $X$ being symplectic and $\overline{G\times\mathcal{S}_{\tau}}$ being log symplectic. The assertion (c) follows from our description of the degeneracy locus in $X\times(\overline{G\times\mathcal{S}_{\tau}})$, as provided in the first paragraph of the proof. Our proof is therefore complete.
\end{proof}

Now recall the open embedding $k_{\tau}:X_{\tau}\longrightarrow\overline{X}_{\tau}$ in \eqref{Equation: Definition of ktau}, as well as the fact that $k_{\tau}(X_{\tau})\subseteq\overline{X}_{\tau}^{\circ}$ (see Proposition \ref{Proposition: Open embedding}). If $X_{\tau}$ is irreducible, then $k_{\tau}(X_{\tau})$ lies in a unique irreducible component $(\overline{X}_{\tau}^{\circ})_{\text{irr}}$ of the Poisson variety $\overline{X}_{\tau}^{\circ}$. The log symplectic nature of $\overline{X}_{\tau}$ is then captured by the following result, which relies heavily on the notation of Subsection \ref{Subsection: Poisson geometries tau}.

\begin{thm}\label{Theorem: nice}
Let $(X,P,\nu)$ be an irreducible symplectic Hamiltonian $G$-variety. Suppose that $\tau$ is an $\mathfrak{sl}_2$-triple in $\g$, and that $X_{\tau}$ is irreducible. If $\overline{X}_{\tau}$ exists, then the following statements hold.
\begin{itemize}
\item[(i)] The Poisson variety $(\overline{X}_{\tau}^{\circ})_{\emph{irr}}$ is log symplectic.
\item[(ii)] The morphism $k_{\tau}:X_{\tau}\longrightarrow\overline{X}_{\tau}$ is a symplectomorphism onto the unique open dense symplectic leaf in $(\overline{X}_{\tau}^{\circ})_{\emph{irr}}$. 
\end{itemize} 
\end{thm}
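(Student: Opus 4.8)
The plan is to transport the log symplectic structure of $W:=X\times(\overline{G\times\mathcal{S}_{\tau}})$, supplied by the preceding lemma, through the Hamiltonian reduction defining $\overline{X}_{\tau}^{\circ}$. Write $W_{0}:=i_{\tau}(X\times(G\times\mathcal{S}_{\tau}))$ for the open dense symplectic leaf of $W$ and $D_{\tau}=W\setminus W_{0}$ for the associated normal crossing divisor; since $G$ acts freely on $X\times(G\times\mathcal{S}_{\tau})$, we have $W_{0}\subseteq W^{\circ}:=(X\times(\overline{G\times\mathcal{S}_{\tau}}))^{\circ}$, so $W^{\circ}$ is an irreducible open subvariety of $W$ and is therefore log symplectic with divisor $D_{\tau}^{\circ}:=D_{\tau}\cap W^{\circ}$. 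Put $Z:=(\overline{\mu}_{\tau}^{\circ})^{-1}(0)\subseteq W^{\circ}$ and let $q\colon Z\longrightarrow\overline{X}_{\tau}^{\circ}$ be the quotient map; by Proposition~\ref{Proposition: Poisson structure} the variety $Z$ is smooth, so Lemma~\ref{Lemma: FreeActionGbundle} makes $q$ a principal $G$-bundle. The commutative square \eqref{Equation: New Square} identifies $q(Z\cap W_{0})$ with $k_{\tau}(X_{\tau})$, and since $W^{\circ}=W_{0}\sqcup D_{\tau}^{\circ}$ is a $G$-invariant decomposition we obtain $\overline{X}_{\tau}^{\circ}=k_{\tau}(X_{\tau})\sqcup q(Z\cap D_{\tau}^{\circ})$.

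The first main step is to identify $k_{\tau}(X_{\tau})$ with the maximal-rank locus $R$ of the Poisson structure on $\overline{X}_{\tau}^{\circ}$. One inclusion is immediate: $X_{\tau}$ is symplectic by Corollary~\ref{Corollary: Log symplectic}(iii), $k_{\tau}$ is an open embedding of Poisson varieties by Proposition~\ref{Proposition: Open embedding}, and $\overline{X}_{\tau}^{\circ}$ is pure-dimensional (being smooth), so every point of $k_{\tau}(X_{\tau})$ has maximal Poisson rank. For the reverse inclusion, a point of $q(Z\cap D_{\tau}^{\circ})$ has the form $q(p)$ with $p\in D_{\tau}^{\circ}$; the symplectic leaf $L$ of $W$ through $p$ is preserved by the connected group $G$, and the $G$-action on $L$ is free and Hamiltonian over $W^{\circ}$, so the symplectic leaf of $\overline{X}_{\tau}^{\circ}$ through $q(p)$ is the reduction of $L$, of dimension $\dim L-2\dim G$; as $L\ne W_{0}$ forces $\dim L<\dim W=\dim W_{0}$, this is strictly less than $\dim W_{0}-2\dim G=\dim\overline{X}_{\tau}^{\circ}$, so $q(p)\notin R$. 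Hence $R=k_{\tau}(X_{\tau})$, and in particular $R\subseteq(\overline{X}_{\tau}^{\circ})_{\mathrm{irr}}$. Now $k_{\tau}(X_{\tau})$ is open in $\overline{X}_{\tau}^{\circ}$ and dense in the irreducible variety $(\overline{X}_{\tau}^{\circ})_{\mathrm{irr}}$, while $R$ is a union of open top-dimensional symplectic leaves; being a connected open subset of an irreducible variety, $R=k_{\tau}(X_{\tau})$ is a single symplectic leaf $L_{0}$. This $L_{0}$ is then the unique open dense symplectic leaf of $(\overline{X}_{\tau}^{\circ})_{\mathrm{irr}}$, and the vanishing locus of the top exterior power of the Poisson bivector on $(\overline{X}_{\tau}^{\circ})_{\mathrm{irr}}$ is exactly its complement $(\overline{X}_{\tau}^{\circ})_{\mathrm{irr}}\setminus L_{0}$. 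Statement~(ii) then follows, since $k_{\tau}$ is an open embedding with image $L_{0}$, hence an isomorphism of varieties onto $L_{0}$, and a Poisson morphism between symplectic varieties, hence a symplectomorphism.

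It remains to prove that $(\overline{X}_{\tau}^{\circ})_{\mathrm{irr}}\setminus L_{0}$ is a reduced normal crossing divisor, which gives~(i). The key geometric input is that $Z$ is transverse in $W^{\circ}$ to $D_{\tau}^{\circ}$ and to each of its closed strata. Writing $E_{1},\dots,E_{m}$ for the components of $\overline{G\times\mathcal{S}_{\tau}}\setminus\tilde{\varphi}(G\times\mathcal{S}_{\tau})$ and $E_{I}:=\bigcap_{i\in I}E_{i}$, such a stratum is $X\times E_{I}$ (intersected with $W^{\circ}$), which is a union of symplectic leaves of $W$ as in the proof of Proposition~\ref{Proposition: Poissonslicelogsymplectic}; freeness of the $G$-action on $W^{\circ}$ forces $\overline{\mu}_{\tau}^{\circ}$ to be submersive along each such leaf, hence along $X\times E_{I}$, so that $0$ is a regular value of $\overline{\mu}_{\tau}^{\circ}\vert_{X\times E_{I}}$ for every $I$. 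A smooth subvariety transverse to a normal crossing divisor and to all of its strata meets it in a reduced normal crossing divisor, so $Z\cap D_{\tau}^{\circ}$ is a $G$-invariant reduced normal crossing divisor in $Z$; since $q$ is a principal $G$-bundle, hence \'etale-locally the projection $U\times G\to U$, its image $q(Z\cap D_{\tau}^{\circ})=\overline{X}_{\tau}^{\circ}\setminus k_{\tau}(X_{\tau})$ is a reduced normal crossing divisor in $\overline{X}_{\tau}^{\circ}$, and restricting to the connected component $(\overline{X}_{\tau}^{\circ})_{\mathrm{irr}}$ completes the proof. (As a by-product, no component of the pure-dimensional variety $\overline{X}_{\tau}^{\circ}$ can lie inside this divisor, so in fact $\overline{X}_{\tau}^{\circ}=(\overline{X}_{\tau}^{\circ})_{\mathrm{irr}}$ is already irreducible.)

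I expect the main obstacle to be the transversality of $Z$ to the strata of $D_{\tau}^{\circ}$ and the attendant bookkeeping: one must check that each stratum $X\times E_{I}$ is genuinely a union of symplectic leaves of $W$, that freeness of the $G$-action upgrades to submersivity of the moment map along these strata, and that the normal crossing property survives both intersection with $Z$ and pushforward along the principal bundle $q$. The identification $R=k_{\tau}(X_{\tau})$ via the leafwise reduction-dimension count is the conceptual crux, and it rests on the standard Poisson-reduction fact that the leaves of $W^{\circ}\sslash G$ are the reductions of the $G$-invariant symplectic leaves of $W^{\circ}$ — a point one should confirm applies in this algebraic setting.
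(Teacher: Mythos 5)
Your argument is correct in substance, but it takes a genuinely different route from the paper's. The paper's proof isolates the unique irreducible component $Y$ of $(\overline{\mu}_{\tau}^{\circ})^{-1}(0)$ meeting $i_{\tau}(\mu_{\tau}^{-1}(0))$, shows by an irreducibility-plus-codimension argument that $Y\cap D_{\tau}$ is a divisor in $Y$, and then delegates the log symplectic assertion entirely to \cite[Proposition 3.6]{BalibanuUniversal}, which states that the free Hamiltonian reduction in this situation is log symplectic with divisor the image of $Y\cap D_{\tau}$; part (ii) is then read off from the complement description. You instead reprove that reduction statement directly, in two steps: the rank computation $\mathrm{rank}(P_{q(p)})=\dim L-2\dim G$ identifying $k_{\tau}(X_{\tau})$ with the maximal-rank locus, and the transversality of $(\overline{\mu}_{\tau}^{\circ})^{-1}(0)$ to the strata $X\times E_{I}$ followed by descent of the normal crossing property along the principal bundle $q$. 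Both steps check out. For the first, you do not actually need the full ``leaves of the reduction are reductions of leaves'' correspondence that you flag as a worry: the rank at $q(p)$ can be computed directly from the formula for $P_{X\sslash K}$ in Subsection \ref{Subsection: Hamiltonian reduction}, since the admissible covectors $\tilde{\alpha}$ sweep out $T_p(Gp)^{\dagger}$, whose image under $P_p$ is the $\omega_L$-orthogonal of $T_p(Gp)$ in $T_pL$, namely $\ker (d\overline{\mu}_{\tau})_p\cap T_pL$; freeness then gives dimension $\dim L-2\dim G$. For the second, the delicate points are exactly the ones you name: that each $X\times E_{I}$ is a union of leaves (the same fact used in the proof of Proposition \ref{Proposition: Poissonslicelogsymplectic}, applied branchwise on an \'etale cover where $D_{\tau}$ becomes simple normal crossing), and that the torsor $q$ is \'etale-locally trivial so that normal crossings descends. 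What your approach buys is a self-contained proof avoiding the external reference; what it costs is this bookkeeping. Note finally that your transversality argument proves strictly more than the theorem asserts: since $Z\cap D_{\tau}^{\circ}$ has codimension one in the pure-dimensional smooth variety $Z$, no component of $Z$ lies in $D_{\tau}^{\circ}$, every component meets the irreducible open set $Z\cap W_{0}\cong G\times X_{\tau}$, and hence $\overline{X}_{\tau}^{\circ}=(\overline{X}_{\tau}^{\circ})_{\mathrm{irr}}$ is irreducible --- a strengthening the paper deliberately does not claim, so it deserves an independent sanity check before you rely on it.
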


\begin{proof}
	Since $G$ acts freely on $(X\times (\overline{G\times\mathcal{S}_{\tau}}))^{\circ}$, the variety $(\overline{\mu}_\tau^{\circ})^{-1}(0)$ is smooth. The irreducible components of $(\overline{\mu}_\tau^{\circ})^{-1}(0)$ are therefore pairwise disjoint, while the connectedness of $G$ forces these components to be $G$-invariant. It follows that the irreducible components of $\overline{X}_\tau^{\circ}$ are precisely the images of the irreducible components of $(\overline{\mu}_\tau^{\circ})^{-1}(0)$ under the quotient map
	$$\overline{\theta}_{\tau}^{\circ}:(\overline{\mu}_L^{\circ})^{-1}(0)\longrightarrow\overline{X}_{\tau}^{\circ}.$$ This implies that $(\overline{X}_{\tau}^{\circ})_{\text{irr}}=\overline{\theta}_{\tau}^{\circ}(Y)$ for some unique irreducible component $Y\subseteq (\overline{\mu}_\tau^{\circ})^{-1}(0)$. 
	
	Now note that the image of \eqref{Equation: Restricted} lies in a unique irreducible component $Z$ of the smooth variety $(\overline{\mu}_L^{\circ})^{-1}(0)$, as $X_{\tau}$ and $\mu_{\tau}^{-1}(0)$ are irreducible. We also note that $\overline{\theta}_{\tau}^{\circ}(Z)$ contains the image of $k_{\tau}$, as follows from the commutativity of \eqref{Equation: Square}. We conclude that $\overline{\theta}_{\tau}^{\circ}(Z)=(\overline{X}^{\circ}_{\tau})_{\text{irr}}$, and the previous paragraph then implies that $Z=Y$.
	
	In light of the above, \eqref{Equation: Restricted} may be interpreted as an open embedding
	\begin{equation}\label{Equation: Map}i\big\vert_{\mu_\tau^{-1}(0)}:\mu_\tau^{-1}(0)\longrightarrow Y.\end{equation} The irreducibility of $Y$ forces the complement of the image to have positive codimension in $Y$. This complement is easily checked to be $Y\cap D_{\tau}$, where $D_{\tau}\subseteq X\times (\overline{G\times\mathcal{S}_{\tau}})$ is defined in \eqref{Equation: Complement}. We also observe that $Y\cap D_{\tau}$ has codimension at most one in $Y$, as $D_{\tau}$ is a divisor in $X\times (\overline{G\times\mathcal{S}_{\tau}})$. These last three sentences imply that $Y\cap D_{\tau}$ is a divisor in $Y$. By \cite[Proposition 3.6]{BalibanuUniversal}, the Poisson structure on $\overline{\theta}_{\tau}^{\circ}(Y)=(\overline{X}_{\tau}^{\circ})_{\text{irr}}$ is log symplectic with divisor $\overline{\theta}_{\tau}^{\circ}(Y\cap D_{\tau})$. This completes the proof of (i).
	
	Now consider the commutative diagram
	$$\begin{tikzcd}
	\mu_\tau^{-1}(0) \arrow[r, "i\big\vert_{\mu_\tau^{-1}(0)}"] \arrow[d]
	& Y \arrow[d] \\
	X_{\tau} \arrow[r, "k_\tau"]
	& (\overline{X}_{\tau}^{\circ})_{\text{irr}}
	\end{tikzcd},$$
	where the right vertical map is the restriction of $\overline{\theta}_{\tau}^{\circ}$. Since $Y\cap D_{\tau}$ is the complement of the image of \eqref{Equation: Map}, we deduce that the image of $k_\tau$ has a complement of $\overline{\theta}_{\tau}^{\circ}(Y\cap D_\tau)$. This amounts to the image of $k_{\tau}$ being the unique open dense symplectic leaf in $\overline{X}^{\circ}_{\text{irr}}$. Proposition \ref{Proposition: Open embedding} then implies that $k_\tau$ is a symplectomorphism onto this leaf. This establishes (ii), completing the proof.
\end{proof}

It is worth examining this result in the case $\tau=0$. To this end, recall the open embedding $k:X\longrightarrow\overline{X}^{\circ}$ from Subsection \ref{Subsection: Definition and first properties} and the fact that $k(X)\subseteq\overline{X}^{\circ}$ (see Proposition \ref{Proposition: Easy}). If $X$ is irreducible, then $k(X)$ lies in a unique irreducible component $\overline{X}^{\circ}_{\text{irr}}$ of $\overline{X}^{\circ}$. On the other hand, recall the $G$-actions on $\overline{X}$ and $\overline{X}^{\circ}$ discussed in Subsection \ref{Subsection: Poisson geometries tau}. Let us also recall the map $\overline{\nu}:\overline{X}\longrightarrow\g$ from \eqref{Equation: Formula1}.  

\begin{cor}\label{Corollary: LogSymplecticCompletion}
Let $(X,P,\nu)$ be an irreducible symplectic Hamiltonian $G$-variety. If $\overline{X}$ exists, then the following statements hold.
\begin{itemize}
\item[(i)] The Poisson variety $\overline{X}^{\circ}_{\emph{irr}}$ is log symplectic.
\item[(ii)] The $G$-action on $\overline{X}$ restricts to a Hamiltonian $G$-action on $\overline{X}^{\circ}_{\emph{irr}}$ with moment map
$$\overline{\nu}\bigg\vert_{\overline{X}^{\circ}_{\emph{irr}}}:\overline{X}^{\circ}_{\emph{irr}}\longrightarrow\g.$$
\item[(iii)] The morphism $k:X\longrightarrow\overline{X}$ is a $G$-equivariant symplectomorphism onto the unique open dense symplectic leaf in $\overline{X}^{\circ}_{\emph{irr}}$.
\item[(iv)] The symplectomorphism in $\mathrm{(iii)}$ is a embedding of Hamiltonian $G$-varieties. 
\end{itemize}
\end{cor}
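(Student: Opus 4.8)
The plan is to deduce this corollary as the $\tau=0$ specialization of Theorem~\ref{Theorem: nice}, supplemented by the equivariance and moment-map statements already recorded for $\overline{X}$ and $\overline{X}^{\circ}$ in Subsection~\ref{Subsection: Poisson geometries tau}. First I would spell out the relevant simplifications when $\tau=0$: one has $X_{\tau}=X$, $\overline{X}_{\tau}=\overline{X}$, $\overline{X}_{\tau}^{\circ}=\overline{X}^{\circ}$, $(\overline{X}_{\tau}^{\circ})_{\mathrm{irr}}=\overline{X}^{\circ}_{\mathrm{irr}}$, and $k_{\tau}=k$, while the quotient and moment maps $\overline{\theta}_{\tau}^{\circ}$, $\overline{\mu}_{\tau}$ reduce to $\overline{\pi}_L^{\circ}$, $\overline{\mu}_L$. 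Because $X$ is assumed irreducible, the hypothesis ``$X_{\tau}$ irreducible'' in Theorem~\ref{Theorem: nice} is automatic. Parts (i) and (iii) then follow at once from Theorem~\ref{Theorem: nice}(i) and (ii), the only addition in (iii) being the $G$-equivariance of $k$, which is exactly the content of Proposition~\ref{Proposition: Equivariant}.

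For (ii) I would first check that $\overline{X}^{\circ}_{\mathrm{irr}}$ is $G$-invariant. The variety $\overline{X}^{\circ}$ is smooth by Proposition~\ref{Proposition: Poisson structure}, so its irreducible components are pairwise disjoint open subsets; since $G$ is connected it must preserve each of them, and hence $\overline{X}^{\circ}_{\mathrm{irr}}$ is an open $G$-invariant Poisson subvariety. It then remains to restrict the Hamiltonian $G$-structure on $\overline{X}^{\circ}$, with moment map $\overline{\nu}^{\circ}$ (established in the proposition immediately preceding Proposition~\ref{Proposition: Easy}), to this component; the restricted action is still Hamiltonian, with moment map $\overline{\nu}^{\circ}\big\vert_{\overline{X}^{\circ}_{\mathrm{irr}}}=\overline{\nu}\big\vert_{\overline{X}^{\circ}_{\mathrm{irr}}}$, which gives (ii).

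Finally, (iv) amounts to assembling the pieces. Proposition~\ref{Proposition: Easy} already realizes $k$ as an open embedding of Hamiltonian $G$-varieties $X\hookrightarrow\overline{X}^{\circ}$, i.e. a $G$-equivariant open Poisson embedding intertwining $\nu$ with $\overline{\nu}^{\circ}$, the intertwining being the commutativity of \eqref{Equation: Triangle}. Since the image of $k$ lies in $\overline{X}^{\circ}_{\mathrm{irr}}$, one may corestrict and combine this with (ii) and (iii) to conclude that $k$ is an embedding of Hamiltonian $G$-varieties onto the unique open dense symplectic leaf of $\overline{X}^{\circ}_{\mathrm{irr}}$. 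I do not expect a genuine obstacle here; the only points needing care are the $G$-invariance of the component $\overline{X}^{\circ}_{\mathrm{irr}}$ and the bookkeeping that the $\tau=0$ instances of all the maps appearing in Subsection~\ref{Subsection: Poisson geometries tau} agree with those named in the statement of the corollary.
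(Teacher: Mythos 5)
Your proposal is correct and follows essentially the same route as the paper: specialize Theorem \ref{Theorem: nice} to $\tau=0$, invoke Propositions \ref{Proposition: Equivariant} and \ref{Proposition: Easy} for equivariance and the Hamiltonian embedding, and note that the connected group $G$ preserves the component $\overline{X}^{\circ}_{\mathrm{irr}}$. The only cosmetic difference is that the paper verifies the irreducibility input by exhibiting the isomorphism $X\times G\cong\mu_L^{-1}(0)$ (the form in which irreducibility is actually used inside the proof of Theorem \ref{Theorem: nice}), whereas you observe equivalently that $X_0=X$ is irreducible.
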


\begin{proof}
Note that $\mu_{\tau}=\mu_L$ if $\tau=0$, where $\mu_L:X\times T^*G\longrightarrow\g$ is the moment map for the Hamiltonian action of $G_L=G\times\{e\}\subseteq G\times G$ on $T^*G$. We also observe that the map
$$X\times G\longrightarrow\mu_L^{-1}(0),\quad (x,g)\longrightarrow (x,(g,\mathrm{Ad}_{g^{-1}}(\nu(x)))),\quad (x,g)\in X\times G$$ is a variety isomorphism. It follows that $\mu_{\tau}^{-1}(0)$ is irreducible if $\tau=0$. Theorem \ref{Theorem: nice} now implies that $\overline{X}^{\circ}_{\text{irr}}$ is log symplectic, and that $k:X\longrightarrow\overline{X}$ is a symplectomorphism onto the unique open dense symplectic leaf in $\overline{X}^{\circ}_{\text{irr}}$. One also knows that $k$ defines an embedding of Hamiltonian $G$-varieties $X\longrightarrow\overline{X}^{\circ}$ (see Proposition \ref{Proposition: Easy}), and that the $G$-action on $\overline{X}^{\circ}$ must preserve the component $\overline{X}^{\circ}_{\text{irr}}$. These last two sentences serve to verify (i)--(iv).
\end{proof}

\section{Examples}\label{Section: Examples}
We now illustrate some of our results in the context of concrete and familiar examples.   

\subsection{The existence of $\overline{X}_{\tau}$ and $\overline{X}$} Our first step is to find sufficient conditions for the existence of $\overline{X}_{\tau}$ and $\overline{X}$. To this end, let $(X,P,\nu)$ be a Hamiltonian $G$-variety. Consider the product ($G\times G$)-variety $X\times\overline{G}$, where the ($G\times G$)-action on $X$ is the one described in Subsection \ref{Subsection: MomentLogCotangent}. Let $\tau$ be an $\mathfrak{sl}_2$-triple in $\g$ and consider the following conditions:
\begin{itemize}
	\item[(I)] $X$ is a principal $G$-bundle;
	\item[(II)] $(X\times\overline{G})/G_L$ exists;
	\item[(III)] $\overline{X}$ exists;
	\item[(IV)] $\overline{X}_{\tau}$ exists.
\end{itemize}

\begin{lem}\label{Lemma: Implications}
	Let $(X,P,\nu)$ be a Hamiltonian $G$-variety, and suppose that $\tau$ is an $\mathfrak{sl}_2$-triple in $\g$. We then have the chain of implications $\mathrm{(I)}\Longrightarrow\mathrm{(II)}\Longrightarrow\mathrm{(III)}\Longrightarrow\mathrm{(IV)}$.
\end{lem}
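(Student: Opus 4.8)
The plan is to establish the three implications $\mathrm{(I)}\Rightarrow\mathrm{(II)}$, $\mathrm{(II)}\Rightarrow\mathrm{(III)}$ and $\mathrm{(III)}\Rightarrow\mathrm{(IV)}$ separately. The main tool for the last two is Proposition \ref{Proposition: Restriction of quotient}, which says that a geometric quotient restricts to a geometric quotient on any invariant locally closed subvariety; the first implication additionally uses the standard existence of associated fibre bundles. Here $G$ is connected and reductive (indeed semisimple), so the hypotheses of Proposition \ref{Proposition: Restriction of quotient} and Lemma \ref{Lemma: FreeActionGbundle} are met.

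\emph{$\mathrm{(I)}\Rightarrow\mathrm{(II)}$.} If $X$ is a principal $G$-bundle then $G_L=G\times\{e\}$ acts freely on $X$, hence freely on $X\times\overline{G}$ under $g\cdot(x,\gamma)=(g\cdot x,(g,e)\cdot\gamma)$. Since $\overline{G}$ is projective and its $G_L$-action is the restriction of a linear action on $\mathrm{Gr}(n,\g\oplus\g)$, the associated fibre bundle $(X\times\overline{G})/G_L\longrightarrow X/G$ exists as a geometric quotient by the theory of associated bundles with quasi-projective fibre (e.g.\ \cite{BrionHandbook}); alternatively, one forms the vector bundle $X\times_{G_L}(\g\oplus\g)\to X/G$, takes its relative Grassmannian $\mathrm{Gr}(n,X\times_{G_L}(\g\oplus\g))\to X/G$ — which is a geometric quotient of $X\times\mathrm{Gr}(n,\g\oplus\g)$ by $G_L$ — and applies Proposition \ref{Proposition: Restriction of quotient} to the $G_L$-invariant closed subvariety $X\times\overline{G}$. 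This gives $\mathrm{(II)}$.

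\emph{$\mathrm{(II)}\Rightarrow\mathrm{(III)}$.} Recall from Subsection \ref{Subsection: MomentLogCotangent} that $T^*\overline{G}(\log D)$ is the closed incidence subvariety $\{(\gamma,(y_1,y_2))\in\overline{G}\times(\g\oplus\g):(y_1,y_2)\in\gamma\}$, so $X\times T^*\overline{G}(\log D)$ is a $G_L$-invariant closed subvariety of $X\times\overline{G}\times(\g\oplus\g)$, and $\overline{\mu}_L^{-1}(0)$ is in turn $G_L$-invariant and closed in $X\times T^*\overline{G}(\log D)$ (as $\overline{\mu}_L$ is $G_L$-equivariant and $0$ is a coadjoint fixed point). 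Writing $q:X\times\overline{G}\to B:=(X\times\overline{G})/G_L$ for the quotient provided by $\mathrm{(II)}$, the plan is to descend the $G_L$-equivariant vector bundle $X\times\overline{G}\times(\g\oplus\g)\to X\times\overline{G}$ along $q$ to a vector bundle over $B$ — so that $X\times\overline{G}\times(\g\oplus\g)$ admits a geometric quotient by $G_L$ — and then to apply Proposition \ref{Proposition: Restriction of quotient} twice, first to $X\times T^*\overline{G}(\log D)$ and then to $\overline{\mu}_L^{-1}(0)$, obtaining $\overline{X}=\overline{\mu}_L^{-1}(0)/G_L$. The descent is legitimate once $q$ is known to be a principal $G_L$-bundle (Lemma \ref{Lemma: FreeActionGbundle}), so the point requiring care — and the main obstacle in the whole proof — is to see that the $G_L$-action on $X\times\overline{G}$ is free whenever $\mathrm{(II)}$ holds, rather than only under the stronger hypothesis $\mathrm{(I)}$. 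I would deduce this from the orbit geometry of $\overline{G}$: the $G_L$-stabiliser of any boundary point of $\overline{G}$ contains a conjugate of a Borel subgroup of $G$, every nontrivial element of $G$ lies in a Borel, and $G\subseteq\overline{G}$ is a dense $G_L$-orbit, so a failure of freeness on $X\times\overline{G}$ would force one on $X$ and hence a non-closed $G_L$-orbit in $X\times\overline{G}$, contradicting that $B$ is a geometric quotient.

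\emph{$\mathrm{(III)}\Rightarrow\mathrm{(IV)}$.} From the formulas for $\overline{\mu}_\tau$ and $\overline{\mu}_L$ together with $\overline{G\times\mathcal{S}_\tau}=\overline{\rho}_R^{-1}(\mathcal{S}_\tau)$ one obtains the identity $\overline{\mu}_\tau^{-1}(0)=\overline{\mu}_L^{-1}(0)\cap\big(X\times(\overline{G\times\mathcal{S}_\tau})\big)$. As $\overline{\rho}_R$ is $G_L$-invariant and $\mathcal{S}_\tau=\xi+\g_\eta$ is closed in $\g$, the subset $X\times(\overline{G\times\mathcal{S}_\tau})$ is $G_L$-invariant and closed in $X\times T^*\overline{G}(\log D)$, and the group $G$ used to form $\overline{X}_\tau$ acts as the restriction of $G_L$; hence $\overline{\mu}_\tau^{-1}(0)$ is a $G_L$-invariant closed subvariety of $\overline{\mu}_L^{-1}(0)$. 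Since $\overline{\mu}_L^{-1}(0)$ admits the geometric quotient $\overline{X}$ by $\mathrm{(III)}$, Proposition \ref{Proposition: Restriction of quotient} shows $\overline{\mu}_\tau^{-1}(0)\longrightarrow\overline{X}_\tau$ to be a geometric quotient, so $\mathrm{(IV)}$ holds.
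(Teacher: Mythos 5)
Your implications $\mathrm{(I)}\Rightarrow\mathrm{(II)}$ and $\mathrm{(III)}\Rightarrow\mathrm{(IV)}$ are correct and essentially identical to the paper's: the first via the $G_L$-quasi-projectivity of $\overline{G}$ (Pl\"ucker embedding) together with \cite{BrionHandbook}, the second via the identity $\overline{\mu}_\tau^{-1}(0)=\overline{\mu}_L^{-1}(0)\cap\big(X\times(\overline{G\times\mathcal{S}_\tau})\big)$ and Proposition \ref{Proposition: Restriction of quotient}.

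The step $\mathrm{(II)}\Rightarrow\mathrm{(III)}$ has a genuine gap. Your strategy needs $q:X\times\overline{G}\longrightarrow B$ to be a principal $G_L$-bundle so that the equivariant vector bundle $X\times\overline{G}\times(\g\oplus\g)$ descends, and you try to extract freeness of the $G_L$-action on $X\times\overline{G}$ from $\mathrm{(II)}$ alone. That deduction fails: a geometric quotient does not require a free action, only that each fibre be a single orbit, so ``non-free $\Rightarrow$ some non-closed orbit'' is not a valid inference. Concretely, for $X=T^*(G/H)$ with $H\subseteq G$ a nontrivial finite subgroup, $(X\times\overline{G})/G_L$ exists as a geometric quotient (all orbits are closed because $G\to G/H$ is finite, and the quotient can be assembled by first quotienting $G\times\g^*\times\overline{G}$ by $G_L$ and then by the residual finite $H$-action), yet the $G_L$-action on $X\times\overline{G}$ is not free: any nontrivial $h\in H$ lies in a Borel subgroup, which by Borel's fixed point theorem fixes some $\gamma$ in the projective variety $\overline{G}$, so $h$ stabilizes a point of $X\times\overline{G}$. (Your auxiliary claim that every boundary point of $\overline{G}$ has $G_L$-stabilizer containing a Borel is also false off the closed orbit, where the stabilizers are of the form $Z(L_I)U_{P_I}$, but that is not where the argument breaks.) The paper avoids descending a nontrivial bundle altogether: on $\overline{\mu}_L^{-1}(0)$ one has $y_1=\nu(x)$, so $(x,(\gamma,(y_1,y_2)))\mapsto(x,\gamma,y_2)$ is a $G_L$-equivariant closed immersion into $X\times\overline{G}\times\g$ with $G_L$ acting \emph{trivially} on the last factor; hence $(X\times\overline{G}\times\g)/G_L=((X\times\overline{G})/G_L)\times\g$ exists by $\mathrm{(II)}$ with no freeness required, and Proposition \ref{Proposition: Restriction of quotient} finishes the argument. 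You should replace your descent step with this observation.
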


\begin{proof}
	Assume that (I) holds. To verify (II), we consider Proposition 3.3.2 and Corollary 3.3.3 from \cite{BrionHandbook}. These results reduce us to proving that $\overline{G}$ is $G_L$-quasi-projective, i.e. that there exist a finite-dimensional $G_L$-module $V$ and a $G_L$-equivariant locally closed immersion $\overline{G}\longrightarrow\mathbb{P}(V)$. We first observe that the Pl\"ucker embedding
	$$\mathrm{Gr}(n,\g\oplus\g)\longrightarrow\mathbb{P}(\Lambda^n(\g\oplus\g))$$ is ($G\times G$)-equivariant. Its restriction to $\overline{G}$ is therefore a ($G\times G$)-equivariant closed immersion
	$$\overline{G}\longrightarrow \mathbb{P}(\Lambda^n(\g\oplus\g)).$$ It follows that $\overline{G}$ is indeed $G_L$-quasi-projective, verifying (II).
	
	Now assume that (II) is true, and note that $$\overline{\mu}_L^{-1}(0)=\bigg\{(x,(\gamma,(y_1,y_2)))\in X\times (\overline{G}\times(\mathfrak{g}\oplus\mathfrak{g})):y_1=\nu(x)\text{ and }(\nu(x),y_2)\in\gamma\bigg\}.$$ One readily deduces that
	$$\overline{\mu}_L^{-1}(0)\longrightarrow X\times\overline{G}\times\mathfrak{g},\quad (x,(\gamma,(y_1,y_2)))\longrightarrow (x,\gamma,y_2)$$ is a $G_L$-equivariant closed immersion, where $X\times\overline{G}\times\mathfrak{g}$ is regarded as the product of the $G_L$-variety $X\times\overline{G}$ and the $G_L$-variety $\g$ with trivial action. We also know that $X\times\overline{G}\times\g$ has a geometric quotient by $G_L$, as (II) is assumed to be true. These last two sentences combine with Proposition \ref{Proposition: Restriction of quotient} and imply that $\overline{\mu}_L^{-1}(0)$ has a geometric quotient by $G_L$, i.e. (III) is true.
	
	Now assume that (III) holds, and consider the geometric quotient map
	$$\overline{\pi}_L:\overline{\mu}_L^{-1}(0)\longrightarrow \overline{X}.$$ Let us also observe that the $G$-action on $X\times (\overline{G\times\mathcal{S}})$ comes from restricting the $G_L$-action on $X\times T^*\overline{G}(\log D)$. Proposition \ref{Proposition: Restriction of quotient} then implies that $\overline{\pi}_L$ restricts to a geometric quotient
	$$(X\times(\overline{G\times\mathcal{S}}))\cap\overline{\mu}_L^{-1}(0)\longrightarrow\overline{\pi}_L((X\times(\overline{G\times\mathcal{S}}))\cap\overline{\mu}_L^{-1}(0))$$ of $(X\times(\overline{G\times\mathcal{S}}))\cap\overline{\mu}_L^{-1}(0)$ by $G$. On the other hand, $(X\times(\overline{G\times\mathcal{S}}))\cap\overline{\mu}_L^{-1}(0)$ is precisely the fibre of $\overline{\mu}_{\tau}:X\times(\overline{G\times\mathcal{S}})\longrightarrow\g$ over $0$. These last two sentences imply that $\overline{\mu}_{\tau}^{-1}(0)$ has a geometric quotient by $G$, i.e. (IV) holds. 
\end{proof}

This lemma turns out to yield a large class of Hamiltonian $G$-varieties $X$ such that $\overline{X}_{\tau}$ exists for all $\mathfrak{sl}_2$-triples $\tau$. To obtain this class, let $Y$ be a smooth $G$-variety. The $G$-action on $Y$ has a canonical lift to a Hamiltonian $G$-action on $T^*Y$, and there is a canonical moment map. This leads to the following result.

\begin{cor}\label{Corollary: Useful}
Let $Y$ be an irreducible smooth principal $G$-bundle and set $X=T^*Y$. Suppose that $\tau$ is an $\mathfrak{sl}_2$-triple in $\g$. The following statements hold.
\begin{itemize}
\item[(i)] The variety $\overline{X}_{\tau}$ exists and is a smooth Poisson variety.
\item[(ii)] If $\overline{X}_{\tau}$ is irreducible, then it is log symplectic and $k_{\tau}:X_{\tau}\longrightarrow\overline{X}_{\tau}$ is a symplectomorphism onto the unique open dense symplectic leaf in $\overline{X}_{\tau}$.   
\end{itemize}
\end{cor}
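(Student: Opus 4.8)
The plan is to reduce everything to machinery already in place, the only genuinely new input being that $X=T^*Y$ is a principal $G$-bundle. First I would argue this as follows. The $G$-action on $Y$ is free, so the induced action on the total space $T^*Y$ is free as well. The cotangent sheaf $\Omega_Y$ carries a canonical $G$-equivariant structure, and since $\pi\colon Y\longrightarrow Y/G$ is a $G$-torsor, faithfully flat descent identifies $G$-equivariant vector bundles on $Y$ with vector bundles on $Y/G$; let $\mathcal{E}$ be the vector bundle on $Y/G$ obtained by descending $\Omega_Y$. Then $T^*Y\cong Y\times_{Y/G}\mathrm{Tot}(\mathcal{E})$ $G$-equivariantly (with $G$ acting only on the $Y$-factor), so the projection $T^*Y\longrightarrow\mathrm{Tot}(\mathcal{E})$ is the base change of $\pi$ along $\mathrm{Tot}(\mathcal{E})\longrightarrow Y/G$ and is therefore a principal $G$-bundle; in particular $(T^*Y)/G=\mathrm{Tot}(\mathcal{E})$ exists as a geometric quotient. (Equivalently, one may work over affine opens of $Y/G$, where the situation becomes a free $G$-action with closed orbits on a smooth affine variety—whose GIT quotient is then geometric—and glue, invoking Lemma \ref{Lemma: FreeActionGbundle}.) Thus condition (I) of Lemma \ref{Lemma: Implications} holds for $X=T^*Y$, and that lemma yields condition (IV): the quotient $\overline{X}_{\tau}$ exists.

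For the remaining claims in (i), note that since $G$ acts freely on $X=T^*Y$ it acts freely on $X\times(\overline{G\times\mathcal{S}_{\tau}})$, so every point of the latter has trivial $G$-stabilizer and $(X\times(\overline{G\times\mathcal{S}_{\tau}}))^{\circ}=X\times(\overline{G\times\mathcal{S}_{\tau}})$. Consequently $\overline{X}_{\tau}^{\circ}=\overline{X}_{\tau}$, and Proposition \ref{Proposition: Poisson structure} shows this variety to be smooth and Poisson. This proves (i).

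For (ii), assume $\overline{X}_{\tau}$ is irreducible. The variety $X=T^*Y$ is smooth, irreducible (a vector bundle over the irreducible $Y$), and symplectic for the canonical cotangent form, with the canonical Hamiltonian $G$-action and moment map $\nu$. Because the $G$-action on $Y$ is free, $\nu\colon T^*Y\longrightarrow\g$ is fibrewise surjective, hence surjective, so $X_{\tau}=\nu^{-1}(\mathcal{S}_{\tau})\neq\emptyset$. Since $k_{\tau}\colon X_{\tau}\longrightarrow\overline{X}_{\tau}$ is an open embedding (Proposition \ref{Proposition: Equivariant open}) with nonempty image, $X_{\tau}$ is a nonempty open subvariety of the irreducible variety $\overline{X}_{\tau}$ and is therefore irreducible. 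All hypotheses of Theorem \ref{Theorem: nice} are now satisfied, and because $\overline{X}_{\tau}^{\circ}=\overline{X}_{\tau}$ is irreducible we have $(\overline{X}_{\tau}^{\circ})_{\mathrm{irr}}=\overline{X}_{\tau}$; Theorem \ref{Theorem: nice} then gives that $\overline{X}_{\tau}$ is log symplectic and that $k_{\tau}$ is a symplectomorphism onto the unique open dense symplectic leaf in $\overline{X}_{\tau}$, completing the proof.

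The main obstacle is the very first step—checking that $X=T^*Y$ is a principal $G$-bundle, i.e. that the geometric quotient $(T^*Y)/G$ exists. Freeness of the action is immediate, but producing the quotient requires either the descent argument for the equivariant cotangent bundle along $Y\longrightarrow Y/G$ sketched above or a local GIT computation together with a gluing step. Once this is in hand, the rest is a matter of combining Lemma \ref{Lemma: Implications}, Proposition \ref{Proposition: Poisson structure}, and Theorem \ref{Theorem: nice}.
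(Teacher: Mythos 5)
Your proposal is correct and takes essentially the same route as the paper: you establish that $X=T^*Y$ is a principal $G$-bundle (the paper gets the good quotient by citing Drezet--Narasimhan and then invokes Lemma \ref{Lemma: FreeActionGbundle}, while you spell out the underlying descent of the equivariant cotangent bundle along $Y\longrightarrow Y/G$), feed this into Lemma \ref{Lemma: Implications} for existence, use freeness of the action to identify $\overline{X}_{\tau}$ with $\overline{X}_{\tau}^{\circ}$ and apply Proposition \ref{Proposition: Poisson structure} for part (i), and apply Theorem \ref{Theorem: nice} for part (ii). The one place you go beyond the paper is in explicitly checking the hypothesis of Theorem \ref{Theorem: nice} that $X_{\tau}$ is irreducible (via surjectivity of $\nu$ and openness of $k_{\tau}$), a point the paper leaves implicit; this is a welcome addition rather than a deviation.
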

\begin{proof}
We begin by establishing that $\overline{X}_{\tau}$ exists. By Lemmas \ref{Lemma: FreeActionGbundle} and \ref{Lemma: Implications}, it suffices to show that $G$-action on $X$ admits a good quotient. Since $G$ acts freely on $Y$, this is implied by \cite[Theorem 2.3]{DrezetNarasimhan} and the remark that follows it.

Now observe that the $G$-action on $X=T^*Y$ is also free, implying that $\overline{X}_{\tau}=\overline{X}_{\tau}^{\circ}$. Proposition \ref{Proposition: Poisson structure} then tells us that $\overline{X}_{\tau}$ is smooth and Poisson. This completes the proof of (i). The assertion (ii) follows immediately from Theorem \ref{Theorem: nice}.
\end{proof}

\subsection{The main examples}
We now discuss some of the examples that motivate and best exhibit the results in this paper. Our first two examples satisfy the hypotheses of Corollary \ref{Corollary: Useful}, while the third example has a different nature. 

\begin{ex}
Suppose that $Y=G$ is endowed with the $G$-action defined by
$$g\cdot h:=hg^{-1},\quad g,h\in G.$$ The induced Hamiltonian $G$-action on $X=T^*Y=T^*G$ then satisfies
$$X_{\tau}\cong G\times\mathcal{S}_{\tau}\quad\text{and}\quad\overline{X}_{\tau}=(T^*G\times(\overline{G\times\mathcal{S}_{\tau}}))\sslash G\cong\overline{G\times\mathcal{S}_{\tau}}$$
for any $\mathfrak{sl}_2$-triple $\tau$ in $\g$. The fibrewise compactification in Theorem \ref{Theorem: Compact} becomes the one mentioned in Remark \ref{Remark: Projective fibres}.
\end{ex}

\begin{ex}
A mild generalization of the previous example can be obtained as follows. Suppose that $G$ is a closed subgroup of a connected linear algebraic group $Y=H$ with Lie algebra $\h$. Note that $G$ then acts on $Y$ via by the formula
$$g\cdot h:=hg^{-1},\quad g\in G,\text{ }h\in H.$$ The cotangent bundle $X=T^*Y=T^*H$ is thereby a Hamiltonian $G$-variety, and the left trivialization gives an identification $X\cong H\times\mathfrak{h}^*$. The moment map is given by 
$$H\times\h^*\to \g^*,\qquad (h,\alpha)\mapsto -\alpha\vert_{\g}.$$ 
One finds that $X_\tau \cong H\times((\mathcal S_\tau) \times \g^\dagger)$ under appropriate identifications, where $\g^{\dagger}$ denotes the annihilator of $\g$ in $\h^*$. By Corollary \ref{Corollary: Useful}, $\overline{X}_\tau$ exists for all $\mathfrak{sl}_2$-triples $\tau$ in $\g$.
\end{ex}

\begin{ex}
Let $\tau$ be a principal $\mathfrak{sl}_2$-triple in $\g$ and recall the notation used in Subsection \ref{Subsection: KW}. Consider the Hamiltonian $G$-varieties $X=G\times\mathcal{S}_{\tau}$ and $\overline{G\times\mathcal{S}_{\tau}}$, as well as the moment maps
$$\rho_{\tau}:G\times\mathcal{S}_{\tau}\longrightarrow\g\quad\text{and}\quad\overline{\rho}_{\tau}:\overline{G\times\mathcal{S}_{\tau}}\longrightarrow\g.$$ The discussion of $\mathcal{Z}_{\g}^{\tau}$ and $\overline{\mathcal{Z}_{\g}^{\tau}}$ in Subsection \ref{Subsection: Relation} combines with Proposition \ref{Proposition: PoissonSlice} to imply that
$$X_{\tau}=\rho_{\tau}^{-1}(\mathcal{S}_{\tau})=\mathcal{Z}_{\g}^{\tau}\quad\text{and}\quad\overline{X}_{\tau}=((G\times\mathcal{S}_{\tau})\times(\overline{G\times\mathcal{S}_{\tau}}))\sslash G\cong\overline{\rho}_{\tau}^{-1}(\mathcal{S}_{\tau})=\overline{\mathcal{Z}_{\g}^{\tau}}.$$ The fibrewise compactification in Theorem \ref{Theorem: Compact} becomes B\u{a}libanu's fibrewise compactification \eqref{Equation: Non-equivariant diagram}.
\end{ex}

\section*{Notation}
\begin{itemize}
\item $\mathcal{O}_Y$ --- structure sheaf of an algebraic variety $Y$
\item $\mathbb{C}[Y]$ --- coordinate ring of an algebraic variety $Y$
\item $G$ --- complex semisimple linear algebraic group
\item $G_L$ --- the subgroup $G\times\{e\}\subseteq G\times G$
\item $G_R$ --- the subgroup $\{e\}\times G\subseteq G\times G$
\item $\g$ --- Lie algebra of $G$
\item $\mathrm{Ad}:G\longrightarrow\operatorname{GL}(\g)$ --- adjoint representation
\item $\g_\Delta$ --- diagonal in $\g\oplus\g$
\item $n$ --- dimension of $\g$
\item $\langle\cdot,\cdot\rangle$ --- Killing form on $\g$
\item $\tau$ --- $\mathfrak{sl}_2$-triple in $\g$
\item $\mathcal S_\tau$ --- Slodowy slice associated to $\tau$.
\item $\chi:\g\longrightarrow\mathrm{Spec}(\mathbb{C}[\g]^G)$ --- adjoint quotient 
\item $y_{\tau}$ --- unique point at which $\mathcal{S}_{\tau}$ meets $\chi^{-1}(\chi(y))$, if $\tau$ is a principal $\mathfrak{sl}_2$-triple
\item $\rho=(\rho_L,\rho_R):T^*G\longrightarrow\g\oplus\g$ --- moment map for the ($G\times G$)-action on $T^*G$
\item $\rho_{\tau}:G\times\mathcal{S}_{\tau}\longrightarrow\g$ --- moment map for the $G$-action on $G\times\mathcal{S}_{\tau}$
\item $X$ --- Hamiltonian $G$-variety
\item $\nu:X\longrightarrow\g$ --- moment map for the $G$-action on $X$
\item $X_{\tau}$ --- the Poisson slice $\nu^{-1}(\mathcal{S}_{\tau})$
\item $X/G$ --- geometric quotient of $X$ by $G$
\item $\mu=(\mu_L,\mu_R):X\times T^*G\longrightarrow\g\oplus\g$ --- moment map for the ($G\times G$)-action on $X\times T^*G$
\item $\mu_{\tau}:X\times(G\times\mathcal{S}_{\tau})\longrightarrow\g$ moment map for the $G$-action on $X\times(G\times\mathcal{S}_{\tau})$
\item $\psi_{\tau}:X_{\tau}\longrightarrow (X\times (G\times\mathcal{S}_{\tau}))\sslash G$ --- canonical Poisson variety isomorphism
\item $\overline{G}$ --- De Concini--Procesi wonderful compactification of $G$
\item $D$ --- the divisor $\overline{G}\setminus G$
\item $T^*\overline{G}(\log(D))$ --- log cotangent bundle of $(\overline{G},D)$
\item $\overline{\rho}=(\overline{\rho}_L,\overline{\rho}_R):T^*\overline{G}(\log(D))\longrightarrow\g\oplus\g$ --- moment map for the ($G\times G$)-action on $T^*\overline{G}(\log(D))$
\item $\overline{G\times\mathcal{S}_{\tau}}$ --- the Poisson slice $\overline{\rho}_R^{-1}(\mathcal{S}_{\tau})$
\item $\mathcal{Z}_{\mathfrak{g}}^{\tau}$ --- universal centralizer of $\g$
\item $\overline{\mathcal{Z}_{\g}^{\tau}}$ --- B\u{a}libanu's partial compactification of $\mathcal{Z}_{\g}^{\tau}$
\item $\overline{\rho}_{\tau}:\overline{G\times\mathcal{S}_{\tau}}\longrightarrow\g$ --- moment map for the $G$-action on $\overline{G\times\mathcal{S}_{\tau}}$
\item $\overline{\mu}=(\overline{\mu}_L,\overline{\mu}_R):X\times T^*\overline{G}(\log D)\longrightarrow\g\oplus\g$ --- moment map for the ($G\times G$)-action on $X\times T^*\overline{G}(\log D)$
\item $\overline{\mu}_{\tau}:X\times (\overline{G\times\mathcal{S}_{\tau}})\longrightarrow\g$ --- moment map for the $G$-action on $X\times (\overline{G\times\mathcal{S}_{\tau}})$
\item $\overline{X}$ --- the Hamiltonian reduction $(X\times T^*\overline{G}(\log D))\sslash G_L$
\item $k:X\longrightarrow\overline{X}$ --- canonical $G$-equivariant open embedding
\item $\overline{\nu}:\overline{X}\longrightarrow\g$ --- equivariant extension of $\nu$ to $\overline{X}$
\item $\overline{X}_{\tau}$ --- the Hamiltonian reduction $(X\times (\overline{G\times\mathcal{S}_{\tau}}))\sslash G$
\item $k_{\tau}:X_{\tau}\longrightarrow\overline{X}_{\tau}$ --- canonical open embedding
\item $(X\times(\overline{G\times\mathcal{S}_{\tau}}))^{\circ}$ --- set of points in $X\times (\overline{G\times\mathcal{S}_{\tau}})$ with trivial $G$-stabilizers
\item $\overline{X}_{\tau}^{\circ}$ --- the Hamiltonian reduction $(X\times(\overline{G\times\mathcal{S}_{\tau}}))^{\circ}\sslash G$ 
\end{itemize}

\bibliographystyle{acm} 
\bibliography{Extension}
\end{document}